\newtheorem{theorem}{Theorem}[section]
\newtheorem{lemma}[theorem]{Lemma}
\newtheorem{proposition}[theorem]{Proposition}
\newtheorem{definition}[theorem]{Definition}
\newtheorem*{theorem*}{Theorem}
\newtheorem*{lemma*}{Lemma}
\newtheorem*{remark*}{Remark}
\newtheorem*{definition*}{Definition}
\newtheorem*{proposition*}{Proposition}
\newtheorem*{corollary*}{Corollary}
\numberwithin{equation}{section}
\newcommand{\real}{\mathbb{R}}
\let\ced=\c         
\def\qed{\,\unskip\kern 6pt \penalty 500
\raise -2pt\hbox{\vrule \vbox to8pt{\hrule width 6pt
\vfill\hrule}\vrule}\par}
\definecolor{darkblue}{rgb}{0.05, .05, .65}
\definecolor{darkgreen}{rgb}{0.1, .65, .1}
\definecolor{darkred}{rgb}{0.8,0,0}
\newcommand{\beqn}{\begin{equation}}
\newcommand{\eeqn}{\end{equation}}
\newcommand{\bear}{\begin{eqnarray}}
\newcommand{\eear}{\end{eqnarray}}
\newcommand{\bean}{\begin{eqnarray*}}
\newcommand{\eean}{\end{eqnarray*}}
\begin{document}

\title{\huge \bf Traveling wave solutions for the generalized Burgers-Fisher equation}

\author{
\Large Razvan Gabriel Iagar\,\footnote{Departamento de Matem\'{a}tica
Aplicada, Ciencia e Ingenieria de los Materiales y Tecnologia
Electr\'onica, Universidad Rey Juan Carlos, M\'{o}stoles,
28933, Madrid, Spain, \textit{e-mail:} razvan.iagar@urjc.es},\\
[4pt] \Large Ariel S\'{a}nchez,\footnote{Departamento de Matem\'{a}tica
Aplicada, Ciencia e Ingenieria de los Materiales y Tecnologia
Electr\'onica, Universidad Rey Juan Carlos, M\'{o}stoles,
28933, Madrid, Spain, \textit{e-mail:} ariel.sanchez@urjc.es}\\
[4pt] }
\date{}
\maketitle

\begin{abstract}
Traveling wave solutions, in the form $u(x,t)=f(x+ct)$, to the generalized Burgers-Fisher equation
$$
\partial_tu=u_{xx}+k(u^n)_x+u^p-u^q, \quad (x,t)\in\real\times(0,\infty),
$$
with $n\geq2$, $p>q\geq1$ and $k>0$, are classified with respect to their speed $c\in(-\infty,\infty)$ and the behavior at $\pm\infty$. The existence and uniqueness of traveling waves with any speed $c\in\real$ is established and their behavior as $x\to\pm\infty$ is described. In particular, it is shown that there exists a unique $c^*\in(0,\infty)$ such that there exists a unique soliton $f^*$ with speed $c^*$ and such that
$$
\lim\limits_{\xi\to-\infty}f^*(\xi)=\lim\limits_{\xi\to\infty}f^*(\xi)=0, \quad \xi=x+ct.
$$
Moreover, if $n<p+q+1$ then $c^*<kn$ and if $n>p+q+1$ then $c^*>kn$. For $c<\min\{c^*,kn\}$, any traveling wave with speed $c$ satisfies
$\lim\limits_{\xi\to-\infty}f(\xi)=0$ and $\lim\limits_{\xi\to\infty}f(\xi)=1$, while for $c>\max\{c^*,kn\}$ any traveling wave with speed $c$ satisfies $\lim\limits_{\xi\to-\infty}f(\xi)=1$ and $\lim\limits_{\xi\to\infty}f(\xi)=0$. In particular, for any speed $c\in(0,c^*)$, there are traveling wave solutions $u$ with speed $c$ such that $u(x,t)\to1$ as $t\to\infty$, in contrast to the non-convective case $k=0$.
\end{abstract}

\

\noindent {\bf Mathematics Subject Classification 2020:} 35A24, 35C07, 35K58, 35Q51, 37C29, 37G15.

\smallskip

\noindent {\bf Keywords and phrases:} generalized Burgers-Fisher equation, traveling waves, Hopf bifurcation, unique limit cycle, critical speed.

\section{Introduction}

Starting from the seminal papers by Fisher \cite{Fi37} and Kolmogorov, Petrovsky and Piscounoff \cite{KPP37} deriving them from mathematical biology, equations of Fisher-KPP type, that is,
\begin{equation}\label{KPP}
\partial_tu=u_{xx}+u^p-u^q,
\end{equation}
became an important category of partial differential equations, both due to their applications and to their very rich mathematical theory, stemming from the competition between the reaction and the absorption terms (together with the effect of the diffusion). One of the most significant mathematical features of equations such as Eq. \eqref{KPP} is the availability of solutions taking the geometric form of a wave (or front) advancing or retracting with a constant speed $c\in\real$, called \emph{traveling wave solutions}, having the general form
\begin{equation}\label{TWS}
u(x,t)=f(x\pm ct), \quad c>0,
\end{equation}
where $f$ is the profile of the wave. Solutions corresponding to $c=0$ are a particular case and will be called stationary waves in this context.

Since the models proposed in \cite{Fi37} and \cite{KPP37} led to \eqref{KPP} with $p=1$, $q=2$ (or, more general, $q>1$), the name Fisher-KPP refers usually in literature to equations of the form \eqref{KPP} (or generalizations of them) with $1\leq p<q$. Nowadays, this case is rather well understood, new applications have been proposed and, from the mathematical point of view, it has been noticed that the dynamics of the solutions is well represented by solutions in form of traveling waves \eqref{TWS}, see for example the monographs and papers \cite{AW75, GK, SGM94, Ma10, DQZ20} and references therein. In particular, a famous result (see for example the classical papers \cite{Fi37, AW75, SGM94} for a deduction of $c^*$ in different cases) is the existence of a critical speed $c^*>0$ such that traveling waves only exist if the speed is $c\geq c^*$ and do not exist when $c\in(0,c^*)$. The profile of these waves satisfies
$$
\lim\limits_{\xi\to-\infty}f(\xi)=0, \quad \lim\limits_{\xi\to\infty}f(\xi)=1,
$$
and very important asymptotic stability results have been established (for more general equations of the same form): general solutions (under suitable conditions on their initial data) approach, in the limit as $t\to\infty$, geometric patterns similar to these traveling waves, see for example \cite{Biro02, DK12, DGQ20, DQZ20, G20} (and references therein).

The opposite order of exponents, that is, $p>q>0$, has been considered with less interest in literature, due to an initial lack of real world applications. However, general models including Eq. \eqref{KPP} with the latter order of exponents have been more recently proposed in applied sciences, for example, in growth and diffusion models \cite{Ba94} (see also \cite{Ma10}). From the mathematical point of view, it has been observed that stationary waves (that is, $c=0$) are in this case separatrices between either finite time extinction and finite time blow-up if $p>1>q>0$ \cite{HBS14} or between decay as $t\to\infty$ with algebraic time scales and finite time blow-up if $p>q\geq1$ \cite{IS25}; that is, stationary solutions are unstable if $p>q$ and, in fact, solutions with initial conditions lying strictly below them decay in time and vanish (either in finite or infinite time), while solutions stemming from initial conditions lying above these stationary waves blow up in a finite time. Existence and non-existence of traveling waves for the case of degenerate diffusion (that is, $(u^m)_{xx}$ replacing $u_{xx}$ in \eqref{KPP}) with $p>q$ is studied in \cite{SHB05}.

In this paper, we consider the following generalized Burgers-Fisher equation
\begin{equation}\label{eq1}
\partial_tu=u_{xx}+k(u^n)_x+u^p-u^q, \quad (x,t)\in\real\times(0,\infty),
\end{equation}
in the range of exponents
\begin{equation}\label{range.exp}
n\geq2, \quad p>q\geq1, \quad k>0.
\end{equation}
We observe that Eq. \eqref{eq1} differs from Eq. \eqref{KPP} by a convection term of Burgers type, justifying the terminology of generalized Burgers-Fisher equation. Let us stress here that, in fact, our study allows for considering the more general equation
\begin{equation}\label{eq1.gen}
\partial_tv=Av_{xx}+B(v^n)_x+Cv^p-Dv^q,
\end{equation}
posed for $(x,t)\in\real\times(0,\infty)$, with coefficients $A$, $B$, $C$, $D>0$. A simple rescaling
$$
v(x,t)=\lambda u(\mu x,\nu t), \quad \lambda=\left(\frac{D}{C}\right)^{1/(p-q)}, \quad \nu=C^{(1-q)/(p-q)}D^{(p-1)/(p-q)}, \quad \mu=\sqrt{\frac{\nu}{A}}
$$
maps Eq. \eqref{eq1.gen} into Eq. \eqref{eq1}, with
$$
k=\frac{B\lambda^{n-1}\mu}{\nu}=\frac{B}{\sqrt{A}}C^{(q+1-2n)/2(p-q)}D^{-(p+1-2n)/2(p-q)}>0.
$$
Finally, note that our results can be derived as well for convection coefficients $k<0$, since Eq. \eqref{eq1} with $k>0$ is mapped into the same Eq. \eqref{eq1} with a coefficient $-k$ by the simple transformation $x=-y\in\real$.

We are interested in giving a complete classification of traveling wave solutions to Eq. \eqref{eq1}. For simplicity, instead of \eqref{TWS}, we unify the notation by letting
\begin{equation}\label{TW}
u(x,t)=f(\xi)>0, \quad \xi=x+ct\in\real, \quad c\in\real.
\end{equation}
A direct calculation shows that the profile $f$ solves the differential equation
\begin{equation}\label{TWODE}
f''(\xi)=cf'(\xi)-knf^{n-1}(\xi)f'(\xi)-f^p(\xi)+f^q(\xi).
\end{equation}
Let us observe that a first effect of the convection term is a lack of symmetry of \eqref{TWODE} with respect to the similar equation of the traveling wave profiles for Eq. \eqref{KPP}. Indeed, if the convection term $-knf^{n-1}f'$ is removed, the remaining equation is invariant to the change $c\mapsto-c$, $\xi\mapsto-\xi$, which allows to employ easily the usual definition \eqref{TWS} of the traveling waves: for any $c\geq0$, the traveling waves of Eq. \eqref{KPP} come in pairs, one with speed $c$ and one with speed $-c$ and with the same profile. In contrast to this, the convection term destroys this symmetry and thus a definition of traveling waves with speed $c$ ranging in $(-\infty,\infty)$ is easier to handle.

Eq. \eqref{eq1} has been considered recently in a number of works \cite{LH24, PS25, Zhang21, Zhang22}, for different cases of exponents $n$, $p$, $q$ (see also references therein). In all these works, having as common point a technique based on an abelian integral, the main idea is to transform Eq. \eqref{eq1} into an autonomous dynamical system which is seen as a small perturbation of a Hamiltonian system. Since the phase portrait of the Hamiltonian system is well known, the authors therein study how, for small perturbation coefficients, the system bifurcates and thus obtain limit cycles or homoclinic and heteroclinic orbits. However, their study only deals (in most of the cases) with natural exponents and with a number of restrictions on them.

In this paper, we focus on the order of exponents which is a priori less interesting from the point of view of applications, but which happens to be much more interesting from the mathematical point of view. Indeed, as we shall see, in the most standard order $p<q$, the convection term does not bring too much novelty with respect to the existence or structure of the traveling wave solutions, except from changing the critical speed $c^*$, and the classification of the traveling waves follows as in, for example, \cite{MM02, MM03, MaOu21, PPS25}. In stark contrast to this picture, in the range \eqref{range.exp} of exponents, the convection term introduces some rather striking changes. We are in a position to state our main results and explain these changes in detail in the next paragraph.

\bigskip

\noindent \textbf{Main results.} As previously explained, our main goal is to classify the traveling wave solutions \eqref{TW} to Eq. \eqref{eq1}, that is, profiles $f$ solving (in classical sense) the differential equation \eqref{TWODE} with respect to their speed $c$ and their behavior as $\xi\to\pm\infty$. This classification will depend on the sign of the difference $n-p-q-1$, a number which is related to a Hopf bifurcation taking place at the speed $c=kn$, when the parameter $c$ varies in $(-\infty,\infty)$. We first state a general result (independent of the bifurcation) of existence and uniqueness of a special traveling wave.
\begin{theorem}\label{th.1}
Let $n$, $p$, $q$ as in \eqref{range.exp}. Then, there exists a unique parameter $c^*\in(0,\infty)$ such that, when $c=c^*$, there is a unique profile $f$ (where uniqueness holds true up to a space translation) solving \eqref{TWODE} and such that
\begin{equation}\label{behcstar}
\lim\limits_{\xi\to-\infty}f(\xi)=\lim\limits_{\xi\to\infty}f(\xi)=0.
\end{equation}
Moreover,
$$
0<c^*<kn+2\sqrt{p-q}
$$
and $c^*>kn$ if $n>p+q+1$, respectively $c^*<kn$ if $n<p+q+1$.
\end{theorem}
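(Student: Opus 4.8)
\noindent The plan is to turn \eqref{TWODE} into a planar autonomous system and read off a wave satisfying \eqref{behcstar} as a homoclinic orbit. Setting $g=f'$ gives $f'=g$, $g'=(c-knf^{n-1})g-f^p+f^q$, and passing to the Li\'enard variable $y=g+F(f)$ with $F(f)=kf^n-cf$ one gets $f'=y-F(f)$, $y'=-(f^p-f^q)$. In $\{f\ge 0\}$ there are exactly two equilibria, $O=(0,0)$ and $P$, the latter at $f=1$; the eigenvalues at $P$ are $\tfrac12\big((c-kn)\pm\sqrt{(c-kn)^2-4(p-q)}\big)$, so $P$ is attracting for $c<kn$, repelling for $c>kn$, and a linear center at $c=kn$, while $O$ is a hyperbolic saddle if $q=1$ and a degenerate saddle--node if $q>1$, which I would desingularize by a directional blow-up. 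In every case there is a unique orbit $\Gamma^u_c$ leaving $O$ into $\{f>0\}$ and a unique orbit $\Gamma^s_c$ entering $O$ from $\{f>0\}$, and a profile obeying \eqref{behcstar} is exactly a homoclinic loop $\Gamma^u_c=\Gamma^s_c$. In particular the uniqueness of the profile at a given speed is automatic, so the whole statement reduces to locating the speeds that admit such a loop.

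Two soft ingredients drive the argument. First, multiplying \eqref{TWODE} by $f'$ yields $E'=(c-knf^{n-1})(f')^2$ for $E:=\tfrac12(f')^2+\tfrac{f^{p+1}}{p+1}-\tfrac{f^{q+1}}{q+1}$; since a homoclinic forces $E(\pm\infty)=0$, we must have $\int_{\real}(c-knf^{n-1})(f')^2\,d\xi=0$, impossible when $c\le 0$ because the integrand is then $<0$ wherever $f'\ne 0$. Hence $c^*>0$. Second, writing $\Gamma^u_c$ and $\Gamma^s_c$ locally as graphs $g=G(f;c)$ one has $\frac{dg}{df}=(c-knf^{n-1})+\frac{f^q-f^p}{g}$, strictly increasing in $c$; so two such graphs with $c_1<c_2$ cannot cross, which forces $\Gamma^u_{c_2}$ strictly above $\Gamma^u_{c_1}$ in $\{g>0\}$ and $\Gamma^s_{c_2}$ strictly above $\Gamma^s_{c_1}$ in $\{g<0\}$, wherever defined. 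If homoclinic loops existed for two distinct speeds, these inclusions would force the two Jordan loops to cross -- impossible for orbits -- so the homoclinic speed is unique. For existence I would shoot in $c$: for $c\le 0$ the orbit $\Gamma^u_c$ is a heteroclinic to $P$ with $f\nearrow 1$, while for $c$ large it meets $\{f=0,\ g<0\}$ transversally (an ``overshoot'', which is not a genuine wave); both are open conditions, monotone in $c$ by the above, and the transition occurs at a single $c=c^*$ where $\Gamma^u_{c^*}$ is neither, hence $\Gamma^u_{c^*}=\Gamma^s_{c^*}$.

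It remains to locate $c^*$. The bound $c^*<kn+2\sqrt{p-q}$ I would obtain by showing that once $c\ge kn+2\sqrt{p-q}$ the equilibrium $P$ becomes an (improper) node, and that in that range there exists a heteroclinic connection $P\to O$ (built by a sub/supersolution bracketing, since the departure from the node $P$ is monotone); this connection \emph{is} the orbit $\Gamma^s_c$, which therefore cannot close into a loop, so no homoclinic exists there. The comparison with $kn$ is governed by the nature of $P$ precisely at $c=kn$, where it is a weak focus: computing its first focal (Lyapunov) quantity gives $\ell_1=\kappa\,(p+q+1-n)$ with $\kappa>0$ (roughly, the only surviving cubic interaction weighs the quadratic term $kf^n$ of the convection against the quadratic part of $f^p-f^q$). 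Thus $P$ is an attracting weak focus when $n>p+q+1$: the resulting supercritical Hopf bifurcation at $c=kn$ throws off a small limit cycle for $c$ slightly above $kn$, which grows with $c$ and degenerates into the homoclinic loop at some $c^*>kn$; when $n<p+q+1$, $P$ is a repelling weak focus, $\Gamma^u_{kn}$ cannot spiral into it and must overshoot, so the homoclinic has already occurred at some $c^*<kn$ (the subcritical Hopf gives an unstable cycle for $c<kn$ that grows to the loop as $c$ decreases).

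The step I expect to be the real obstacle is the presence of these limit cycles. For $n>p+q+1$, at speeds $c$ between $kn$ and $c^*$ the orbit $\Gamma^u_c$ is trapped spiralling onto the cycle and is neither an undershoot nor an overshoot, so the soft shooting above no longer pins down $c^*$ by itself; one genuinely needs (i) the focal-value computation to start the Hopf branch, (ii) a uniqueness criterion for limit cycles of the Li\'enard system with $F(f)=kf^n-cf$ and nonlinearity $f^p-f^q$, verifying the monotonicity hypotheses it requires, and (iii) a monotone-growth statement for that cycle guaranteeing that it degenerates into the homoclinic loop at exactly one speed. The blow-up analysis at $O$ for $q>1$ and the sub/supersolution construction behind the upper bound are laborious but essentially routine.
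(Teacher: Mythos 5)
Your overall architecture coincides with the paper's: the same Li\'enard reduction, the same energy identity $\int_{\real}(c-knf^{n-1})(f')^2\,d\xi=0$ to force $c^*>0$, the same monotonicity of the orbit graphs in $c$ (which is exactly the paper's Propositions \ref{prop.monot1}--\ref{prop.monot0} and yields uniqueness of the homoclinic speed just as you argue), a shooting/intermediate-value argument for existence, the node structure of $P_2$ for $c\geq kn+2\sqrt{p-q}$ to get the upper bound, and the Lyapunov quantity $\ell_1=\kappa(p+q+1-n)$ (matching the paper's $\sigma=-\tfrac{3kn(n-1)\pi}{4\sqrt{p-q}}(n-p-q-1)$) to decide the side of $kn$. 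Your sign conventions for the sub/supercritical Hopf agree with the paper's.

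The genuine gap is the one you yourself flag and then defer: controlling limit cycles in the strip between $kn$ and $kn+2\sqrt{p-q}$. This is not a side issue but the technical heart of the theorem. In the paper, the statement $c^*<kn$ for $n<p+q+1$ is \emph{not} obtained by continuing the Hopf branch; it is obtained by proving that for every $c\in[kn,kn+2\sqrt{p-q})$ the system has \emph{no} limit cycle at all (Lemma \ref{lem.nocycle2}), which via Poincar\'e--Bendixson forces $1<X_0(c)<X_1(c)$ throughout $c\geq kn$ (Lemma \ref{lem.finit}) and hence pushes the crossing $X_1(c^*)=X_0(c^*)$ strictly below $kn$. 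That non-existence rests on Lemma \ref{lem.nocycle}: the system $y^n-x^n=\tfrac{c}{k}(y-x)$, $\tfrac{y^{p+1}-x^{p+1}}{p+1}=\tfrac{y^{q+1}-x^{q+1}}{q+1}$ has no solution with $x\in(0,1)$, $y>1$ when $c\geq kn$ and $n\leq p+q+1$ --- a delicate four-step comparison of implicit functions in which the threshold $p+q+1$ reappears through the inequality $p+q+1>\bigl(\tfrac{p+1}{q+1}\bigr)^{(p+q)/(p-q)}$. Without something of this strength, your shooting is not well defined on $[kn,kn+2\sqrt{p-q})$: the orbit $\Gamma^u_c$ could in principle spiral onto a cycle there, and your dichotomy ``heteroclinic to $P$ versus overshoot'' fails to be exhaustive precisely where you need to conclude $X_1>X_0$. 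Your proposed substitutes --- a Li\'enard uniqueness criterion plus monotone growth of the Hopf cycle to the homoclinic --- would require comparable work (the paper does carry out the rotated-vector-field continuation, but only later, in Section 8, for Theorem \ref{th.2}; for Theorem \ref{th.1} it deliberately avoids it). Two smaller deferred items: the continuity of the shooting functions $c\mapsto X_1(c),X_0(c)$ (the paper's Lemma \ref{lem.cd}, needed for your ``open conditions'' claim), and the blow-up analysis at the origin for $q>1$ guaranteeing the uniqueness of the entering/exiting orbits when $c=0$; both are routine but not free.
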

The existence of such a wave is a phenomenon specific to the range $p>q$, in the sense that the standard Fisher-KPP equation does not admit such a solution, having zero as limit at both ends. Indeed, its existence is related to the formation of a homoclinic orbit in an associated dynamical system, which is due to a Hopf bifurcation occurring at speed $c=kn$ in the same system (see Section \ref{sec.syst} below). The reason for such a bifurcation is strongly related to the order relation $p>q$ between the reaction and absorption exponents and it does not appear in the opposite case. Moreover, the number $p+q+1$ is, to the best of our knowledge, new and it comes from the Lyapunov number of the above mentioned Hopf bifurcation. As a simple particular case, for $k=0$ (that is, when the convection is removed), $c^*=0$ and the corresponding stationary wave coincides in fact with the semi-explicit stationary solution obtained by the authors in the recent paper \cite{IS25}. In the proof of Theorem \ref{th.1}, the local behavior near $\pm\infty$ of the wave will be made more precise than in \eqref{behcstar} and the exact rates of decay to zero will be obtained, showing that, interestingly, at one end the tail is algebraic and at the other one the tail is a decreasing exponential.

Our next result provides a full classification of the traveling waves with any velocity $c\in\real$, and depends partially on the sign of the number $n-p-q-1$.
\begin{theorem}\label{th.2}
Let $n$, $p$, $q$ satisfy \eqref{range.exp} and let $c\in\real$. Then:
\begin{enumerate}[(a)]
  \item If $q>1$ and $n<p+q+1$, for any $c\geq kn$, there exists a unique (up to a space translation) traveling wave with speed of propagation equal to $c$. Its profile has the following behavior:
\begin{equation}\label{behcbig}
\lim\limits_{\xi\to-\infty}f(\xi)=1, \quad \lim\limits_{\xi\to\infty}\xi^{1/(q-1)}f(\xi)=\left(\frac{c}{q-1}\right)^{1/(q-1)}.
\end{equation}
Moreover, the profile $f$ is non-increasing if $c\geq kn+2\sqrt{p-q}$ and it presents damped oscillations around the constant equilibrium state $f\equiv1$ as $\xi\to-\infty$ if $c\in[kn,kn+2\sqrt{p-q})$.
\item If $q>1$ and $n<p+q+1$, for any $c\in(c^*,kn)$, there exists a unique periodic traveling wave solution with velocity $c$ and presenting infinitely many oscillations around the constant equilibrium state $f\equiv1$. Moreover, there is a unique traveling wave with velocity $c$ presenting infinitely many non-damped oscillations around the equilibrium state $f\equiv 1$ as $\xi\to-\infty$ and decaying to zero as $\xi\to\infty$, in the same form as in \eqref{behcbig}.
\item If $q>1$ and $n<p+q+1$, for any $c\in(-\infty,c^*)$ there exists a unique (up to a space translation) traveling wave with speed of propagation equal to $c$. Its profile has the following behavior:
\begin{equation}\label{behcsmall}
\lim\limits_{\xi\to-\infty}f(\xi)=0, \quad \lim\limits_{\xi\to\infty}f(\xi)=1,
\end{equation}
and it is non-decreasing if $c\in(-\infty,-2\sqrt{p-q})$.
\item If $q>1$ and $n>p+q+1$, the results are similar as in the previous cases, with the difference that the first item holds true for $c\in(c^*,\infty)$, the second item for $c\in(kn,c^*)$ and the third item for $c\in(-\infty,kn)$.
\item If $q=1$, all the previous results hold true, with the only difference that the local behavior as $\xi\to\infty$ in \eqref{behcbig} is replaced by
\begin{equation}\label{behclarge.bis}
\lim\limits_{\xi\to\infty}e^{\lambda\xi}f(\xi)\in(0,\infty), \quad \lambda=\frac{\sqrt{c^2+4}-c}{2}.
\end{equation}
\end{enumerate}
\end{theorem}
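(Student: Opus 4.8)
The plan is to rewrite the profile equation \eqref{TWODE} as the planar autonomous system
\[
f' = g, \qquad g' = (c-knf^{n-1})\,g - f^p + f^q ,
\]
that is, as a Li\'enard-type equation $f'' + (knf^{n-1}-c)f' + (f^p-f^q)=0$, and to read off the whole classification from the global phase portrait in the half-plane $\{f>0\}$, which has exactly the two finite equilibria $P_0=(0,0)$ and $P_1=(1,0)$. First I would record the local structure. At $P_1$ the eigenvalues are $\lambda_\pm=\tfrac12\left((c-kn)\pm\sqrt{(c-kn)^2-4(p-q)}\right)$, with $\lambda_+\lambda_-=p-q>0$, so $P_1$ is a stable node for $c\le kn-2\sqrt{p-q}$, a stable focus for $kn-2\sqrt{p-q}<c<kn$, a linear centre at $c=kn$, an unstable focus for $kn<c<kn+2\sqrt{p-q}$ and an unstable node for $c\ge kn+2\sqrt{p-q}$; this already accounts for the monotone-versus-oscillatory dichotomies in the statement. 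At $P_0$, for $q>1$, the system is non-hyperbolic (eigenvalues $0$ and $c$): using the centre-manifold / blow-up analysis carried out in the proof of Theorem \ref{th.1}, one sees that the unique orbit \emph{entering} $P_0$ inside $\{f>0\}$ does so along the centre manifold $g\sim -f^q/c$, whence the algebraic tail $f(\xi)\sim\left(c/((q-1)\xi)\right)^{1/(q-1)}$ of \eqref{behcbig}, while (for $c>0$) the orbit \emph{leaving} $P_0$ into $\{f>0\}$ along the $\lambda=c$ eigendirection decays exponentially as $\xi\to-\infty$; when $q=1$, $P_0$ is a genuine hyperbolic saddle and its stable separatrix produces the exponential rate \eqref{behclarge.bis}.

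Next I would analyse the bifurcation at $c=kn$, where the linearization at $P_1$ has eigenvalues $\pm i\sqrt{p-q}$ and $\tfrac{\rd}{\rd c}\mathrm{Re}\,\lambda_\pm|_{c=kn}=\tfrac12>0$, so a Hopf bifurcation occurs. After translating $f\mapsto f-1$ and bringing the linear part to the standard rotation normal form, a computation with the Guckenheimer--Holmes formula gives the first Lyapunov coefficient $\ell_1=\tfrac{kn(n-1)}{16}(p+q+1-n)$, the identity $p(p-1)-q(q-1)=(p-q)(p+q-1)$ being responsible for this clean form; this is where the threshold $n=p+q+1$ enters. Hence for $n<p+q+1$ the bifurcation is subcritical and a \emph{unique} small unstable limit cycle surrounds $P_1$ for $c$ just below $kn$, while for $n>p+q+1$ it is supercritical and a \emph{unique} small stable limit cycle surrounds $P_1$ for $c$ just above $kn$. (The borderline $n=p+q+1$, outside the scope of the statement, would force $c^*=kn$ and require a higher-order Lyapunov quantity.)

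The global connecting orbits are then assembled from trapping regions in the strip $\{0<f<1\}$ bounded by suitable straight lines and parabolas through $P_0$ and $P_1$, together with a priori bounds excluding escape to $f=+\infty$, the Poincar\'e--Bendixson theorem, and the homoclinic loop at $P_0$ provided by Theorem \ref{th.1} at $c=c^*$, with $c^*<kn$ when $n<p+q+1$ and $c^*>kn$ when $n>p+q+1$. The skeleton of the argument is to follow the unique orbit $W^s(P_0)$ entering $P_0$ (the centre-manifold orbit for $q>1$, or the stable manifold for $q=1$) and the unique orbit $W^u(P_0)$ leaving $P_0$ into $\{f>0\}$, and to identify their $\alpha$- and $\omega$-limit sets as $c$ varies. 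When $P_1$ is a source --- $c\ge kn$ in cases (a)--(c), $c>c^*$ in case (d) --- the trapping wedge forces $\alpha(W^s(P_0))=P_1$ and confines $W^s(P_0)$ to $\{0<f<1,\,g<0\}$, so that $f$ is strictly decreasing (the monotone profile of item (a) when $P_1$ is a node) or, when $P_1$ is a focus, $W^s(P_0)$ spirals into $P_1$ with damped oscillations about $f\equiv1$ as $\xi\to-\infty$; the tails then come from the local analysis. For $c$ in the window $(c^*,kn)$ (resp.\ $(kn,c^*)$ in case (d)), the limit cycle $\Gamma$ persists as the unique periodic orbit, and Poincar\'e--Bendixson in the annulus bounded by $\Gamma$ and the outer trapping curve forces the extra connection of item (b): when $\Gamma$ is unstable ($n<p+q+1$) it is $W^s(P_0)$ that winds onto $\Gamma$ as $\xi\to-\infty$ and enters $P_0$ algebraically as $\xi\to+\infty$, whereas when $\Gamma$ is stable ($n>p+q+1$) the two ends are interchanged and it is $W^u(P_0)$ that winds onto $\Gamma$. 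Finally, for $c<c^*$ (resp.\ $c<kn$ in case (d)) there is neither a periodic orbit nor a homoclinic loop, so $\omega(W^u(P_0))$ can only be $P_1$: this is the heteroclinic $P_0\to P_1$ of item (c), monotone whenever $P_1$ is a stable node (in particular for $c<-2\sqrt{p-q}$, as $kn>0$).

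Uniqueness up to translation of the monotone fronts is automatic, since in items (a) and (c) the connection is a one-dimensional (un)stable or centre-manifold branch of a (semi-)hyperbolic equilibrium. The step I expect to be the real obstacle is to prove that $\Gamma$ is the \emph{only} periodic orbit for \emph{every} $c$ in the whole window $(c^*,kn)$ (resp.\ $(kn,c^*)$) --- not merely near $c=kn$ --- and that this one-parameter family of cycles depends monotonically on $c$ and converges to the homoclinic loop precisely as $c\to c^*$. For this I would pass to Li\'enard coordinates $\dot x=y-F(x)$, $\dot y=-\psi(1+x)$ with $x=f-1$, $F(x)=\int_0^x\!\big(kn(1+s)^{n-1}-c\big)\,\rd s$ and $\psi(1+x)=(1+x)^p-(1+x)^q$, observe that $x\,\psi(1+x)>0$ for $x\in(-1,\infty)\setminus\{0\}$ and that $F'$ changes sign exactly once on $(-1,\infty)$ when $0<c<kn$, and invoke a classical uniqueness-of-limit-cycle theorem for Li\'enard systems (of Zhang Zhifen / Xiao--Zhang type), together with a rotated-vector-field argument for the monotonicity in $c$ and a continuity/compactness argument ruling out that the cycle stays bounded away from $P_0$ as $c\to c^*$. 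Granted this, the assertions about infinitely many non-damped oscillations follow at once from the presence of the (un)stable cycle around the focus $P_1$, and the case $q=1$ is handled in exactly the same way, the only change being that $P_0$ is a genuine saddle and the loop at $c=c^*$ is a saddle homoclinic loop --- which only makes the exclusion arguments easier.
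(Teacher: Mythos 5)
Your plan follows essentially the same route as the paper: the same phase--plane system with equilibria at $(0,0)$ and $(1,0)$ (note the paper labels these $P_1$ and $P_2$, the reverse of your $P_0$, $P_1$), the same centre-manifold computation at the origin giving the algebraic tail $f\sim (c/(q-1)\xi)^{1/(q-1)}$, the same Hopf bifurcation at $c=kn$ with a Lyapunov quantity whose sign is that of $p+q+1-n$ (the paper's $\sigma=-\tfrac{3kn(n-1)\pi}{4\sqrt{p-q}}(n-p-q-1)$ agrees in sign with your $\ell_1$), trapping regions plus Poincar\'e--Bendixson for the connections, Li\'enard-system criteria for limit cycles, and rotated vector fields (Han/Duff/Perko) for the monotone cycle family terminating at the homoclinic loop of Theorem \ref{th.1}.

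There is, however, one concrete gap. For item (a) in the focus range $c\in[kn,kn+2\sqrt{p-q})$ you assert that the orbit entering the origin ``spirals into'' the unstable focus $(1,0)$ backward in time. Since that focus is a source, the trapping region must contain it, and Poincar\'e--Bendixson then only tells you that the $\alpha$-limit set is either the focus or a periodic orbit surrounding it; you must therefore \emph{exclude periodic orbits for every $c\geq kn$}, not merely establish uniqueness of the cycle in the window $(c^*,kn)$. Your Zhang-Zhifen-type uniqueness theorem is invoked only for $0<c<kn$ and in any case yields ``at most one'' cycle, never ``none''; and for a subcritical Hopf bifurcation no small-amplitude cycle exists for $c>kn$, but a large-amplitude one is not a priori ruled out. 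The paper devotes its Section 5 to exactly this point: a Li\'enard non-existence criterion (the system \eqref{interm11}, i.e.\ $F(u)=F(v)$, $G(u)=G(v)$, must have no solution with $u\in(-1,0)$, $v>0$) reduces the question to the auxiliary algebraic system \eqref{basic.syst}, whose non-solvability for $c\geq kn$ and $n\leq p+q+1$ is the content of the rather delicate Lemma \ref{lem.nocycle} (the comparison $y_1(x)>y_2(x)$ of two implicitly defined curves, proved by monotonicity in $n$ and in $c$). This is arguably the hardest computation in the proof of Theorem \ref{th.2}, it is precisely where the threshold $n\leq p+q+1$ enters a second time (independently of the Lyapunov number), and your proposal as written does not supply a substitute for it.
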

In fact, we also deduce the precise behavior as $\xi\to-\infty$ in \eqref{behcsmall}, but due to the different number of cases depending on $c$, we refrain from giving it in the statement of Theorem \ref{th.2}. This local behavior can be found in the statements of Propositions \ref{prop.P1}, \ref{prop.P1c0} and \ref{prop.P1q1} in Section \ref{sec.syst}.

\medskip

\noindent \textbf{Remark.} The limiting case $n=1$, when the convection term is linear, is mapped by the change of variable
$$
v(y,t)=u(x-kt,t), \quad y=x-kt,
$$
into the non-convective equation \eqref{KPP} in terms of $v$. According to the recent paper \cite{IS25}, the soliton is a stationary solution in terms of $v$ and we deduce that $c^*=k$ after undoing the previous change of variable. Moreover, the traveling wave profiles to Eq. \eqref{KPP} have been classified in \cite[Theorem 1]{SHB05} (with $-c$ instead of $c$ in the notation therein). By undoing again the previous change of variable and thus adding $k$ to the speeds appearing in \cite[Theorem 1]{SHB05}, we infer from the latter reference that the results in Theorem \ref{th.1} and \ref{th.2} (parts (a) and (c)) hold true also for $n=1$, with $c^*=k$.

\medskip

\noindent \textbf{Effects of the convection term.} Let us notice first that the existence of traveling wave solutions with any velocity $c\in\real$ is in contrast to similar results on the traveling wave solutions to the standard Fisher-KPP equation \cite{Fi37, KPP37} or even more general models with degenerate or doubly degenerate diffusion \cite{MM02, MM03}, where the existence of traveling waves is limited to a minimal speed (which has been even explicitly deduced in some cases). This very general existence is due to the effect of both the convection term and the opposite order of exponents $p>q$ in our case. Moreover, let us stress here the existence of a periodic traveling wave, oscillating infinitely many times around the equilibrium value one, which is the novel effect of the convection; indeed, the presence of the convection term produces a Hopf bifurcation and thus a limit cycle stemming from it, which is equivalent, in terms of profiles, to this periodic wave. Finally, another important remark is the fact that there exist (according to the third item in Theorem \ref{th.2}) traveling waves connecting the equilibrium levels $f\equiv0$ as $\xi\to-\infty$ to $f\equiv1$ as $\xi\to\infty$ moving both forward and backward (depending on the sign of their speed $c<c^*$). In particular, since $c^*>0$, for any $c\in(0,c^*)$ there are traveling wave solutions $u(x,t)=f(x+ct)$ such that
$$
\lim\limits_{t\to\infty}u(x,t)=1,
$$
with locally uniform convergence. This fact might be of a great importance with respect to the stability of the traveling wave solutions. This situation is completely due to the convection term, since, for $k=0$, we have $c^*=0$, as established in \cite{IS25}, and the change of direction of advance comes together with the change of monotonicity of the profile.

\medskip

The proof of the main results, including more precise tails as $\xi\to\infty$ or $\xi\to-\infty$ in the cases when such limits are equal to zero, follows from a transformation, introduced in Section \ref{sec.syst}, of the differential equation \eqref{TWODE} to an autonomous dynamical system. This is why, we perform throughout the paper a careful analysis of a phase plane associated to the autonomous dynamical system, being as sharp as possible in estimating the ranges of existence of every type of profiles. A part of the points in the analysis will be only sketched, as at some technical points we borrow ideas from the recent paper \cite{ILS25}. We are now ready to pass to the proof of the main theorems, which will be split into several sections and completed with a general recap of all ranges of $c$ at the end of the paper.

\section{An alternative formulation. Local analysis}\label{sec.syst}

As commented in the Introduction, we transform Eq. \eqref{TWODE} into an autonomous dynamical system by letting
\begin{equation}\label{PSchange}
X(\xi)=f(\xi)>0, \quad Y(\xi)=f'(\xi)\in\real, \quad \xi\in\real.
\end{equation}
Eq. \eqref{TWODE} is thus transformed into the following autonomous first order system of class $C^1$ if $q>1$,
\begin{equation}\label{PSsyst}
\left\{\begin{array}{ll}X'=Y, \\Y'=cY-knX_{+}^{n-1}Y-X_{+}^p+X_{+}^q,\end{array}\right.
\end{equation}
where $X_+=\max\{X,0\}$ represents the positive part. If $q=1$, the term $X_{+}^q$ in the second equation of \eqref{PSsyst} is replaced by $X$. Since we are looking for non-negative traveling waves, we are only interested in working with the region $X\geq0$ of the system, but the above definition with the extension by zero allows to apply results in dynamical systems that require the system to be defined and of class $C^1$ in $\real^2$. The system \eqref{PSsyst} has two finite critical points, namely $P_1=(0,0)$ and $P_2=(1,0)$. We next analyze locally the flow of the trajectories of the system in a neighborhood of these critical points. For the first of them, the local analysis depends strongly on the sign of $c$. Let us assume first that $q>1$ and analyze the critical point $P_1$ when $c\neq0$.
\begin{proposition}\label{prop.P1}
The critical point $P_1$ is a non-hyperbolic critical point.

\medskip

(a) If $c>0$, then in the half-plane $(0,\infty)\times\real$ the critical point $P_1$ has a one-dimensional unstable manifold containing a unique trajectory tangent to the vector $e_2=(1,c)$ and a unique center manifold tangent to the line $\{Y=0\}$ with stable direction of the flow. In terms of traveling wave profiles, the unique trajectory contained in the unstable manifold of $P_1$ contains a one-parameter family of profiles such that
\begin{equation}\label{beh.P1u}
\lim\limits_{\xi\to-\infty}e^{c\xi}f(\xi)=L\in(0,\infty),
\end{equation}
while the unique trajectory contained in the center manifold corresponds to profiles such that
\begin{equation}\label{beh.P1c}
\lim\limits_{\xi\to\infty}\xi^{1/(q-1)}f(\xi)=\left(\frac{c}{q-1}\right)^{1/(q-1)}.
\end{equation}

\medskip

(b) If $c<0$, then in the half-plane $(0,\infty)\times\real$ the critical point $P_1$ has a one-dimensional stable manifold containing a unique trajectory tangent to the vector $e_2=(1,c)$ and a unique center manifold tangent to the line $\{Y=0\}$ with unstable direction of the flow. In terms of traveling wave profiles, the unique trajectory contained in the stable manifold of $P_1$ contains a one-parameter family of profiles satisfying
\begin{equation}\label{beh.P1s}
\lim\limits_{\xi\to\infty}e^{c\xi}f(\xi)=L\in(0,\infty),
\end{equation}
while the unique trajectory contained in the center manifold corresponds to profiles satisfying
\begin{equation}\label{beh.P1cbis}
\lim\limits_{\xi\to-\infty}|\xi|^{1/(q-1)}f(\xi)=\left(\frac{|c|}{q-1}\right)^{1/(q-1)}.
\end{equation}
\end{proposition}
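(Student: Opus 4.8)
The plan is to extract the whole statement from the linearization of the system \eqref{PSsyst} at $P_1=(0,0)$ together with the invariant manifold machinery. First I would compute the Jacobian $J$ of the right-hand side of \eqref{PSsyst} at $P_1$. Since $n\geq 2$ and $p>q>1$, each of the terms $knX_+^{n-1}Y$, $X_+^p$ and $X_+^q$ vanishes together with its first-order partial derivatives at the origin, so $J=\begin{pmatrix}0&1\\0&c\end{pmatrix}$, with eigenvalues $0$ and $c$, eigenvector $(1,0)$ for the eigenvalue $0$ and eigenvector $e_2=(1,c)$ for the eigenvalue $c$. This already gives that $P_1$ is non-hyperbolic and fixes the tangent directions appearing in the statement: the line $\{Y=0\}$ is the center subspace and $\mathrm{span}\,e_2$ the hyperbolic one.

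For $c>0$ I would then invoke the unstable manifold theorem and the center manifold theorem for the $C^1$ system \eqref{PSsyst} (the $C^1$ regularity being exactly what was recorded right after \eqref{PSsyst}). The positive eigenvalue $c$ yields a one-dimensional local unstable manifold $W^u$ tangent to $\mathrm{span}\,e_2$; its branch entering $\{X>0\}$ consists of a single orbit, and along it, as $\xi\to-\infty$, the linearized flow gives $(X,Y)\sim L\,e^{c\xi}\,e_2$, so $f=X$ decays like a pure exponential of rate $c$, which is \eqref{beh.P1u}; the time-translates of this orbit furnish the claimed one-parameter family of profiles (with $L$ ranging over $(0,\infty)$). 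For the center manifold I would write it locally as a graph $Y=h(X)$, $h(0)=h'(0)=0$, and substitute into the invariance identity $h'(X)h(X)=c\,h(X)-knX^{n-1}h(X)-X^p+X^q$. Because $q>1$ forces $2q-1>q$, the term $h'h$ is of strictly higher order; because $p>q$ and $n\geq 2$, the terms $X^p$ and $X^{n-1}h$ are negligible against $X^q$; hence the leading balance is $c\,h(X)+X^q=o(X^q)$, i.e. $h(X)=-X^q/c+\text{h.o.t.}$ Since $c>0$ this is negative for small $X>0$, so on the center manifold $X'=h(X)$ decreases $X$ monotonically to $0$ as $\xi\to+\infty$ (the ``stable direction of the flow''), and integrating $\big(X^{1-q}/(1-q)\big)'=X^{-q}X'=-c^{-1}(1+o(1))$ gives $X^{1-q}\sim (q-1)\xi/c$ and thus \eqref{beh.P1c}. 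The uniqueness of this orbit in $\{X>0\}$ follows since any orbit there converging to $P_1$ as $\xi\to+\infty$ cannot lie on $W^u$, hence must approach tangentially to $\{Y=0\}$, i.e. sit on a center manifold, and all center manifolds share the leading behavior $-X^q/c$.

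The case $c<0$ is entirely parallel, but, as stressed in the Introduction, the convection term $-knX^{n-1}Y$ destroys the $c\mapsto-c$ symmetry, so it really has to be run again rather than deduced by reflection. Now the negative eigenvalue $c$ produces a one-dimensional stable manifold tangent to $e_2$, along which the linearized flow as $\xi\to+\infty$ gives the exponential tail \eqref{beh.P1s}; the center manifold computation is identical, $h(X)=-X^q/c+\text{h.o.t.}$, but since $c<0$ this is now positive for $X>0$, so $X'=h(X)>0$, the center direction is the unstable one, orbits on it approach $P_1$ only as $\xi\to-\infty$, and the same integration gives \eqref{beh.P1cbis} with $|\xi|$ and $|c|$ in place of $\xi$ and $c$.

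The routine parts are the eigenvalue/eigenvector computation and the appeal to the invariant manifold theorems. The step I expect to be the main obstacle is making the center-manifold asymptotics rigorous with only $C^1$ regularity of \eqref{PSsyst}: rather than quoting a higher-order center-manifold approximation result, I would pin down both the existence of the relevant orbit (the one with $f'/f\to 0$) and the sharp constant in \eqref{beh.P1c}--\eqref{beh.P1cbis} by a direct barrier argument on \eqref{TWODE} near $\pm\infty$, e.g. by constructing sub- and supersolutions for the scalar quantity $f^{1-q}$ or $f'/f^q$. A secondary but necessary check is that exactly one orbit of \eqref{PSsyst} in $\{X>0\}$ leaves $P_1$ along $W^u$ (respectively reaches it along $W^s$) and that no orbit other than the center-manifold one can approach $P_1$ tangentially to $\{Y=0\}$.
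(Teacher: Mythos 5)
Your proposal is correct and follows essentially the same route as the paper: linearization at $P_1$ with eigenvalues $0$ and $c$, the (un)stable manifold theorem for the exponential tail along $e_2=(1,c)$, and a center-manifold graph $Y=h(X)$ with leading term $-X^q/c$ whose reduced equation $X'=-X^q/c+o(X^q)$ is integrated to give the algebraic rates \eqref{beh.P1c} and \eqref{beh.P1cbis}. Where you propose a barrier argument to justify the center-manifold asymptotics and the uniqueness of the relevant orbit, the paper instead quotes Carr's approximation theorem together with Sijbrand's one-sided uniqueness result for center manifolds with attracting reduced flow, which is the cleaner way to settle the uniqueness claim in the half-plane $(0,\infty)\times\real$.
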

\begin{proof}
(a) Assume $c>0$. We observe that the linearization of the system in a neighborhood of $P_1$ has the matrix
$$
M(P_1)=\left(
         \begin{array}{cc}
           0 & 1 \\
           0 & c \\
         \end{array}
       \right),
$$
with eigenvalues $\lambda_1=0$ and $\lambda_2=c$ and respective eigenvectors $e_1=(1,0)$ and $e_2=(1,c)$. Since $c>0$, it follows from the uniqueness theory in \cite[Theorem 3.2.1]{GH} that there is a unique one-dimensional unstable manifold tangent to the vector $e_2$ and (possibly not unique) one-dimensional center manifolds tangent to the vector $e_1$ (and thus to the line $\{Y=0\}$). On the unstable manifold, we have
\begin{equation}\label{interm1}
\lim\limits_{\xi\to-\infty}\frac{Y(\xi)}{X(\xi)}=\lim\limits_{\xi\to-\infty}\frac{f'(\xi)}{f(\xi)}=c.
\end{equation}
The L'Hopital's rule then entails that
$$
\lim\limits_{\xi\to-\infty}\frac{\ln(f(\xi))}{\xi}=\lim\limits_{\xi\to-\infty}\frac{f'(\xi)}{f(\xi)}=c,
$$
which readily gives the local behavior \eqref{beh.P1u}. We now consider a center manifold, which is tangent to the line $\{Y=0\}$. Following the theory in \cite[Theorem 3, Section 2.5]{Carr}, we look for an approximation of the center manifold in the form
$$
Y=\phi(X):=aX^{\theta}+o(X^{\theta}),
$$
with $a\in\real$ and $\theta>1$ to be determined. Taking $\overline{n}=(a\theta X^{\theta-1},-1)$ to be the inward normal vector to the center manifold, the flow of the system across it (which is exactly the same quantity denoted by $(M\phi)(X)$ as in the general theory \cite[Theorem 3, Section 2.5]{Carr}) is given by the expression (neglecting $o(X^{\theta})$)
$$
(M\phi)(X)=a^2\theta X^{2\theta-1}-caX^{\theta}+knaX^{n-1+\theta}+X^p-X^q,
$$
and we wish to choose $\theta$ and $a$ such that $(M\phi)(X)=o(X^{\theta})$ as $X\to0$. Since $p>q$, $n\geq2$ and $\theta>1$, it follows that
$$
(M\phi)(X)=-caX^{\theta}-X^q+o(X^{\theta})+o(X^q),
$$
and it is immediate to see that $\theta=q$ and $ca=-1$, that is, $a=-1/c$. We thus infer that the center manifold has the first approximation
\begin{equation}\label{cmapprox}
Y=-\frac{1}{c}X^{q}+o(X^q)
\end{equation}
in a neighborhood of the origin, and the flow on the center manifold is given by the equation
\begin{equation}\label{reduc}
X'=-\frac{1}{c}X^{q}+o(X^q)<0,
\end{equation}
that is, any center manifold has a stable direction of the flow (that is, pointing toward the critical point). An application of \cite[Theorem 3.2']{Sij} proves that the center manifold is unique in the half-plane $(0,\infty)\times\real$. In terms of traveling wave profiles, the center manifold \eqref{cmapprox} writes
$$
f'(\xi)=-\frac{1}{c}f^q(\xi)+o(f^q(\xi)),
$$
which gives after obvious manipulations and an integration the local behavior \eqref{beh.P1c}, as claimed.

\medskip

(b) For $c<0$, the calculations are similar, but the directions of the flow are reversed. Indeed, the second eigenvalue is now $\lambda_2=c<0$ and thus we have a unique stable manifold, while a totally similar calculation gives that the center manifold \eqref{cmapprox} points into the positive cone $(0,\infty)^2$ of the phase plane and has an unstable direction of the flow, according to \eqref{reduc}. The local behaviors \eqref{beh.P1s}, respectively \eqref{beh.P1cbis}, follow by similar calculations as above.
\end{proof}
When $q>1$ and $c=0$, the matrix $M(P_1)$ has both eigenvalues equal to zero and the analysis is much more involved. We state below the outcome of this analysis and postpone the proof to the end of the paper.
\begin{proposition}\label{prop.P1c0}
When $c=0$, there is a unique trajectory of the system \eqref{PSsyst} going out of the critical point $P_1$ into the positive region $(0,\infty)^2$ of the phase plane, and a unique trajectory of the system \eqref{PSsyst} entering the critical point $P_1$ from the region $(0,\infty)\times(-\infty,0)$ of the phase plane. The traveling wave profiles corresponding to these trajectories have the following local behavior:
\begin{enumerate}
  \item If $n>(q+1)/2$, then the unstable trajectory, respectively the stable trajectory, correspond to the local behavior
\begin{equation}\label{beh.P10case1}
\begin{split}
&\lim\limits_{\xi\to-\infty}|\xi|^{2/(q-1)}f(\xi)=\left[\frac{q-1}{2}\sqrt{\frac{2}{q+1}}\right]^{-2/(q-1)}, \quad {\rm respectively}, \\
&\lim\limits_{\xi\to\infty}\xi^{2/(q-1)}f(\xi)=\left[\frac{q-1}{2}\sqrt{\frac{2}{q+1}}\right]^{-2/(q-1)}.
\end{split}
\end{equation}
  \item If $n=(q+1)/2$, let us introduce
\begin{equation}\label{V12}
v_1:=\frac{\sqrt{k^2n^2+4n}-kn}{2n}, \quad v_2:=\frac{-\sqrt{k^2n^2+4n}-kn}{2n}.
\end{equation}
Then the unstable trajectory, respectively the stable trajectory, correspond to the local behavior
\begin{equation}\label{beh.P10case2}
\begin{split}
&\lim\limits_{\xi\to-\infty}|\xi|^{1/(n-1)}f(\xi)=[v_1(n-1)]^{-1/(n-1)}, \quad {\rm respectively},\\
&\lim\limits_{\xi\to\infty}\xi^{1/(n-1)}f(\xi)=[-v_2(n-1)]^{-1/(n-1)}.
\end{split}
\end{equation}
  \item If $2\leq n<(q+1)/2$, then the unstable trajectory, respectively the stable trajectory, correspond to the local behavior
\begin{equation}\label{beh.P10case3}
\begin{split}
&\lim\limits_{\xi\to-\infty}|\xi|^{1/(q-n)}f(\xi)=\left[\frac{q-n}{kn}\right]^{-1/(q-n)}, \quad {\rm respectively},\\
&\lim\limits_{\xi\to-\infty}\xi^{1/(n-1)}f(\xi)=\left[k(n-1)\right]^{-1/(n-1)}.
\end{split}
\end{equation}
\end{enumerate}
\end{proposition}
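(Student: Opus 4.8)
The plan is to analyze the degenerate critical point $P_1=(0,0)$ of the system \eqref{PSsyst} when $c=0$, where the linearization $M(P_1)=\left(\begin{smallmatrix}0&1\\0&0\end{smallmatrix}\right)$ is nilpotent, so neither center-manifold theory nor the Hartman--Grobman theorem applies directly. The natural tool here is the classification of nilpotent singular points via Briot--Bouquet blow-up or, more efficiently, the theory in Andronov et al. (or Dumortier--Llibre--Art\'es): after a directional blow-up of the cusp-type point we recover hyperbolic (or semi-hyperbolic) singularities whose separatrices can be tracked back to the original phase plane. Since the paper says these technical points borrow from \cite{ILS25} and the proof is postponed to the end, I would follow the same route: introduce the blow-up variable, identify the surviving orbits in the positive cone $(0,\infty)\times\real$, and then translate the resulting asymptotics into profile decay rates using L'H\^opital/integration as in the proof of Proposition \ref{prop.P1}.

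Concretely, with $c=0$ the profile equation \eqref{TWODE} becomes $f''=-knf^{n-1}f'-f^p+f^q$, i.e.\ $Y'=-knX^{n-1}Y-X^p+X^q$ for $X>0$. Near the origin the dominant balance in $Y'$ is among the three terms $-knX^{n-1}Y$, $X^q$, and (from $X'=Y$) the ``inertial'' contribution; which two of these balance is governed by comparing $n-1$ with $q$, equivalently $n$ with $(q+1)/2$ after accounting for the scaling $Y\sim X'$. I would first guess a self-similar ansatz $X(\xi)\sim A|\xi|^{-\mu}$ as $\xi\to-\infty$ (for the unstable trajectory) and plug in: then $X'\sim \mu A|\xi|^{-\mu-1}$, $Y'\sim \mu(\mu+1)A|\xi|^{-\mu-2}$, $X^{n-1}Y\sim -\mu A^n|\xi|^{-n\mu-1}$ (up to sign), $X^q\sim A^q|\xi|^{-q\mu}$. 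Matching exponents yields: (i) $n\mu+1=q\mu$, i.e.\ $\mu=1/(q-n)$, valid when the convection term beats the inertial term, which is the regime $n<(q+1)/2$; (ii) $\mu+2=q\mu$, i.e.\ $\mu=2/(q-1)$, the ``pure Fisher-KPP'' balance, valid when $n>(q+1)/2$ so the convection term is negligible; (iii) the borderline $n=(q+1)/2$ where convection and inertia are of the same order, producing a quadratic for the coefficient $v$ — precisely the equation $nv^2+knv-1=0$ with roots $v_1,v_2$ in \eqref{V12}. Reading off the constants $A$ in each case gives the stated limits \eqref{beh.P10case1}--\eqref{beh.P10case3}; the stable trajectory entering $P_1$ from $(0,\infty)\times(-\infty,0)$ is handled by the symmetry $\xi\mapsto-\xi$ combined with $Y\mapsto-Y$, which in case (ii) swaps the roles of $v_1$ and $v_2$ (hence $-v_2$ appears), while in case (i) the stable side is governed instead by the balance $X^{n-1}Y\sim X'$ giving $\mu=1/(n-1)$ and constant $[k(n-1)]^{-1/(n-1)}$.

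The rigorous content, and the main obstacle, is not the formal matching but proving \emph{existence and uniqueness} of exactly one orbit on each side with this asymptotics — i.e.\ that the nilpotent point $P_1$ has, in the half-plane $X>0$, a single separatrix leaving into $(0,\infty)^2$ and a single one entering from $(0,\infty)\times(-\infty,0)$, and that no other orbit accumulates at $P_1$ from $X>0$. For this I would blow up: set $X=r$, $Y=r^{1+\delta}w$ with $\delta$ chosen to match the dominant balance (so $\delta=0$ in case (iii), $\delta=q-1$ vs.\ $\delta=n-1$ in the off-critical cases, reflecting the two possible weights), rewrite the system in $(r,w)$, divide by the common power of $r$ to desingularize, and analyze the resulting (now semi-hyperbolic or hyperbolic) equilibria on $r=0$; the admissible values of $w$ there are exactly $-1/c$-type constants, here the roots of the relevant algebraic equation, and hyperbolicity of these equilibria gives the uniqueness of the corresponding separatrix. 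One then verifies the separatrix stays in $X>0$ and pushes it back through the blow-up to obtain the asymptotic rate. I expect the bookkeeping of which weight $\delta$ is ``the'' correct one in the non-critical cases — and checking that the other candidate weight yields no orbit in the open quadrant — to be the delicate step, together with confirming that the $o(\cdot)$ corrections in the ansatz are genuinely lower order (which requires a second-order refinement of the expansion, routine but tedious). Once the blow-up picture is in place, the passage from $f'/f$ or $f'f^{-q}$ asymptotics to the displayed limits is identical to the integration argument already used for \eqref{beh.P1c}.
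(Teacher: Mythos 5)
Your proposal follows essentially the same route as the paper's Appendix: a formal power-law matching that identifies the three regimes $n\gtrless(q+1)/2$ and the corresponding decay exponents (including the quadratic $nv^2+knv-1=0$ for the borderline case, whose roots are exactly $v_1,v_2$ of \eqref{V12}), followed by a quasi-homogeneous blow-up $V=YX^{-(1+\delta)}$ with a time rescaling, after which the separatrices are the unique stable/unstable (or center) manifolds of the equilibria sitting on $\{X=0\}$, and finally an integration/L'H\^opital step to convert $f'f^{-(1+\delta)}\to\mathrm{const}$ into the displayed rates; the paper also closes the argument, as you anticipate, by ruling out orbits escaping to $V\to\pm\infty$ in the blown-up chart. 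Two local slips to fix before executing the plan. First, the parenthetical weights are wrong: consistency with $Y=X'\sim X^{1+1/\mu}$ forces $\delta=(q-1)/2$ when $n>(q+1)/2$ and $\delta=n-1$ when $n\le(q+1)/2$ (the paper uses $V=YX^{-(q+1)/2}$ and $V=YX^{-n}$ respectively); with $\delta=0$ or $\delta=q-1$ the rescaled system has no finite equilibria on $\{X=0\}$ at the relevant values and the desingularization fails. Second, there is no symmetry $\xi\mapsto-\xi$, $Y\mapsto-Y$ available here: that transformation flips the sign of the convection term (it maps $k$ to $-k$), which is precisely why the forward and backward tails are genuinely asymmetric ($v_1\neq -v_2$, and in the subcritical case the two ends even have different exponents). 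The correct mechanism, which your blow-up already provides, is that the outgoing and incoming separatrices are the unstable manifold of one equilibrium $Q_1$ and the stable manifold of the other equilibrium $Q_2$ of the single desingularized system (and, when $n<(q+1)/2$, a center manifold at a semi-hyperbolic $Q_1$, whose uniqueness in the half-plane needs the Sijbrand-type argument rather than hyperbolicity).
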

Note that, in the case $n=(q+1)/2$, all the three exponents in the local behavior coincide; that is, 
$$
\frac{2}{q-1}=\frac{1}{n-1}=\frac{1}{q-n}.
$$
The proof is based on a number of further transformations in order to "blow-up" the singularity at $(X,Y)=(0,0)$ and is given in the Appendix.

We are left with the local analysis of the origin when $q=1$, which is given in the next statement.
\begin{proposition}\label{prop.P1q1}
Let $q=1$, $c\in\real$ and set
$$
\lambda_1:=\frac{c+\sqrt{c^2+4}}{2}>0, \quad \lambda_2:=\frac{c-\sqrt{c^2+4}}{2}<0.
$$
Then the critical point $P_1$ is a saddle point. The unique trajectory contained in its unstable manifold, respectively its stable manifold, corresponds to traveling wave profiles with the local behavior:
\begin{equation}\label{beh.P1q1}
\lim\limits_{\xi\to-\infty}e^{-\lambda_1\xi}f(\xi)\in(0,\infty), \quad {\rm respectively} \quad
\lim\limits_{\xi\to\infty}e^{-\lambda_2\xi}f(\xi)\in(0,\infty).
\end{equation}
\end{proposition}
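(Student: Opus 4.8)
The key observation is that, in contrast with the cases treated in Propositions \ref{prop.P1} and \ref{prop.P1c0}, taking $q=1$ replaces the degenerate absorption term $X_+^q$ by the \emph{linear} term $X$, which removes the degeneracy of the linearization and makes $P_1$ a textbook hyperbolic saddle. So I would first note that for $X>0$ near the origin the system reads $X'=Y$, $Y'=cY-knX^{n-1}Y-X^p+X$, which is of class $C^1$ (the terms $-knX^{n-1}Y$ and $-X^p$ being of order strictly larger than one, since $n\geq2$ and $p>1$), with linearization at $P_1$
$$
M(P_1)=\begin{pmatrix}0&1\\1&c\end{pmatrix},
$$
characteristic equation $\lambda^2-c\lambda-1=0$, and eigenvalues $\lambda_1,\lambda_2$ exactly as in the statement (of opposite sign, since $\lambda_1\lambda_2=-1$), with eigenvectors $(1,\lambda_1)$ and $(1,\lambda_2)$. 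Hence $P_1$ is a hyperbolic saddle and the Stable Manifold Theorem yields a unique one-dimensional unstable manifold tangent at $P_1$ to $(1,\lambda_1)$ and a unique one-dimensional stable manifold tangent to $(1,\lambda_2)$.

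Next I would single out the relevant branches. Since $\lambda_1>0$, the branch of the unstable manifold leaving $P_1$ into $\{X>0\}$ enters the open first quadrant and is the unique orbit of \eqref{PSsyst} emanating from $P_1$ into $\{X>0\}$; since $\lambda_2<0$, the branch of the stable manifold reaching $P_1$ from $\{X>0\}$ does so through the region $\{X>0,\,Y<0\}$ and is the unique orbit entering $P_1$ from there. In profile language these correspond, respectively, to a profile increasing to $0$ as $\xi\to-\infty$ and to a profile decreasing to $0$ as $\xi\to+\infty$.

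It remains to get the \emph{sharp} exponential rates in \eqref{beh.P1q1}. Writing the local unstable manifold as a graph $Y=h(X)$ with $h(0)=0$, $h'(0)=\lambda_1$, the reduced flow is $X'=h(X)$; substituting this ansatz into the invariance equation and using that the vector field is the linear part plus monomial-type terms of order $X^n$ and $X^p$, one checks $h(X)=\lambda_1X+O(X^{1+\alpha})$ for some $\alpha>0$ that can be read off from $n\geq2$ and $p>1$ (one finds $\alpha=\min\{n-1,p-1\}$), so that $X'=\lambda_1X\,(1+\psi(X))$ with $\psi(X)=O(X^\alpha)$. From $X'/X\to\lambda_1$ one first gets $X(\xi)\leq Ce^{(\lambda_1-\varepsilon)\xi}$ near $\xi=-\infty$ for every small $\varepsilon>0$, hence $\psi(X(\xi))=O(e^{\alpha(\lambda_1-\varepsilon)\xi})$ is integrable at $-\infty$; integrating $\tfrac{d}{d\xi}\ln X(\xi)=\lambda_1\,(1+\psi(X(\xi)))$ gives $\ln X(\xi)-\lambda_1\xi\to\ell\in\real$, i.e. $e^{-\lambda_1\xi}f(\xi)\to e^{\ell}\in(0,\infty)$. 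The same computation on the stable manifold $Y=\widetilde h(X)$, $\widetilde h'(0)=\lambda_2$, carried out as $\xi\to+\infty$, yields $e^{-\lambda_2\xi}f(\xi)\to L\in(0,\infty)$; alternatively, since there can be no resonance for a one-dimensional invariant manifold, one may simply invoke a standard linearization/normal-form result to obtain these asymptotics directly. The only step that is not automatic is precisely this last one, namely upgrading the tangency relations $f'/f\to\lambda_i$ to the finite and positive limits of \eqref{beh.P1q1}; this is where the hypotheses $n\geq2$ and $p>1$ (guaranteeing $\alpha>0$) are used.
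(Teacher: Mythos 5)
Your proposal is correct, and its skeleton coincides with the paper's: the linearization at $P_1$ with $q=1$ is $\bigl(\begin{smallmatrix}0&1\\1&c\end{smallmatrix}\bigr)$, the eigenvalues are $\lambda_{1,2}=(c\pm\sqrt{c^2+4})/2$ of opposite sign, and the Stable Manifold Theorem gives the unique unstable and stable one-dimensional manifolds tangent to $(1,\lambda_1)$ and $(1,\lambda_2)$, whose branches in $\{X>0\}$ carry the two relevant profiles. Where you diverge is in the last step. The paper stops at $\lim_{\xi\to\mp\infty}Y(\xi)/X(\xi)=\lambda_{1,2}$ and then invokes the L'H\^opital-type argument of Proposition \ref{prop.P1}, which literally only yields $\ln f(\xi)/\xi\to\lambda_i$, i.e.\ $f(\xi)=e^{\lambda_i\xi(1+o(1))}$ --- a statement strictly weaker than the claimed finite positive limit of $e^{-\lambda_i\xi}f(\xi)$. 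You correctly flag this as the only non-automatic point and close it: writing the invariant manifold as a graph $Y=\lambda_iX+O(X^{1+\alpha})$ with $\alpha=\min\{n-1,p-1\}>0$, you get $\frac{d}{d\xi}\ln X=\lambda_i(1+\psi(X))$ with $\psi(X(\xi))$ exponentially small, hence integrable at the relevant end, so $\ln X(\xi)-\lambda_i\xi$ converges and the sharp asymptotics \eqref{beh.P1q1} follow. This is a genuine (if routine) strengthening of the argument as written in the paper, and it makes explicit where the hypotheses $n\geq2$, $p>1$ enter; the paper's version buys brevity at the cost of leaving that upgrade implicit.
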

\begin{proof}
With $q=1$, the linearization of the system in a neighborhood of $P_1$ has the matrix
$$
\left(
  \begin{array}{cc}
    0 & 1 \\
    1 & c \\
  \end{array}
\right),
$$
with eigenvalues $\lambda_1>0$ and $\lambda_2<0$ and corresponding eigenvectors $e_1=(1,\lambda_1)$ and $e_2=(1,\lambda_2)$, respectively. We infer from \cite[Theorem 3.2.1]{GH} that the stable and unstable manifold are unique, and by the stable manifold theorem, that they are tangent to the vectors $e_1$, respectively $e_2$. We thus deduce that, on these trajectories, we have
$$
\lim\limits_{\xi\to-\infty}\frac{Y(\xi)}{X(\xi)}=\lambda_1, \quad {\rm respectively} \quad \lim\limits_{\xi\to\infty}\frac{Y(\xi)}{X(\xi)}=\lambda_2,
$$
from where the local behavior \eqref{beh.P1q1} follows readily by undoing the change of variable \eqref{PSchange} and an argument based on the L'Hopital's rule similar to the one employed in the proof of Proposition \ref{prop.P1}.
\end{proof}
This is the only difference in the analysis introduced by the limiting case $q=1$. In the rest of the paper, we assume that the condition \eqref{range.exp} is in force.

The analysis of the critical point $P_2$ is the one allowing for a Hopf bifurcation at $c=kn$.
\begin{proposition}\label{prop.P2}
The critical point $P_2$ is
\begin{itemize}
  \item An unstable node if $c\geq kn+2\sqrt{p-q}$.
  \item An unstable focus if $c\in(kn,kn+2\sqrt{p-q})$.
  \item A stable focus if $c\in(kn-2\sqrt{p-q},kn)$.
  \item A stable node if $c\leq kn-2\sqrt{p-q}$.
\end{itemize}
Moreover, if $n\neq p+q+1$, a Hopf bifurcation occurs at $c=kn$, generating at least one limit cycle in the system \eqref{PSsyst} either for $c<kn$ when $n<p+q+1$, or for $c>kn$ when $n>p+q+1$.
\end{proposition}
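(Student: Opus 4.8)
The plan is to treat the two assertions of the proposition separately: the topological classification of $P_2$ follows from a routine linearization, whereas the Hopf bifurcation claim requires computing the first Lyapunov coefficient of the system \eqref{PSsyst} at $c=kn$.

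\textbf{Step 1 (linear classification).} Since $X>0$ in a neighborhood of $P_2=(1,0)$, the positive-part cutoff in \eqref{PSsyst} is inactive there, and differentiating the right-hand side gives the linearization matrix
$$
M(P_2)=\begin{pmatrix} 0 & 1 \\ q-p & c-kn \end{pmatrix},
$$
with characteristic equation $\lambda^2-(c-kn)\lambda+(p-q)=0$, hence eigenvalues $\lambda_\pm=\tfrac12\bigl[(c-kn)\pm\sqrt{(c-kn)^2-4(p-q)}\bigr]$. Because $p>q$, the product $\lambda_+\lambda_-=p-q$ is positive, so two real eigenvalues always share the sign of the trace $c-kn$, while complex eigenvalues have real part $(c-kn)/2$. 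Comparing $(c-kn)^2$ with $4(p-q)$ and appealing to the Hartman--Grobman theorem (valid for $c\neq kn$, where $P_2$ is hyperbolic) yields the four cases exactly as stated; at the threshold values $c=kn\pm2\sqrt{p-q}$ there is a repeated nonzero eigenvalue and $P_2$ is an improper node of the indicated stability.

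\textbf{Step 2 (Hopf bifurcation).} At $c=kn$ the eigenvalues are $\pm i\sqrt{p-q}$, simple and purely imaginary, and they cross the imaginary axis transversally since $\tfrac{d}{dc}\mathrm{Re}(\lambda_\pm)\big|_{c=kn}=\tfrac12\neq0$; by the Hopf bifurcation theorem (see \cite[Section 3.4]{GH}) a Hopf bifurcation occurs at $c=kn$. To locate the bifurcating cycle I would move $P_2$ to the origin via $u=X-1$, $v=Y$, Taylor-expand the right-hand side of \eqref{PSsyst} to third order in $u$ (observing that $v$ enters only linearly through the convection term, so there are no $v^2,v^3$ contributions to track), bring the linear part to the standard rotational form through the change $(u,v)\mapsto(u,-v/\sqrt{p-q})$, and then apply the Guckenheimer--Holmes formula for the first Lyapunov coefficient. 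Using the algebraic identity $\frac{p(p-1)-q(q-1)}{p-q}=p+q-1$, I expect this coefficient to collapse to
$$
a=\frac{kn(n-1)(p+q+1-n)}{16}.
$$
Since $k>0$ and $n-1\geq1$, one has $a>0$ when $n<p+q+1$, giving a subcritical Hopf and hence a limit cycle for $c<kn$ (the side on which $P_2$ is a stable focus), and $a<0$ when $n>p+q+1$, giving a supercritical Hopf and a limit cycle for $c>kn$; the degenerate case $a=0$ is precisely $n=p+q+1$, which is therefore excluded. This reproduces the stated dichotomy, and the local Hopf cycle of course lies in the region $X>0$.

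\textbf{Main obstacle.} Everything except the Lyapunov-coefficient computation is elementary. The delicate part is that third-order computation: correctly expanding $X^p-X^q$ and the convection term $knX^{n-1}Y$ around $(1,0)$, keeping track of the surviving mixed cubic term after the orientation-reversing normalization of the linear part, and handling the sign conventions in the Guckenheimer--Holmes formula so that the direction of the bifurcating branch comes out right. A small arithmetic slip there would corrupt both the threshold $n=p+q+1$ and the side on which the cycle appears, so this is the step that must be carried out with care.
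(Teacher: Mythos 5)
Your proposal is correct and follows essentially the same route as the paper: a linearization of \eqref{PSsyst} at $P_2$ for the four-case classification, followed by a computation of the first Lyapunov quantity at $c=kn$ (the paper applies Perko's formula for $\sigma$ directly to the canonical form with $\widetilde{a}=\widetilde{d}=0$, $\widetilde{b}=1$, $\widetilde{c}=q-p$, whereas you first normalize the linear part and invoke the Guckenheimer--Holmes formula). Your anticipated value $a=\tfrac{1}{16}kn(n-1)(p+q+1-n)$ is indeed what that computation yields, and it agrees in sign with the paper's $\sigma=-\tfrac{3\pi kn(n-1)}{4\sqrt{p-q}}\,(n-p-q-1)$ via the identity $q(q-1)-p(p-1)=-(p-q)(p+q-1)$, so the subcritical/supercritical dichotomy and the side of $c=kn$ on which the limit cycle appears come out exactly as stated.
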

\begin{proof}
The linearization of the system \eqref{PSsyst} in a neighborhood of the critical point $P_2$ has the matrix
$$
M(P_2)=\left(
         \begin{array}{cc}
           0 & 1 \\
           q-p & c-kn \\
         \end{array}
       \right),
$$
with eigenvalues
$$
\lambda_{+}=\frac{c-kn+\sqrt{(c-kn)^2-4(p-q)}}{2}, \quad \lambda_{-}=\frac{c-kn-\sqrt{(c-kn)^2-4(p-q)}}{2}.
$$
We thus observe that the two eigenvalues are real numbers if either $c\geq kn+2\sqrt{p-q}$ (and in this case both are positive, leading to an unstable node) or $c\leq kn-2\sqrt{p-q}$ (and in this case both are negative, leading to a stable node), while for $c\in(kn-2\sqrt{p-q},kn+2\sqrt{p-q})$ we have ${\rm Re}\lambda_{+}={\rm Re}\lambda_{-}=(c-kn)/2$, and thus $P_2$ is an unstable focus if $c\in(kn,kn+2\sqrt{p-q})$ and a stable focus if $c\in(kn-2\sqrt{p-q},kn)$, completing the classification. We moreover observe that, for $c=kn$, both eigenvalues $\lambda_+$ and $\lambda_-$ are purely imaginary numbers. We show next that a Hopf bifurcation occurs for $c=kn$, employing the theory given in \cite[Section 4.4]{Pe}. To this end, we translate $P_2$ to the origin by setting $X=\overline{X}+1$. In terms of this new variable, the second equation of the system \eqref{PSsyst} writes
\begin{equation}\label{interm2}
\begin{split}
Y'&=cY-knY(\overline{X}+1)^{n-1}-(\overline{X}+1)^p+(\overline{X}+1)^q\\
&=(c-kn)Y+(q-p)\overline{X}-\frac{p(p-1)}{2}\overline{X}^2+\frac{q(q-1)}{2}\overline{X}^2-kn(n-1)\overline{X}Y\\
&+\left[\frac{q(q-1)(q-2)}{6}-\frac{p(p-1)(p-2)}{6}\right]\overline{X}^3-\frac{kn(n-1)(n-2)}{2}\overline{X}^2Y\\&+o(|(\overline{X},Y)|^3).
\end{split}
\end{equation}
We infer from \eqref{interm2} that the system \eqref{PSsyst} can be written in the general canonical form
$$
\left\{\begin{array}{ll}\overline{X}'=\widetilde{a}\overline{X}+\widetilde{b}Y+\widetilde{p}(\overline{X},Y), \\Y'=\widetilde{c}\overline{X}+\widetilde{d}Y+\widetilde{q}(\overline{X},Y),\end{array}\right.
$$
with $\widetilde{a}=0$, $\widetilde{b}=1$, $\widetilde{c}=q-p<0$, $\widetilde{d}=c-kn$, $\widetilde{p}\equiv0$ and
$$
\widetilde{q}(\overline{X},Y)=\sum\limits_{i+j\geq2}b_{ij}\overline{X}^{i}Y^{j},
$$
with
\begin{equation*}
\begin{split}
&b_{20}=\frac{q(q-1)-p(p-1)}{2}, \quad b_{11}=-kn(n-1), \quad b_{02}=0, \\
&b_{30}=\frac{q(q-1)(q-2)-p(p-1)(p-2)}{6}, \quad b_{21}=-\frac{kn(n-1)(n-2)}{2}, \quad b_{12}=b_{03}=0.
\end{split}
\end{equation*}
We can thus compute the Lyapunov number (cf. \cite[Section 4.4]{Pe}) of the bifurcation occuring at $\widetilde{d}=0$ or $c=kn$ as follows:
\begin{equation*}
\begin{split}
\sigma&=-\frac{3\pi}{2(p-q)^{3/2}}\left[\frac{kn(n-1)[q(q-1)-p(p-1)]}{2}-\frac{(q-p)kn(n-1)(n-2)}{2}\right]\\
&=-\frac{3\pi}{2(p-q)^{3/2}}\frac{kn(n-1)}{2}[q(q-1)-p(p-1)+(p-q)(n-2)]\\
&=-\frac{3kn(n-1)\pi}{4(p-q)^{1/2}}(n-p-q-1).
\end{split}
\end{equation*}
We thus infer that, if $n<p+q+1$, then $\sigma>0$ and thus a \emph{subcritical Hopf bifurcation} occurs at $c=kn$, resulting in the formation of a unique limit cycle in the system \eqref{PSsyst} in a neighborhood of $P_2$ for $c<kn$ in view of Hopf's Theorem (see, for example, \cite[Theorem 1, Section 4.4]{Pe}). On the contrary, if $n>p+q+1$ then $\sigma<0$ and a \emph{supercritical Hopf bifurcation} occurs at $c=kn$, resulting the formation of a unique limit cycle in the system \eqref{PSsyst} in a neighborhood of $P_2$ for $c>kn$, completing the proof.
\end{proof}

\medskip

\noindent \textbf{Remark.} Observe that the previous analysis depends strongly on the fact that $p>q$. In the more common opposite order $p<q$ (the standard Fisher order of exponents), the critical point $P_2$ is a hyperbolic saddle point, there is no bifurcation stemming from it, and the analysis of the phase plane associated to the system \eqref{PSsyst} is rather straightforward. This is why, the case $p>q$ is mathematically much more interesting.

In order to fix the notation, for any $c\in\real$, we denote by $l_1(c)$, respectively $l_0(c)$, the unique unstable trajectory of $P_1$, respectively the unique stable trajectory of $P_1$.

\section{The phase portrait when $c\geq kn+2\sqrt{p-q}$}\label{sec.large}

In this section, we perform a global analysis of the trajectories $l_1(c)$ and $l_0(c)$ for $c\geq kn+2\sqrt{p-q}$. We start with a preparatory calculus lemma.
\begin{lemma}\label{lem.calc}
Let $p>q\geq1$. Then, the function
$$
h:[0,1)\mapsto\real, \quad h(x)=\frac{x^p-x^q}{x-1}
$$
is increasing on $[0,1)$.
\end{lemma}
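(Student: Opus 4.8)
The plan is to prove that $h'(x)>0$ for every $x\in(0,1)$ and then to promote this to strict monotonicity on all of $[0,1)$. Indeed, $h$ is continuous on $[0,1)$ (a quotient of continuous functions with non-vanishing denominator) and differentiable on $(0,1)$, with $h(0)=0$ and $h>0$ on $(0,1)$, since there the numerator $x^p-x^q=x^q(x^{p-q}-1)$ and the denominator $x-1$ are both negative. Hence positivity of $h'$ on the open interval, combined with the mean value theorem and continuity at $0$, gives the statement. So everything reduces to the interior estimate.

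For the interior, I would differentiate directly. The quotient rule gives $h'(x)=N(x)/(x-1)^2$ with
\[
N(x)=\bigl(px^{p-1}-qx^{q-1}\bigr)(x-1)-\bigl(x^p-x^q\bigr)=(p-1)x^p-px^{p-1}-(q-1)x^q+qx^{q-1},
\]
so the claim becomes $N(x)>0$ on $(0,1)$. The key observation is that $N$ is a difference of values of one parameter family: setting $g(r):=x^{r-1}\bigl[r(x-1)-x\bigr]=(r-1)x^r-rx^{r-1}$ for $r\geq1$, one has $N(x)=g(p)-g(q)$. Thus it suffices to show that, for each fixed $x\in(0,1)$, the map $r\mapsto g(r)$ is strictly increasing on $[1,\infty)$, since $p>q$. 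Differentiating in $r$,
\[
\partial_r g(r)=x^{r-1}\Bigl\{(\ln x)\bigl[r(x-1)-x\bigr]+(x-1)\Bigr\}=:x^{r-1}\,\phi(r),
\]
and as $x^{r-1}>0$ it is enough to check $\phi(r)>0$. But $\phi$ is affine in $r$ with slope $(x-1)\ln x>0$ (both factors are negative on $(0,1)$), hence increasing in $r$, so it suffices to verify $\phi(1)=-\ln x+(x-1)>0$. This is exactly the elementary inequality $\ln x<x-1$, valid for all $x\in(0,1)$. Chaining back: $\phi(r)\geq\phi(1)>0$ on $[1,\infty)$, hence $\partial_r g>0$, hence $g(p)>g(q)$, hence $N(x)>0$, hence $h'(x)>0$, which closes the argument.

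The only real point — and what I would flag as the main obstacle — is spotting the reformulation $N(x)=g(p)-g(q)$ with $g$ monotone in the exponent; once that is in place the proof collapses onto a single textbook inequality. A head-on attempt to establish $N>0$ by studying $N$ itself (for instance, noting $N(1)=0$ and trying to show $N$ is decreasing near $1$) is feasible but messier, and in particular one cannot shortcut via the factorization $x^p-x^q=x^q(x^{p-q}-1)$ and a geometric sum, because $p-q$ need be neither an integer nor $\geq1$ under \eqref{range.exp}. An essentially equivalent but slightly more conceptual variant, which I would mention as an alternative, is to write $h=\int_q^p\partial_r G_r\,dr$ with $G_r(x)=(x^r-1)/(x-1)$ and differentiate under the integral sign; the integrand's $x$-derivative comes out to be $x^{r-1}\phi(r)/(x-1)^2>0$ with the same $\phi$, so $h'>0$ follows immediately.
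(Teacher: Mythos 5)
Your proof is correct. Let me just confirm the key identity: expanding the numerator of $h'$ gives $N(x)=(p-1)x^p-px^{p-1}-(q-1)x^q+qx^{q-1}$, which is indeed $g(p)-g(q)$ for $g(r)=(r-1)x^r-rx^{r-1}=x^{r-1}[r(x-1)-x]$, and your computation $\partial_r g=x^{r-1}\phi(r)$ with $\phi$ affine in $r$, of positive slope $(x-1)\ln x$, and $\phi(1)=x-1-\ln x>0$ is accurate; so $N>0$ on $(0,1)$ and the endpoint $x=0$ is handled by continuity. This is a genuinely different route from the paper's. The paper splits into the two cases $0<p-q\leq1$ and $p-q>1$: in the first it writes $h'(x)=x^{q-1}(x-1)^{-2}\,\overline{l}(x)$ with $\overline{l}(x)=(p-1)x^{p-q+1}-px^{p-q}-(q-1)x+q$ and establishes $\overline{l}\geq0$ by computing $\overline{l}'$ and $\overline{l}''$ (the sign of $\overline{l}''$ is exactly what forces the restriction $p-q\leq1$), while in the second it factors $h(x)=x^q\,\widetilde{h}(x)$ with $\widetilde{h}(x)=(x^{p-q}-1)/(x-1)$ and repeats the same scheme for $\widetilde{h}$. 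Your reformulation as monotonicity of $r\mapsto g(r)$ in the exponent collapses both cases into the single elementary inequality $\ln x<x-1$, avoiding the case distinction and giving a slightly stronger conclusion ($h'>0$ on all of $(0,1)$, uniformly in the relative size of $p-q$); the paper's version is more pedestrian but uses only polynomial calculus, with no differentiation with respect to a parameter. Your side remarks (why a geometric-sum shortcut is unavailable for non-integer $p-q$, and the equivalent formulation $h=\int_q^p\partial_r G_r\,dr$ with $G_r(x)=(x^r-1)/(x-1)$) are also correct.
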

\begin{proof}
We split the analysis into two cases.

\medskip

\noindent \textbf{Case 1: $0<p-q\leq1$.} A straightforward calculation gives
$$
h'(x)=\frac{x^{q-1}}{(x-1)^2}\overline{l}(x), \quad \overline{l}(x):=(p-1)x^{p-q+1}-px^{p-q}-(q-1)x+q.
$$
We then observe that
$$
\overline{l}'(x)=(p-1)(p-q+1)x^{p-q}-p(p-q)x^{p-q-1}-(q-1)
$$
and
$$
\overline{l}''(x)=(p-1)(p-q+1)(p-q)x^{p-q-1}-p(p-q)(p-q-1)x^{p-q-2}>0, \quad 0<x<1,
$$
hence $\overline{l}'$ is increasing. Thus, $\overline{l}'(x)\leq \overline{l}'(1)=0$ for any $x\in[0,1]$ and thus $\overline{l}$ is non-increasing. This entails that $\overline{l}(x)\geq \overline{l}(1)=0$ for any $x\in[0,1]$, and thus $h'(x)\geq0$ (with equality if and only if $x=0$), completing the proof.

\medskip

\noindent \textbf{Case 2: $p-q>1$.} In this case, we can write
$$
h(x)=x^q\widetilde{h}(x), \quad \widetilde{h}(x):=\frac{x^{p-q}-1}{x-1}.
$$
We deduce by direct calculation that
$$
\widetilde{h}'(x)=\frac{1}{(x-1)^2}\widetilde{l}(x), \quad \widetilde{l}(x):=(p-q-1)x^{p-q}-(p-q)x^{p-q-1}+1.
$$
Then
$$
\widetilde{l}'(x)=(p-q)(p-q-1)(x^{p-q-1}-x^{p-q-2})\leq0, \quad x\in[0,1].
$$
Thus, $\widetilde{l}$ is non-increasing and $\widetilde{l}(x)\geq\widetilde{l}(1)=0$. We infer that $\widetilde{h}'(x)\geq0$, hence $\widetilde{h}$ is increasing and thus $h$ is increasing as well, as claimed.
\end{proof}
This lemma will be very useful for the following result, characterizing the behavior of the trajectories $l_1(c)$ and $l_0(c)$.
\begin{proposition}\label{prop.large}
Let $n$, $p$, $q$, $k$ as in \eqref{range.exp} and let $c\geq kn+2\sqrt{p-q}$. Then:

$\bullet$ the orbit $l_0(c)$ connects $P_2$ to $P_1$ and, outside its endpoints, is fully contained in the quadrant $(X,Y)\in(0,\infty)\times(-\infty,0)$ of the phase plane.

$\bullet$ the orbit $l_1(c)$ crosses the $X$-axis at a point $(X_1(c),0)$ with $X_1(c)>1$.
\end{proposition}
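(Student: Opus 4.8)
\emph{Sketch of the intended argument.} Both bullets will be read off the planar flow of \eqref{PSsyst}, and the only quantitative ingredient beyond the local analysis of Section \ref{sec.syst} will be the ``energy''
\[
\mathcal{E}(\xi):=\tfrac12 Y(\xi)^2+\frac{X(\xi)^{p+1}}{p+1}-\frac{X(\xi)^{q+1}}{q+1},
\]
which, after multiplying the second equation of \eqref{PSsyst} by $Y$, satisfies $\mathcal{E}'=(c-knX_{+}^{n-1})Y^2$. Since $c\geq kn+2\sqrt{p-q}>kn$, the point $X^{\sharp}:=(c/(kn))^{1/(n-1)}$ is strictly larger than $1$, so $\mathcal{E}'\geq 0$ on $\{0\le X\le 1\}$ and $\mathcal{E}'\le 0$ on $\{X\ge X^{\sharp}\}$; I will also use repeatedly that $\mathcal{E}(\xi)\ge \frac{X^{p+1}}{p+1}-\frac{X^{q+1}}{q+1}\to+\infty$ as $X\to\infty$. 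Recall finally that for this range of $c$ the eigenvalues $\lambda_{\pm}$ of $P_2$, i.e. the roots of $\lambda^2-(c-kn)\lambda+(p-q)=0$, are real with $0<\lambda_{-}\le\lambda_{+}$.

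\emph{The orbit $l_1(c)$.} The plan is to follow $l_1(c)$ forward out of $P_1$. Near $P_1$ it is tangent to $e_2=(1,c)$, hence it enters $\{0<X<1,\ Y>0\}$, and in that region $X'=Y>0$ while $Y'=Y(c-knX^{n-1})+(X^q-X^p)>0$, each summand being positive because $0<X<1$ and $c>kn$; so both coordinates increase strictly. Since $\mathcal{E}\to0$ at $P_1$ and $\mathcal{E}'\ge 0$ on $\{X\le 1\}$, one has $\mathcal{E}\ge0$, i.e. $Y^2\ge 2(\frac{X^{q+1}}{q+1}-\frac{X^{p+1}}{p+1})$, whose right-hand side stays bounded below by a positive constant as $X\to1^-$; thus $X'=Y$ is bounded away from $0$ near $X=1$ and $l_1(c)$ reaches $\{X=1\}$ at a finite $\xi$, with $Y>0$ there. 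In $\{X>1,\ Y>0\}$ I would then argue that $Y$ must come back to $0$: otherwise $X'=Y>0$ forever, $X$ cannot converge (a limit $(X_*,0)$ with $X_*>1$ is not stationary, and there $Y'\to X_*^q-X_*^p<0$ forces $Y$ negative), so $X\to+\infty$, hence $\mathcal{E}\to+\infty$, contradicting $\mathcal{E}'\le 0$ for $X>X^{\sharp}$. The first zero of $Y$ past $X=1$ is the crossing point $(X_1(c),0)$ with $X_1(c)>1$.

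\emph{The orbit $l_0(c)$.} Here the plan is to trap $l_0(c)$ inside the open wedge
\[
\Omega_c:=\{(X,Y)\ :\ 0<X<1,\ \lambda_{-}(X-1)<Y<0\},
\]
whose closure is the curvilinear triangle with vertices $P_1$, $P_2$ and $(0,-\lambda_{-})$. First I would check that the vector field of \eqref{PSsyst} points strictly out of $\Omega_c$ along its whole boundary, so that $\Omega_c$ is invariant under the backward flow: on $\{Y=0,\,0<X<1\}$ the field is $(0,X^q-X^p)$, pointing up and out; on $\{X=0,\,-\lambda_{-}<Y<0\}$ it is $(Y,cY)$ with $Y<0$, pointing left and out; and on the slanted side $\{Y=\lambda_{-}(X-1)\}$ the outward condition $\lambda_{-}X'-Y'>0$ becomes, after substituting $Y=-\lambda_{-}(1-X)$ and $X^p-X^q=(X-1)h(X)$ (with $h$ as in Lemma \ref{lem.calc}), equivalent to $h(X)<\lambda_{-}(c-\lambda_{-}-knX^{n-1})$. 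Using $\lambda_{-}^2=(c-kn)\lambda_{-}-(p-q)$, the right-hand side simplifies to $kn\lambda_{-}(1-X^{n-1})+(p-q)\ge p-q$, while Lemma \ref{lem.calc} gives exactly $h(X)<p-q$ on $[0,1)$ --- this is the precise role of the calculus lemma. Since $l_0(c)$ reaches $P_1$ as $\xi\to+\infty$ along the center manifold $Y=-\tfrac1c X^q+o(X^q)$ (Proposition \ref{prop.P1}(a)), it lies in $\Omega_c$ for $\xi$ large, hence in $\Omega_c$ for all $\xi$; in particular $0<X<1$ and $Y<0$ off the endpoints, which is the asserted confinement. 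Running backwards, $X'=Y<0$ makes $X$ increase monotonically to a limit $X_\infty\in(0,1]$, and boundedness of $Y$ and $Y'$ on $\overline{\Omega_c}$ forces $Y\to0$; the limit point $(X_\infty,0)$ is stationary, so $X_\infty=1$ and $l_0(c)$ connects $P_2$ to $P_1$.

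\emph{Where the difficulty lies.} The genuinely delicate step is the choice of the trapping wedge $\Omega_c$ for $l_0(c)$: the slanted side must have precisely the eigenslope $\lambda_{-}$ of $P_2$ (any other slope fails the outward-flux test arbitrarily close to $P_2$), which is why $\lambda_{-}$ must be real --- exactly the hypothesis $c\ge kn+2\sqrt{p-q}$ of this section --- and why the verification collapses to the sharp elementary bound $h(X)<p-q$ of Lemma \ref{lem.calc}. For $l_1(c)$ the only point requiring care is excluding escape to $X=+\infty$ in $\{X>1,\,Y>0\}$, and that is exactly what the coercivity of $\mathcal{E}$, played against the sign of $\mathcal{E}'$ for $X>X^{\sharp}$, is for.
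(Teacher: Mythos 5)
Your argument is correct, and it splits naturally into one part that mirrors the paper and one that does not. For the orbit $l_0(c)$ you are doing essentially what the paper does: the paper also traps $l_0(c)$ backwards in time inside the triangle bounded by the two axes and a line $Y=m(X-1)$ through $P_2$, reduces the flux inequality on the slanted side to the bound $h(X)\leq p-q$ of Lemma \ref{lem.calc} via the quadratic $m^2-m(c-kn)+(p-q)\leq0$, and this is precisely where the hypothesis $c\geq kn+2\sqrt{p-q}$ enters. The only differences are cosmetic: the paper allows any $m\in[\lambda_-,\lambda_+]$ rather than insisting on $m=\lambda_-$ (so your side remark that only the eigenslope passes the flux test is a slight overstatement --- the same computation $kn\, m(1-X^{n-1})+\lambda_- \lambda_+ - m(\lambda_-+\lambda_+-m)\cdot 0 \geq p-q$ works for the whole interval, exactly as in \eqref{interm3}), and the paper closes with Poincar\'e--Bendixson where you use monotone convergence of $X$ plus a Barbalat-type argument; both endgames are fine since the trapped region contains no critical points or cycles. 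For the orbit $l_1(c)$ you genuinely diverge: the paper writes the orbit as a graph $Y=Y(X)$ and excludes first a vertical and then a horizontal asymptote from the formula \eqref{interm4} for $dY/dX$, whereas you introduce the energy $\mathcal{E}=\tfrac12Y^2+\frac{X^{p+1}}{p+1}-\frac{X^{q+1}}{q+1}$ with $\mathcal{E}'=(c-knX_+^{n-1})Y^2$ and play its coercivity as $X\to\infty$ against its monotonicity for $X>(c/kn)^{1/(n-1)}$, handling the finite-limit alternative by the sign of $Y'$ near $(X_*,0)$. This is sound and arguably cleaner (it also gives a quantitative lower bound on $Y$ at $X=1$ for free); the paper's graph-based argument has the advantage that the same computation is recycled verbatim in later sections such as Lemma \ref{lem.finit}.
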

\begin{proof}
In order to prove the first item, we look for a line of equation $Y=m(X-1)$ (and normal vector $\overline{n}=(m,-1)$), where $m>0$ is to be chosen. The direction of the flow of the system \eqref{PSsyst} across the segment of this line corresponding to $X\in[0,1]$ is given by the sign of the expression
\begin{equation}\label{flow.line}
F(X):=(X-1)\left[m^2-mc+mknX^{n-1}+\frac{X^p-X^q}{X-1}\right].
\end{equation}
Lemma \ref{lem.calc} ensures that
$$
\frac{X^p-X^q}{X-1}\leq\lim\limits_{X\to1}\frac{X^p-X^q}{X-1}=p-q, \quad X\in[0,1),
$$
hence, taking into account that $X\leq1$, we deduce from \eqref{flow.line} that
\begin{equation}\label{interm3}
F(X)\geq(X-1)[m^2-m(c-kn)+p-q], \quad X\in[0,1].
\end{equation}
Since $c\geq kn+2\sqrt{p-q}$, we deduce that there exists at least a value of $m>0$ such that $m^2-m(c-kn)+p-q\leq0$. Choosing such a value of $m$, we find from \eqref{interm3} that $F(X)\geq0$ for any $X\in[0,1]$. Moreover, the direction of the flow of the system \eqref{PSsyst} across the line segment $\{Y=0, X\in[0,1]\}$ is given by the sign of $X^q-X^p$, which is positive, and thus points into the first quadrant of the phase plane, while the direction of the flow of the system \eqref{PSsyst} across the negative half-line $\{X=0, Y<0\}$ is given by the sign of $Y$ and thus points into the negative half-space $\{X<0\}$. It follows that the triangle $\mathcal{T}$ limited by the axis $X=0$, $Y=0$ and the line $Y=m(X-1)$ with $m$ chosen as above is negatively invariant and thus the trajectory $l_0(c)$ entering $P_1$ from the interior of $\mathcal{T}$, as proved in Proposition \ref{prop.P1}, remains forever inside this triangle. Since there are no critical points inside $\mathcal{T}$ (and thus no limit cycles as well, by \cite[Theorem 5, Section 3.7]{Pe}), an application of the Poincar\'e-Bendixon's Theorem (see \cite[Section 3.7]{Pe}) ensures that the trajectory $l_0(c)$ comes from the critical point $P_2$, as desired.

\medskip

In order to prove the second item of the proposition, we consider the isocline $Y'=0$, that is, the curve of equation
\begin{equation}\label{isoY}
Y=\frac{X^p-X^q}{c-knX^{n-1}}, \quad X\in\left(0,\left(\frac{c}{kn}\right)^{1/(n-1)}\right),
\end{equation}
which is positive and increasing for $X>1$ and presents a vertical asymptote at $X=(c/kn)^{1/(n-1)}$. We have proved in Proposition \ref{prop.P1} that the trajectory $l_1(c)$ points inside the first quadrant, thus with $X'(\xi)>0$, $Y'(\xi)>0$ in a neighborhood $\xi\in(-\infty,\xi_0)$ for some $\xi_0>0$. Moreover, while it remains inside the first quadrant, we have $X'(\xi)=Y(\xi)>0$ and thus the coordinate $X$ increases. The inverse function theorem then allows us to express the trajectory as a graph $Y=Y(X)$, with derivative
\begin{equation}\label{interm4}
Y'(X)=\frac{cY(X)-knX^{n-1}Y(X)-X^p+X^q}{Y(X)}=c-knX^{n-1}-\frac{X^p-X^q}{Y(X)}.
\end{equation}
Assume for contradiction that $Y(X)$ has a vertical asymptote as $X\to X_0$ for some $X_0\in(0,\infty)$. Then
$$
\lim\limits_{X\to X_0}Y'(X)=c-knX_0^{n-1}\in\real,
$$
leading to a linear growth, which is a contradiction with the vertical asymptote. Thus, the orbit $l_1(c)$ is defined for any $X>0$ and thus crosses the isocline \eqref{isoY}, entering the region where $Y'<0$. Assume now for contradiction that $Y=Y(X)>0$ for any $X\in(0,\infty)$ along the trajectory $l_1(c)$. Since $X'(\xi)=Y(\xi)>0$, it follows that there exists
$$
L=\lim\limits_{X\to\infty}Y(X)\in[0,\infty).
$$
We go back to \eqref{interm4} and pass to the limit as $X\to\infty$ to get $\lim\limits_{X\to\infty}Y'(X)=-\infty$, which leads to an easy contradiction with the horizontal asymptote of the function $Y(X)$. This contradiction, together with the stability of the critical point $P_2$ in the range of $c$ under consideration, shows that there is $X_1(c)\in(1,\infty)$ such that $Y(X_1(c))=0$, completing the proof.
\end{proof}

\section{Monotonicity with respect to $c$}\label{sec.monot}

In this section, we prove that the trajectories $l_1(c)$ and $l_0(c)$ change in a monotone way with respect to the parameter $c$ (the speed of the traveling wave). To this end, let us recall from Propositions \ref{prop.P1}, \ref{prop.P1c0} and \ref{prop.P1q1} that, for any $c\in\real$, the trajectory $l_1(c)$ goes out from $P_1$ into the first quadrant $(0,\infty)^2$ of the phase plane, while the trajectory $l_0(c)$ enters $P_1$ arriving from the quadrant $(0,\infty)\times(-\infty,0)$ of the phase plane. Before intersecting first the axis $Y=0$, the trajectories $l_1(c)$ and $l_0(c)$ are monotone in $X$, more precisely, the $X$-coordinate increases on the trajectory $l_1(c)$ and the $X$-coordinate decreases on the trajectory $l_0(c)$. This fact, together with the inverse function theorem, allows us to express them as graphs of functions $Y=Y_{1,c}(X)$, respectively $Y=Y_{0,c}(X)$, while $Y>0$ (for $l_1(c)$) respectively $Y<0$ (for $l_0(c)$). Let us denote by $X_1(c)$ the first zero of $Y_{1,c}(X)$ (letting by convention $X_1(c)=\infty$ if the trajectory $l_1(c)$ is fully contained in the cone $(0,\infty)^2$). With this notation, we have the following monotonicity property:
\begin{proposition}\label{prop.monot1}
Let $n$, $p$, $q$ and $k$ be as in \eqref{range.exp} and let $c_1<c_2\in\real$. Then $1\leq X_1(c)<\infty$ for any $c\in\real$ and for any $X\in(0,X_1(c_1))$ we have $0<Y_{1,c_1}(X)<Y_{1,c_2}(X)$. Moreover, if $X_1(c_1)>1$, then $X_1(c_1)<X_1(c_2)$.
\end{proposition}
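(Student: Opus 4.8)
The plan is to compare the two vector fields governing the orbits $l_1(c_1)$ and $l_1(c_2)$ and exploit the fact that raising $c$ tilts the flow upward in the first quadrant. First I would recall that, by Propositions~\ref{prop.P1}, \ref{prop.P1c0} and \ref{prop.P1q1}, for every $c$ the orbit $l_1(c)$ leaves $P_1$ into the open first quadrant; if $q>1$ and $c>0$ it is tangent there to the eigendirection $e_2=(1,c)$, so in a neighborhood of the origin $Y_{1,c}(X)=cX+o(X)$, which already gives $Y_{1,c_1}(X)<Y_{1,c_2}(X)$ for small $X>0$. (In the cases $c\le 0$ or $c=0$ or $q=1$ one instead uses the explicit leading-order tails in Propositions~\ref{prop.P1c0} and \ref{prop.P1q1} to see that near the origin the two orbits are ordered the same way, or coincide to leading order; in the latter situation a second-order expansion of the invariant curve, driven by the $cX$ term in $Y'=\cdots$, again separates them with the $c_2$-orbit on top.) The finiteness $X_1(c)<\infty$ was essentially shown in the proof of Proposition~\ref{prop.large} for large $c$; for general $c$ one repeats that argument: while $Y>0$ the $X$-coordinate is increasing, so $l_1(c)$ is a graph $Y=Y_{1,c}(X)$ satisfying
\begin{equation}\label{eq:Yode}
\frac{dY_{1,c}}{dX}=c-knX^{n-1}-\frac{X^p-X^q}{Y_{1,c}(X)},
\end{equation}
and the same contradiction with a putative vertical or horizontal asymptote (using $X^p-X^q\to\infty$) forces the orbit to meet $\{Y=0\}$ at some finite $X_1(c)\ge 1$; it cannot be $<1$ since for $X\in(0,1)$ the right-hand side of the second equation of \eqref{PSsyst} is $>0$ when $Y\le 0$, so the orbit cannot cross into $Y\le 0$ there.

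The heart of the argument is the ordering on $(0,X_1(c_1))$. Suppose, for contradiction, that the two graphs touch: let $X_*\in(0,X_1(c_1))$ be the smallest value with $Y_{1,c_1}(X_*)=Y_{1,c_2}(X_*)=:Y_*>0$, with strict inequality $Y_{1,c_1}<Y_{1,c_2}$ on $(0,X_*)$. Plugging $X_*$ into \eqref{eq:Yode} for each of $c_1,c_2$ and subtracting, the terms $-knX_*^{n-1}$ and $-(X_*^p-X_*^q)/Y_*$ cancel, leaving
\begin{equation}\label{eq:cross}
\frac{dY_{1,c_2}}{dX}\Big|_{X_*}-\frac{dY_{1,c_1}}{dX}\Big|_{X_*}=c_2-c_1>0,
\end{equation}
which says the $c_2$-graph crosses the $c_1$-graph strictly from below to above at $X_*$ — contradicting the assumption that $Y_{1,c_2}\ge Y_{1,c_1}$ just to the left of $X_*$. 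Hence the graphs never meet on $(0,X_1(c_1))$ and the strict ordering $0<Y_{1,c_1}(X)<Y_{1,c_2}(X)$ propagates from the near-origin regime all the way to $X_1(c_1)$. In particular $Y_{1,c_2}(X_1(c_1))>Y_{1,c_1}(X_1(c_1))=0$, so $Y_{1,c_2}$ has not yet vanished at $X_1(c_1)$; since $Y_{1,c_2}$ is eventually forced to zero (finiteness of $X_1(c_2)$, just established), we get $X_1(c_1)<X_1(c_2)$ whenever $X_1(c_1)>1$. When $X_1(c_1)=1$ the orbit $l_1(c_1)$ enters $P_2$ and there is nothing to compare beyond that; the statement then only asserts $X_1(c_2)\ge 1$, which is immediate.

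The step I expect to be the main obstacle is making the comparison rigorous near the singular point $P_1$, where the ODE \eqref{eq:Yode} is genuinely singular (the term $(X^p-X^q)/Y$ blows up as $Y\to0$, and $P_1$ is non-hyperbolic for $q>1$) and where the uniqueness of $l_1(c)$ itself rests on the center/unstable manifold theory invoked in Section~\ref{sec.syst}. One must check that the leading-order asymptotics of $Y_{1,c}(X)$ at $X\to 0^+$ depend on $c$ in a way that gives the desired initial ordering (easy when $c>0$ via the tangent direction $e_2=(1,c)$; more delicate when the leading coefficient does not see $c$, where the ordering has to be read off from the next-order term), and that the solution of \eqref{eq:Yode} is uniquely determined by this asymptotic behavior so that the crossing argument applies. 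Once the flow is desingularized near the origin — exactly as is done for Proposition~\ref{prop.P1c0} in the Appendix — the crossing inequality \eqref{eq:cross} does all the remaining work, and I would borrow the relevant desingularization estimates from \cite{ILS25} as the paper announces.
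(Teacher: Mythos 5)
Your overall strategy coincides with the paper's: establish the ordering of $l_1(c_1)$ and $l_1(c_2)$ near $P_1$ from the $c$-dependence of the tangent directions and leading-order tails, propagate it by a comparison argument on the graph ODE $dY/dX=c-knX^{n-1}-(X^p-X^q)/Y$ (the paper phrases this as transversality of the $c_1$-flow across the $c_2$-orbit, which yields the flux $(c_2-c_1)Y_{1,c_2}(X)>0$ --- algebraically the same computation as your first-touching-point argument), and then deduce the behavior of $X_1(\cdot)$. Your route to the finiteness of $X_1(c)$ (repeating the asymptote argument of Proposition \ref{prop.large} for every $c$) differs harmlessly from the paper's, which instead obtains $X_1(c_1)\le X_1(c_2)<\infty$ from the just-proved ordering combined with Proposition \ref{prop.large} for $c_2$ large.

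There is, however, one genuine gap, in the last step. From $0<Y_{1,c_1}(X)<Y_{1,c_2}(X)$ on the \emph{open} interval $(0,X_1(c_1))$ you conclude $Y_{1,c_2}(X_1(c_1))>Y_{1,c_1}(X_1(c_1))=0$. Continuity only gives $Y_{1,c_2}(X_1(c_1))\ge 0$: the a priori possible degenerate case is that both orbits reach the axis at the same abscissa $X_0:=X_1(c_1)=X_1(c_2)>1$. Your first-touching argument does not exclude this, because it requires $Y_*>0$ at the touching point (the term $(X^p-X^q)/Y$ in the graph ODE is singular at $Y=0$, so the two slopes need not differ by $c_2-c_1$ there). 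This case requires a separate argument, which the paper supplies in \eqref{interm13}: for $X\in(X_0-\delta,X_0)$ with $X>1$ one has $X^q<X^p$ and $0<Y_{1,c_1}(X)<Y_{1,c_2}(X)$, so
$$
\left(\frac{dY_{1,c_2}}{dX}-\frac{dY_{1,c_1}}{dX}\right)(X)=c_2-c_1+\frac{(X^q-X^p)\left(Y_{1,c_1}(X)-Y_{1,c_2}(X)\right)}{Y_{1,c_1}(X)Y_{1,c_2}(X)}>0,
$$
hence $Y_{1,c_2}-Y_{1,c_1}$ is increasing on $(X_0-\delta,X_0)$ and cannot tend to $0$ at $X_0$. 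Note that this is exactly where the hypothesis $X_1(c_1)>1$ enters (it fixes the sign of $X^q-X^p$ near $X_0$); your write-up never uses that hypothesis, which is a signal that the degenerate case was not addressed. With this addition your proof is complete.
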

\begin{proof}
We infer from Proposition \ref{prop.P1} that, if $c\neq0$ and $q>1$, the trajectory $l_1(c)$ enters the first quadrant $(0,\infty)^2$ of the phase plane associated to the system \eqref{PSsyst} tangent either to the vector $(1,c)$ for $c>0$ or to the curve (first approximation of the center manifold) $Y=-X^q/c$ if $c<0$, while for $c=0$ the trajectory $l_1(0)$ lies between the trajectory corresponding to $c>0$ and the trajectory corresponding to $c<0$ in any of the three cases stated in Proposition \ref{prop.P1c0}. Thus, the monotonicity with respect to $c$ follows locally in a neighborhood of the critical point $P_1$. The same local monotonicity holds true when $q=1$ according to Proposition \ref{prop.P1q1}.

Let now $c_1<c_2\in\real$. The flow of the system \eqref{PSsyst} with parameter $c_1$ across the trajectory
$$
l(c_2)=(X,Y_{1,c_2}(X)), \quad {\rm with \ normal \ vector} \quad \overline{n}=\left(\frac{dY_{1,c_2}}{dX}(X),-1\right)
$$
is given by the sign of the expression
\begin{equation}\label{interm12}
F(X):=Y_{1,c_2}(X)Y_{1,c_2}'(X)-\left(c_1Y_{1,c_2}(X)-knX^{n-1}Y_{1,c_2}(X)-X^p+X^q\right).
\end{equation}
The inverse function theorem gives
$$
Y_{1,c_2}'(X)=\frac{c_2Y_{1,c_2}(X)-knX^{n-1}Y_{1,c_2}(X)-X^p+X^q}{Y_{1,c_2}(X)}, \quad X\in(0,X_1(c_2)),
$$
and by replacing the previous expression into \eqref{interm12}, we deduce that
$$
F(X)=(c_2-c_1)Y_{1,c_2}(X)>0, \quad X\in(0,X_1(c_2)).
$$
We thus infer that, for any $X\in(0,X_1(c_2))$, we have $Y_{1,c_1}(X)<Y_{1,c_2}(X)$. Since $X_1(c_2)<\infty$ for $c_2$ sufficiently large, as established in Proposition \ref{prop.large}, it follows that $X_1(c_1)\leq X_1(c_2)<\infty$ for any $c_1<c_2$ and thus $X_1(c)<\infty$ for any $c\in\real$. The fact that $X_1(c)\geq1$ is obvious due to the direction of the flow of the system \eqref{PSsyst} across the axis $Y=0$.

We are only left to prove that $X_1(c_1)<X_1(c_2)$ whenever $X_1(c_1)>1$. Assume for contradiction that $X_1(c_1)=X_1(c_2)=:X_0>1$ for some $c_1<c_2\in\real$. Let $\delta\in(0,X_0-1)$. We pick any $X\in(X_0-\delta,X_0)$ and we apply the inverse function theorem to compute
\begin{equation}\label{interm13}
\begin{split}
\left(\frac{dY_{1,c_2}}{dX}-\frac{dY_{1,c_1}}{dX}\right)(X)&=c_2-c_1+\frac{X^q-X^p}{Y_{1,c_2}(X)}-\frac{X^q-X^p}{Y_{1,c_1}(X)}\\
&=c_2-c_1+\frac{(X^q-X^p)(Y_{1,c_1}(X)-Y_{1,c_2}(X))}{Y_{1,c_1}(X)Y_{1,c_2}(X)}>0.
\end{split}
\end{equation}
The positivity in \eqref{interm13} follows, on the one hand, from the fact that $c_2>c_1$ and, on the other hand, from the first part of the proof, which implies that $0<Y_{1,c_1}(X)<Y_{1,c_2}(X)$ if $X\in(0,X_0)$, and the choice of $\delta$ which gives $X>X_0-\delta>1$ and thus $X^q<X^p$. We infer from \eqref{interm13} that the function $X\mapsto Y_{1,c_2}(X)-Y_{1,c_1}(X)$ increases as $X\to X_0$, contradicting that its value is assumed to be equal to zero at $X=X_0$. We have thus proved the strict monotonicity of $X_1(c)$ with respect to $c$, as stated.
\end{proof}
Let us denote by $X_0(c)$ the last zero of $Y_{0,c}(X)$ before reaching the critical point $P_1$ (letting by convention $X_0(c)=\infty$ if the trajectory $l_0(c)$ is fully contained in the cone $(0,\infty)\times(-\infty,0)$). A monotonicity result similar to Proposition \ref{prop.monot1} holds true for the trajectory $l_0(c)$, except for the finiteness of $X_0(c)$.
\begin{proposition}\label{prop.monot0}
Let $n$, $p$, $q$ and $k$ be as in \eqref{range.exp} and let $c_1<c_2\in\real$. Then $1\leq X_0(c)\leq\infty$ for any $c\in\real$ and for any $X\in(0,X_0(c_1))$ we have $Y_{1,c_1}(X)<Y_{1,c_2}(X)<0$. Moreover, if $X_0(c_2)\in(1,\infty)$, then $X_0(c_2)<X_0(c_1)$.
\end{proposition}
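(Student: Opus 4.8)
The plan is to follow the proof of Proposition~\ref{prop.monot1} essentially word for word, carrying the sign changes produced by the fact that $l_0(c_1)$ and $l_0(c_2)$ now live in the quadrant $(0,\infty)\times(-\infty,0)$; these changes are exactly what reverse the $c$-monotonicity of $X_0(c)$ with respect to that of $X_1(c)$ and what account for the absence of a uniform finiteness bound.

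First I would establish the local ordering near $P_1$, using the stable/center part of the local analysis. For $c<0$ the orbit $l_0(c)$ reaches $P_1$ tangent to $(1,c)$ (Proposition~\ref{prop.P1}(b)); for $c>0$ it runs into $P_1$ along the center manifold $Y=-X^q/c+o(X^q)$ (Proposition~\ref{prop.P1}(a)); for $q=1$ it is tangent to $(1,\lambda_2)$, with $\lambda_2$ increasing in $c$ (Proposition~\ref{prop.P1q1}); and $l_0(0)$ lies between the $c>0$ and $c<0$ families by the blow-up analysis of Proposition~\ref{prop.P1c0}. In each regime these curves move upward (towards $\{Y=0\}$) in the quadrant $(0,\infty)\times(-\infty,0)$ as $c$ increases, so $Y_{0,c_1}(X)<Y_{0,c_2}(X)<0$ for $X>0$ small.

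Next, the global comparison. Representing $l_0(c_2)$ as a graph $Y=Y_{0,c_2}(X)$ with normal $\big(Y_{0,c_2}'(X),-1\big)$ and inserting the inverse-function-theorem identity for $Y_{0,c_2}'$, the flow of the system~\eqref{PSsyst} with parameter $c_1$ across $l_0(c_2)$ is governed by
$$
F(X)=(c_2-c_1)\,Y_{0,c_2}(X)<0,
$$
the strict sign now coming from $Y_{0,c_2}<0$ --- this is the only place the computation departs from Proposition~\ref{prop.monot1}, and the flip of sign is precisely what flips the monotonicity of $X_0$. Combined with the local ordering, this constant sign forbids $l_0(c_1)$ from crossing $l_0(c_2)$, so $Y_{0,c_1}(X)<Y_{0,c_2}(X)<0$ on the whole common interval of definition; a short continuity argument at the right endpoint of that interval then forces $X_0(c_1)\ge X_0(c_2)$. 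The bound $X_0(c)\ge1$ comes from the direction of the flow across $\{Y=0\}$: at a last zero one has $Y'=X_0(c)^q-X_0(c)^p\le0$, hence $X_0(c)\ge1$; and $X_0(c)=\infty$ is genuinely admissible, since it holds whenever $c\ge kn+2\sqrt{p-q}$ by Proposition~\ref{prop.large}.

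Finally, the strict inequality. Assuming for contradiction that $X_0(c_1)=X_0(c_2)=:X_0\in(1,\infty)$, I would compute, for $X$ slightly below $X_0$,
$$
\Big(\tfrac{dY_{0,c_2}}{dX}-\tfrac{dY_{0,c_1}}{dX}\Big)(X)=c_2-c_1+\frac{(X^q-X^p)\big(Y_{0,c_1}(X)-Y_{0,c_2}(X)\big)}{Y_{0,c_1}(X)\,Y_{0,c_2}(X)}>0,
$$
since $c_2>c_1$, $X^q-X^p<0$ for $X>1$, $Y_{0,c_1}-Y_{0,c_2}<0$ and $Y_{0,c_1}Y_{0,c_2}>0$; hence the positive quantity $Y_{0,c_2}-Y_{0,c_1}$ is increasing as $X\to X_0^-$, contradicting that it vanishes at $X_0$. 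Therefore $X_0(c_2)<X_0(c_1)$ whenever $X_0(c_2)\in(1,\infty)$. I expect the only steps needing real care to be the local one at $c=0$, where $P_1$ is fully degenerate and the position of $l_0(0)$ relative to the $c>0$ and $c<0$ families has to be read off from the blow-up analysis of Proposition~\ref{prop.P1c0}, together with the sign bookkeeping in the lower half-plane (which, as noted, makes $X_0$ decrease rather than increase in $c$); the rest transcribes verbatim from the proof of Proposition~\ref{prop.monot1}.
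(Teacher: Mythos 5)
Your proof is correct and follows exactly the route the paper intends: the paper gives no separate argument for this proposition, stating only that it is ``completely similar'' to Proposition~\ref{prop.monot1}, and your sign bookkeeping (the flow quantity $F(X)=(c_2-c_1)Y_{0,c_2}(X)<0$, the reversed monotonicity of $X_0$, and the derivative comparison near a putative common zero $X_0>1$) is the right transcription. One small inaccuracy in a side remark: for $c\geq kn+2\sqrt{p-q}$ the paper's own convention (see the Remark following the proposition) is that $X_0(c)=1$, not $\infty$, since $l_0(c)$ emanates from the critical point $P_2=(1,0)$ and so $Y_{0,c}(X)\to0$ as $X\to1$; the value $\infty$ is reserved for trajectories that never limit onto the $X$-axis. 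This does not affect the claims of the proposition, but it matters later when $X_1(c)-X_0(c)$ is compared in the proof of Proposition~\ref{prop.hom}.
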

The proof is completely similar to the one of Proposition \ref{prop.monot1} and is left to the reader.

\medskip

\noindent \textbf{Remark.} We cannot ensure that $X_0(c)<\infty$ for any $c\in\real$. However, we have proved in Proposition \ref{prop.large} that $X_0(c)=1$ for any $c\in[kn+2\sqrt{p-q},\infty)$, while $X_1(c)>1$ in the same range of $c$.

\section{Phase portrait for $c\in(kn,kn+2\sqrt{p-q})$}\label{sec.nocycle}

In this section, we consider the following range in the analysis of the phase plane, limited from below by the bifurcation speed $c=kn$, under the assumption that $n<p+q+1$. In order to describe with precision the phase portrait in this range, the most difficult point in the analysis is to prove the \emph{non-existence of a limit cycle} inside this range. To this end, we employ a criteria for non-existence of limit cycles for Lienard systems. Before getting into the specific dynamical systems techniques related to this analysis, we prove a preparatory lemma which is the core of the argument.
\begin{lemma}\label{lem.nocycle}
Assume that $n$, $p$, $q$ and $k$ are as in \eqref{range.exp} and that $1<n\leq p+q+1$. Then, if $c\geq kn$, the system of equations
\begin{equation}\label{basic.syst}
\left\{\begin{array}{ll}y^n-x^n=\frac{c}{k}(y-x) \\ \frac{y^{p+1}}{p+1}-\frac{x^{p+1}}{p+1}=\frac{y^{q+1}}{q+1}-\frac{x^{q+1}}{q+1},\end{array}\right.
\end{equation}
has no solution $(x,y)$ with $x\in(0,1)$ and $y\in(1,\infty)$.
\end{lemma}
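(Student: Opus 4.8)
The plan is to recast the two equations of \eqref{basic.syst} as conditions on the moments $I_r:=\int_x^y t^r\,dt=\dfrac{y^{r+1}-x^{r+1}}{r+1}$ of the interval $[x,y]$, and to play them against each other by convexity, the only delicate point being a second-order comparison where the hypothesis $n\le p+q+1$ is really used. First I would reformulate. Suppose $0<x<1<y$ solves \eqref{basic.syst} for some $c\ge kn$; dividing the first equation by $y-x>0$ and using $c\ge kn$ gives $\frac{y^n-x^n}{y-x}\ge n$, that is $I_{n-1}\ge I_0$, while the second equation is exactly $I_p=I_q$. I shall contradict this by proving $I_{n-1}<I_0$.

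Since $I_r''=\int_x^y t^r(\ln t)^2\,dt>0$, the map $r\mapsto I_r$ is smooth and strictly convex on $\real$. Hence $I_p=I_q$ (with $q<p$) forces the minimiser $r^\ast$ of $I$ to lie in $(q,p)$; consequently $I$ is strictly decreasing on $(-\infty,r^\ast]$ and strictly increasing on $[r^\ast,\infty)$, $I_r<I_q$ for $r\in(q,p)$, and $I_0>I_q$ (as $0<q<r^\ast$). Now I would split according to where $n-1>0$ lies. If $0<n-1\le q$, then $I_{n-1}<I_0$ by the strict decrease of $I$ on $[0,q]$. If $q<n-1\le p$, then $I_{n-1}\le I_q<I_0$. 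The remaining case, $p<n-1\le p+q$, occurs precisely when $n>p+1$, and this is where $n\le p+q+1$ enters --- solely to guarantee $n-1\le p+q$; there $n-1>p>r^\ast$, so $I_{n-1}\le I_{p+q}$ by monotonicity, and it remains to prove the \emph{key inequality}
$$
I_{p+q}<I_0\qquad\text{whenever }0<x<1<y\text{ and }I_p=I_q,
$$
which no longer mentions $n$. (For $n\le p+1$ one may instead use the pointwise bound $t^p-t^q\ge\frac{p-q}{n-1}(t^{n-1}-1)$, valid exactly in that range; it is precisely its breakdown for $p+1<n\le p+q+1$ that necessitates the argument below.)

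To prove the key inequality I would put $a=\ln x<0<b=\ln y$, $\mu=\frac{a+b}{2}$, $\sigma=\frac{b-a}{2}>0$ and $K(\lambda):=\int_a^b e^{\lambda s}\,ds$, so that $I_r=K(r+1)$ and, by $s=\mu+\sigma v$, $K(\lambda)=2\sigma e^{\lambda\mu}\frac{\sinh(\lambda\sigma)}{\lambda\sigma}$; hence $\log K(\lambda)=\mathrm{const}+\lambda\mu+g(\lambda\sigma)$ with $g(z):=\log\frac{\sinh z}{z}$. The relation $K(q+1)=K(p+1)$ then pins $\mu=-\big[g((p+1)\sigma)-g((q+1)\sigma)\big]/(p-q)$, and inserting this, the desired $K(p+q+1)<K(1)$ becomes
$$
\frac{g((p+1)\sigma)-g((q+1)\sigma)}{p-q}>\frac{g((p+q+1)\sigma)-g(\sigma)}{p+q}:
$$
the mean slope of $g(\sigma\,\cdot)$ over $[q+1,p+1]$ must exceed that over the concentric interval $[1,p+q+1]$, both centred at $\frac{p+q+2}{2}$. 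I would derive this from two elementary facts on $(0,\infty)$: $g''(z)=z^{-2}-\sinh^{-2}z>0$ (since $\sinh z>z$), and $g'''(z)<0$, equivalently $\sinh^3z>z^3\cosh z$ (a short one-variable exercise: reduce to $w\cosh w+2w-3\sinh w>0$ for $w>0$, whose value and first two derivatives vanish at $0$ while the third is $w\sinh w$). Indeed, with $c_0=\frac{p+q+2}{2}$ and $M(s):=\big[g(\sigma(c_0+s))-g(\sigma(c_0-s))\big]/(2s)$, the sign of $M'(s)$ is that of $s\Lambda'(s)-\Lambda(s)$, where $\Lambda(s):=g(\sigma(c_0+s))-g(\sigma(c_0-s))$; this vanishes at $s=0$ and has derivative $s\Lambda''(s)=\sigma^2 s\big(g''(\sigma(c_0+s))-g''(\sigma(c_0-s))\big)<0$ for $0<s\le\frac{p+q}{2}$ (so $c_0-s\ge1>0$), because $g''$ strictly decreases. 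Thus $M$ strictly decreases, and since $\frac{p-q}{2}<\frac{p+q}{2}$ the displayed inequality follows; hence $I_{n-1}\le I_{p+q}<I_0$, contradicting the first step.

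The step I expect to be the main obstacle is the key inequality: the first-order data --- convexity of $r\mapsto I_r$, or equivalently pointwise estimates comparing $t^p-t^q$ with $t^{n-1}-1$ --- are genuinely too weak once $n>p+1$, and one must pass to the second-order fact $g'''<0$ (that is, $\sinh^3z>z^3\cosh z$). The rest is routine convexity bookkeeping.
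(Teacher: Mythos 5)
Your argument is correct, and it is genuinely different from the one in the paper. You recast both equations as statements about the moments $I_r=\int_x^y t^r\,dt$: the convection equation with $c\ge kn$ becomes $I_{n-1}\ge I_0$, the second equation becomes $I_p=I_q$, and strict convexity of $r\mapsto I_r$ disposes of all cases except $p<n-1\le p+q$ (which is exactly where $n\le p+q+1$ enters), the residual case being the $n$-free inequality $I_{p+q}<I_0$, which you prove by writing $\log I_r$ in terms of $g(z)=\log(\sinh z/z)$ and comparing mean slopes over two concentric intervals via $g''>0$ and $g'''<0$ (the latter reducing to $w\cosh w+2w-3\sinh w>0$, which is sound). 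I checked the bookkeeping — the midpoint coincidence of $[q+1,p+1]$ and $[1,p+q+1]$, the constraint $c_0-s\ge1$ keeping the arguments positive, and the sign analysis of $s\Lambda'(s)-\Lambda(s)$ — and it all holds. The paper instead treats the two equations as defining implicit curves $y_1(x)$, $y_2(x)$ on $(0,1)$, orders them at $x=0$, kills the extreme case $n=p+q+1$, $c=kn$ by the injectivity of $\phi(t)=\frac{t^{p+q+1}}{p+q+1}-\frac{t^{p+1}}{p+1}+\frac{t^{q+1}}{q+1}-t$ (whose derivative factors as $(t^p-1)(t^q-1)\ge0$), and then propagates the strict ordering by ODE-comparison monotonicity in $n$ and in $c$. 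The paper's route buys a very short algebraic core but needs two comparison arguments in parameters; your route is more uniform (one convexity framework handles all of $1<n\le p+q+1$ and all $c\ge kn$ simultaneously, with no comparison in $n$ or $c$), at the price of the second-order fact $g'''<0$. Note also that your moment identity $\phi(y)-\phi(x)=\frac{p+q}{p+q+1}\big(I_{p+q}\cdots\big)$-type bookkeeping in the boundary case recovers the paper's Step 2, so the two proofs are consistent where they overlap. The only cosmetic caveat is your parenthetical about the pointwise bound $t^p-t^q\ge\frac{p-q}{n-1}(t^{n-1}-1)$ for $n\le p+1$: it is not needed for the proof and its range of validity would require its own verification, so I would either justify or drop it.
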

\begin{proof}
In the first part of the proof, we let exactly $c=kn$ and we show that the system \eqref{basic.syst} has no solutions. The implicit function theorem readily gives that the two equations in the system \eqref{basic.syst} define implicitly two functions $y_1(x)$ and $y_2(x)$ for $x\in[0,1)$, different from the obvious $y_1(x)=y_2(x)=x$, starting with the values obtained by letting $x=0$ in \eqref{basic.syst}
\begin{equation}\label{interm5}
y_1(0)=n^{1/(n-1)}>1, \quad y_2(0)=\left(\frac{p+1}{q+1}\right)^{1/(p-q)}>1
\end{equation}
and such that $y_1(1)=y_2(1)=1$, with a decreasing profile for $x\in(0,1)$. The strategy of the proof is to show that the two functions are always strictly ordered (and thus cannot intersect) for $x\in[0,1)$. We split the proof into several steps, for the easiness of the presentation.

\medskip

\noindent \textbf{Step 1. Order at $x=0$.} We prove in this first step that, if $1<n\leq p+q+1$, then $y_1(0)>y_2(0)$. Recalling \eqref{interm5}, we show first that $y_1(0)$ is a non-increasing function of $n$. Indeed, observing that
$$
y_1(0)=n^{1/(n-1)}=e^{g(n)}, \quad g(x)=\frac{\ln\,x}{x-1},
$$
we deduce by direct calculation that
$$
g'(x)=\frac{1}{(x-1)^2}\left[1-\ln\,x-\frac{1}{x}\right]=\frac{1}{(x-1)^2}\overline{g}(x),
$$
with
$$
\overline{g}'(x)=\frac{1-x}{x^2}<0, \quad x\in(1,\infty),
$$
whence $\overline{g}(x)<\overline{g}(1)=0$. It follows that $g'(x)<0$ for any $x>1$, whence $g$ is decreasing and thus $y_1(0)$ is decreasing as a function of $n$. We infer thus that
\begin{equation}\label{interm6}
y_1(0)\geq(p+q+1)^{1/(p+q)}, \quad n\in(1,p+q+1].
\end{equation}
In view of \eqref{interm6} and the previous arguments, it remains to prove that
\begin{equation}\label{interm7}
p+q+1>\left(\frac{p+1}{q+1}\right)^{(p+q)/(p-q)}, \quad p>q\geq1.
\end{equation}
To this end, we take logarithms in \eqref{interm7} and set $a:=p+q$, $b:=p-q$, thus $a>b>0$. The inequality \eqref{interm7} writes equivalently as
\begin{equation}\label{interm8}
\frac{\ln(a+1)}{a}>\frac{1}{b}\ln\left(1+\frac{2b}{a-b+2}\right),
\end{equation}
and, letting further $d=1/b\in(1/a,\infty)$, we obtain after straightforward manipulations that
$$
\frac{1}{b}\ln\left(1+\frac{2b}{a-b+2}\right)=\frac{1}{a+2}h((a+2)d), \quad h(x)=x\ln\frac{x+1}{x-1}.
$$
It is an easy exercise in basic calculus to prove that $h$ is a non-increasing function in $(1,\infty)$ (we omit the details consisting only in computing the derivative of $h$ up to second order) and, since $x=(a+2)d=(a+2)/b$ and $b\in(0,a)$, we obtain that the right-hand side of \eqref{interm8} is a non-decreasing function of $b\in(0,a)$. Noticing that, for $b=a$, we get an equality in \eqref{interm8}, the inequality \eqref{interm8} (and thus \eqref{interm7}) follows, completing the proof of this step.

\medskip

\noindent \textbf{Step 2. No solution for $n=p+q+1$.} Letting now $n=p+q+1$, and assuming for contradiction that there is a solution $(x,y)$ to \eqref{basic.syst} with $y\neq x$, we sum up the two equations to deduce that
$$
\phi(y)=\phi(x), \quad \phi(t):=\frac{t^{p+q+1}}{p+q+1}-\frac{t^{p+1}}{p+1}+\frac{t^{q+1}}{q+1}-t.
$$
Noticing that
$$
\phi'(t)=(t^p-1)(t^q-1)\geq0, \quad t\in(0,\infty),
$$
it follows that $\phi$ is an injective function and thus $\phi(y)=\phi(x)$ implies $y=x$, and a contradiction with the starting assumption of this step. In particular, we infer from Step 1 and Step 2 that $y_1(x)>y_2(x)$ for $x\in(0,1)$ if $n=p+q+1$.

\medskip

\noindent \textbf{Step 3. Monotonicity with respect to $n$.} Let $n\in(1,p+q+1)$. Since $y_2(x)$ does not depend on $n$, we want to check how $y_1$ changes with $n$, for $x\in(0,1)$, in order to complete the proof of the lemma. We thus compute the derivative of $y_1$:
\begin{equation}\label{interm9}
\begin{split}
y_1'(x)&=\frac{x^{n-1}-1}{y_1^{n-1}(x)-1}=\frac{x^n-x}{y_1^n(x)-y_1(x)}\frac{y_1(x)}{x}\\
&=\frac{y_1(x)^n-n(y_1(x)-x)-x}{y_1^n(x)-y_1(x)}\frac{y_1(x)}{x}\\&=\left[1-\frac{n-1}{y_1(x)^{n-1}-1}\frac{y_1(x)-x}{y_1(x)}\right]\frac{y_1(x)}{x}.
\end{split}
\end{equation}
We notice that, in the previous calculation, the variation with respect to $n$ depends on the mapping
$$
l(t):=\frac{t}{y^t-1}, \quad t=n-1\in[1,p+q), \quad y>1.
$$
Since
$$
l'(t)=\frac{y^t}{(y^t-1)^2}h(t), \quad h(t)=1-t\ln\,t-y^{-t}, \quad h'(t)=(y^{-t}-1)\ln\,y,
$$
we deduce from the fact that $y>1$ that $h$ is a decreasing function, whence $h(t)<h(0)=0$ for $t>0$. Thus, $l'(t)<0$ and thus $l$ is decreasing on $(0,\infty)$. Since $y_1(x)>1>x$ and $y_1'(x)<0$, it follows that the term in brackets in the right-hand side of \eqref{interm9} is negative and thus the right-hand side of \eqref{interm9} (seen as a function of $n$ for independent $y$ and $x$) is decreasing with $n$. Since $y_1(0)$ is decreasing with $n$ as well (as shown in Step 1), a comparison argument in the differential equation \eqref{interm9} shows that $y_1(x)$ is decreasing with $n$ for $x\in(0,1)$. This fact, together with the conclusion of Step 2, ensures that $y_1(x)>y_2(x)$ for any $x\in(0,1)$ and any $n\in(1,p+q+1]$, completing the proof in the limiting case $c=kn$.

\medskip

\noindent \textbf{Step 4. End of the proof.} We consider next the system \eqref{basic.syst} with $c>kn$ and observe again that, if we let $x=0$ in the first equation, we get (with the same notation for $y_1(x)$ and $y_2(x)$) that
$$
y_1(0)=\left(\frac{c}{k}\right)^{1/(n-1)}>n^{1/(n-1)}>(p+q+1)^{1/(p+q)}>y_2(0).
$$
We are thus in a similar situation as at the end of Step 1, and we proceed as in Step 3 by computing the derivative of the implicit function
\begin{equation}\label{interm10}
y_1'(x)=\frac{knx^{n-1}-c}{kny_1^{n-1}(x)-c}, \quad x\in(0,1),
\end{equation}
and observing that the right-hand side is an increasing function with respect to $c$ (for $n$ fixed), since $y_1(x)>1>x$. A comparison argument in the one-parameter family of differential equations \eqref{interm10} (depending on the parameter $c$) entails that $y_1(x)$ is increasing with $c$ as $x\in(0,1)$. Since Step 3 ensures that, for $c=kn$, we already have $y_1(x)>y_2(x)$, and $y_2$ does not depend on $c$, the previous monotonicity implies that the inequality remains true with the implicit function corresponding to any parameter $c>kn$, completing the proof.
\end{proof}
We next observe that Eq. \eqref{TWODE} can be written in the form of a Lienard system by letting
$$
X(\xi)=f(\xi), \quad \overline{Y}(\xi)=f'(\xi)-cf(\xi)+kf^n(\xi),
$$
and obtaining the alternative dynamical system
\begin{equation}\label{Lienardsyst}
\left\{\begin{array}{ll}X'=\overline{Y}+cX-kX^n, \\ \overline{Y}'=X^q-X^p,\end{array}\right.
\end{equation}
and we easily notice that the critical point $P_2$ from the system \eqref{PSsyst} is topologically equivalent to the critical point $P_2'=(1,k-c)$ in the system \eqref{Lienardsyst}. We translate the critical point $P_2'$ to the origin by setting
$$
X=\widetilde{X}+1, \quad \overline{Y}=\widetilde{Y}+k-c,
$$
and the system \eqref{Lienardsyst} is transformed into the system
\begin{equation}\label{Lienardsyst2}
\left\{\begin{array}{ll}\widetilde{X}'=\widetilde{Y}+c\widetilde{X}+k-k(1+\widetilde{X})^n,\\ \widetilde{Y}'=(1+\widetilde{X})^q-(1+\widetilde{X})^p.\end{array}\right.
\end{equation}
We are interested in the range $\widetilde{X}\in(-1,\infty)$. We have the following result:
\begin{lemma}\label{lem.nocycle2}
Let $n$, $p$, $q$ and $k$ as in \eqref{range.exp} such that $n<p+q+1$ and $c\in[kn,kn+2\sqrt{p-q})$. Then the system \eqref{PSsyst} has no limit cycles.
\end{lemma}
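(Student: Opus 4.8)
The plan is to transfer the problem to the Li\'enard form introduced just above and then invoke a non-existence criterion for limit cycles whose only nontrivial input is Lemma~\ref{lem.nocycle}. First I would record that the map $(X,Y)\mapsto(X,\overline{Y})$ with $\overline{Y}=Y-cX+kX^n$ is a diffeomorphism of $\real^2$ carrying the trajectories of \eqref{PSsyst} to those of \eqref{Lienardsyst} and sending $P_2$ to $P_2'$, so it preserves limit cycles; hence it suffices to rule out limit cycles of \eqref{Lienardsyst}. Writing \eqref{Lienardsyst} as $X'=\overline{Y}-F(X)$, $\overline{Y}'=-g(X)$ with $F(X)=kX^n-cX$ and $g(X)=X^p-X^q$, one has $g(X)(X-1)>0$ on $(0,\infty)\setminus\{1\}$, so $G(X):=\int_1^X g=\frac{X^{p+1}}{p+1}-\frac{X^{q+1}}{q+1}+\mathrm{const}$ is a potential well with minimum at $X=1$. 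Since $P_1=(0,0)$ sits on $\{X=0\}$ and the flow on $\{X<0\}$ is the linear flow $X'=Y$, $Y'=cY$ (no periodic orbits, and for $c>0$ the nontrivial orbits leave $\{X<0\}$), any limit cycle of \eqref{PSsyst} lies in $\{X>0\}$ and, being a simple closed curve, must enclose the only equilibrium there, namely $P_2$ (resp. $P_2'$ in \eqref{Lienardsyst}).

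Next I would apply a Li\'enard non-existence criterion, in the circle of ideas borrowed from \cite{ILS25}. Its mechanism here is the following. A closed orbit around $P_2'$ has a lowest point $(1,\overline{Y}_{\min})$ and a highest point $(1,\overline{Y}_{\max})$ on the waist $\{X=1\}$ (because $\overline{Y}'=-g(X)>0$ for $0<X<1$ and $<0$ for $X>1$), and each of its two lateral branches $\{X<1\}$, $\{X>1\}$ is a strict graph over $\overline{Y}\in[\overline{Y}_{\min},\overline{Y}_{\max}]$. Re-parametrising each branch by the energy level $z=G(X)-G(1)\ge0$, with inverses $x_\ell(z)\in(0,1)$ and $x_r(z)\in(1,\infty)$ satisfying $G(x_\ell(z))=G(x_r(z))$, one computes that along the left branch $z$, as a function of $\overline{Y}$, solves $\frac{dz}{d\overline{Y}}=F(x_\ell(z))-\overline{Y}$ and along the right branch $\frac{dz}{d\overline{Y}}=F(x_r(z))-\overline{Y}$, both with $z(\overline{Y}_{\min})=z(\overline{Y}_{\max})=0$. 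Now Lemma~\ref{lem.nocycle} (applicable since $c\ge kn$ and $1<n<p+q+1$) asserts precisely that \eqref{basic.syst} has no solution with $x\in(0,1)$, $y\in(1,\infty)$, i.e. $F(x_\ell(z))\neq F(x_r(z))$ for every admissible $z>0$; since $F(x_\ell(0))=F(x_r(0))=F(1)$ and, near the waist, $F(x_\ell(z))-F(x_r(z))=2(c-kn)\sqrt{2z/(p-q)}+o(\sqrt{z})$ as $z\to0^+$ (so this is $>0$ for $c>kn$, and for $c=kn$ the same constant sign follows by continuity in $c$, or from the strict ordering $y_1>y_2$ established inside the proof of Lemma~\ref{lem.nocycle} together with the monotonicity of $F$ on $(1,\infty)$), the difference $F(x_\ell(z))-F(x_r(z))$ keeps a constant strict sign on the whole admissible range — dynamically, every orbit near $P_2'$ spirals out monotonically. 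A comparison argument for the two scalar ODEs then closes the loop: using that $F\circ x_\ell$ is increasing in $z$, the left-branch solution started from $z=0$ at $\overline{Y}_{\min}$ stays strictly above the right-branch one for $\overline{Y}$ beyond $\overline{Y}_{\min}$, so the two branches cannot both return to $z=0$ at the same height $\overline{Y}_{\max}$, contradicting the existence of the closed orbit. Pulling this back through the diffeomorphism yields the statement; and since $q$ enters only through $g$ and $G$, the cases $q=1$ and $q>1$ are handled uniformly.

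I expect the main obstacle to be exactly the rigorous formulation and application of this criterion — turning the purely algebraic content of Lemma~\ref{lem.nocycle} into the dynamical non-existence statement. The delicate points are: (i) justifying the geometry of a hypothetical closed orbit (a single waist on $\{X=1\}$, each lateral branch a graph over $\overline{Y}$) and confining limit cycles to $\{X>0\}$; (ii) handling the square-root singularity of $x_\ell,x_r$ at $z=0$, which makes the two scalar ODEs mildly non-Lipschitz there, so that the comparison argument still runs cleanly up to the waist; and (iii) the degenerate Hopf value $c=kn$, where the leading term of $F(x_\ell(z))-F(x_r(z))$ vanishes — this case is absorbed by the separate treatment of $c=kn$ in Steps~1--3 of the proof of Lemma~\ref{lem.nocycle}, which already supplies the strict ordering driving the comparison. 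Once these are in place, combining with Proposition~\ref{prop.P2} (which locates the subcritical bifurcation at $c=kn$ on the side $c<kn$) gives a consistent picture throughout $[kn,kn+2\sqrt{p-q})$.
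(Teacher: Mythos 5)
Your proposal is correct and follows essentially the same route as the paper: pass to the Li\'enard form \eqref{Lienardsyst}, observe that limit cycles must enclose $P_2'$ in $\{X>0\}$, and reduce non-existence to the non-solvability of the system $F(u)=F(v)$, $G(u)=G(v)$ with $u\in(0,1)$, $v\in(1,\infty)$, which is exactly Lemma~\ref{lem.nocycle}. The only difference is that the paper simply cites this Li\'enard non-existence criterion (Perko, and Han's 1992 note), whereas you sketch its proof via the energy reparametrisation and comparison of the two lateral branches — a correct unpacking of the cited result, with the delicate points (sign at $c=kn$, square-root singularity at the waist) properly identified.
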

\begin{proof}
Introducing the notation
$$
g(\widetilde{X}):=(1+\widetilde{X})^p-(1+\widetilde{X})^q, \quad G(\widetilde{X}):=\frac{1}{p+1}(1+\widetilde{X})^{p+1}-\frac{1}{q+1}(1+\widetilde{X})^{q+1},
$$
respectively
$$
F(\widetilde{X}):=k(1+\widetilde{X})^{n}-k-c\widetilde{X},
$$
we employ the following criterion for non-existence of limit cycles in the system \eqref{Lienardsyst2} (see for example \cite[Theorem 3, Section 3.9]{Pe} and \cite{Han92}): if $ug(u)>0$ for any $u\in(-1,0)\cup(0,\infty)$ and the system of equations
\begin{equation}\label{interm11}
F(u)=F(v), \quad G(u)=G(v)
\end{equation}
has no solution $(u,v)$ such that $u\in(-1,0)$ and $v\in(0,\infty)$, then the system \eqref{Lienardsyst2} has no limit cycles in the region $-1<\widetilde{X}<\infty$. The first condition is easily verified; indeed,
$$
ug(u)=u(1+u)^q((1+u)^p-1)>0, \quad u\in(-1,\infty)\setminus\{0\}.
$$
The system \eqref{interm11} becomes in our case
$$
\left\{\begin{array}{ll}k(1+u)^n-cu=k(1+v)^n-cv, \\ \frac{1}{p+1}(1+u)^{p+1}-\frac{1}{q+1}(1+u)^{p+1}=\frac{1}{p+1}(1+v)^{p+1}-\frac{1}{q+1}(1+v)^{q+1},\end{array}\right.
$$
which, after letting $x:=1+u\in(0,1)$, $y:=1+v\in(1,\infty)$ and easy algebraic manipulations, reduces to the system \eqref{basic.syst}. An application of Lemma \ref{lem.nocycle} ensures that the system \eqref{interm11} has no solutions with $u=x-1\in(-1,0)$, $v=y-1\in(0,\infty)$ and thus the system \eqref{Lienardsyst2} has no limit cycles in the region $\widetilde{X}\in(-1,\infty)$. Since $X=1+\widetilde{X}=f$ in both systems \eqref{Lienardsyst} and \eqref{PSsyst}, it readily follows that \eqref{PSsyst} has no limit cycles for $X\in(0,\infty)$ as well.
\end{proof}
Recalling next the notation $X_1(c)$, respectively $X_0(c)$, introduced in Section \ref{sec.monot}, we can establish the finiteness of $X_0(c)$ in the range of $c$ under consideration.
\begin{lemma}\label{lem.finit}
Let $n$, $p$, $q$ and $k$ as in \eqref{range.exp} such that $n<p+q+1$ and $c\in[kn,kn+2\sqrt{p-q})$. Then $1<X_0(c)<X_1(c)<\infty$.
\end{lemma}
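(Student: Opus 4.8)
The plan is to combine the no–limit–cycle result of Lemma \ref{lem.nocycle2} with the local analysis at $P_1$ and $P_2$ and a Poincaré–Bendixson argument, in a spirit parallel to the proof of Proposition \ref{prop.large}. First I would record what is already known: by Proposition \ref{prop.P2}, for $c\in[kn,kn+2\sqrt{p-q})$ the critical point $P_2=(1,0)$ is an unstable focus (or an unstable node at the endpoint $c=kn$, where in fact it is a center of the linearization but the subcritical Hopf analysis shows it is an unstable weak focus since $\sigma>0$). Hence, near $P_2$, all trajectories spiral (or move) \emph{outward}. Meanwhile, by Proposition \ref{prop.P1}(a), the trajectory $l_0(c)$ enters $P_1=(0,0)$ from the region $(0,\infty)\times(-\infty,0)$ along the center manifold $Y\approx -X^q/c$, and $l_1(c)$ leaves $P_1$ into the first quadrant tangent to $(1,c)$.

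Next I would construct a bounded positively invariant region $\mathcal R$ in the half-plane $X>0$ whose only critical point is $P_2$ and which contains no limit cycle (the latter being exactly Lemma \ref{lem.nocycle2}). A natural candidate is the region bounded below by a line $Y=m(X-1)$ with $m>0$ chosen so that the flow crosses it consistently — the computation in \eqref{flow.line}–\eqref{interm3} shows $F(X)\ge (X-1)[m^2-m(c-kn)+p-q]$, and for $c\in[kn,kn+2\sqrt{p-q})$ one cannot make this nonnegative on all of $[0,1]$, so instead I would use the isocline $Y'=0$, i.e. the curve \eqref{isoY}, which is increasing and positive for $X>1$ with a vertical asymptote at $X=(c/kn)^{1/(n-1)}$, to cap the region from above-right, exactly as in the second half of the proof of Proposition \ref{prop.large}: the argument via \eqref{interm4} showing $l_1(c)$ cannot have a vertical asymptote and must cross the isocline and then the $X$-axis carries over verbatim and gives $X_1(c)<\infty$ with $X_1(c)>1$ (using that the flow across $\{Y=0,\ 0<X<1\}$ points upward and across $\{Y=0,\ X>1\}$ points downward). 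This settles $X_1(c)<\infty$.

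The real work is showing $X_0(c)<\infty$, i.e. that $l_0(c)$, followed backward from $P_1$, must cross the negative $X$-axis at some $X_0(c)>1$ rather than escaping to infinity in the fourth quadrant $(0,\infty)\times(-\infty,0)$ or spiraling into $P_2$. Here is where the absence of limit cycles is essential. I would argue by contradiction: suppose $l_0(c)$ stays in $(0,\infty)\times(-\infty,0)$ for all backward time. Using the isocline $Y'=0$ again (which in the fourth quadrant lies where $X^p-X^q$ and $c-knX^{n-1}$ have opposite signs, so $Y<0$ there only for $X>(c/kn)^{1/(n-1)}$), together with an estimate of $Y'(X)$ as in \eqref{interm4}, one shows the backward trajectory cannot have a finite vertical asymptote and, if $Y\to-\infty$, that $X$ must grow without bound while $Y'$ stays bounded — a contradiction of the same flavor as in Proposition \ref{prop.large}. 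This forces $l_0(c)$ (backward) to reenter the strip $0<X<1$ or to cross $Y=0$ at some $X>1$; the direction of the flow across $\{Y=0,\ X>1\}$ (downward, since $X^q-X^p<0$) is compatible only with a backward crossing from $Y<0$ to $Y>0$, giving the desired zero $X_0(c)\in(1,\infty)$. Finally, the strict inequality $X_0(c)<X_1(c)$: once we know $l_0(c)$ reaches a point $(X_0(c),0)$ with $X_0(c)>1$ and then, continued backward, must go into the first quadrant near $P_2$, the Poincaré–Bendixson theorem applied in the annular region between $l_0(c)$ and $l_1(c)$ — which contains only the critical point $P_2$ and, by Lemma \ref{lem.nocycle2}, no periodic orbit — forces the $\omega$-limit (in backward time) of $l_0(c)$ to be $P_2$; comparing with $l_1(c)$, whose first zero is $X_1(c)$, and using the monotonicity/non-intersection of distinct trajectories plus the fact that both emanate from the neighborhood of $P_2$ in a controlled order, yields $X_0(c)<X_1(c)$.

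I expect the main obstacle to be the second step: ruling out escape of $l_0(c)$ to infinity in the fourth quadrant and correctly invoking Poincaré–Bendixson in a region that is only \emph{eventually} bounded. The delicate point is that, unlike the range $c\ge kn+2\sqrt{p-q}$ treated in Proposition \ref{prop.large}, here one cannot trap $l_0(c)$ inside a simple triangle, because $P_2$ is now repelling, so the invariant region has to be built using the isocline and the unstable spiral structure at $P_2$, and one must check carefully that $l_1(c)$ and $l_0(c)$ together with an arc of large $X$ bound a genuine annulus enclosing $P_2$.
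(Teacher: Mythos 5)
Your overall strategy (local analysis at $P_1$ and $P_2$, the no--limit--cycle Lemma \ref{lem.nocycle2}, Poincar\'e--Bendixson) is the right one, and your treatment of $X_1(c)\in(1,\infty)$ is fine (it is essentially already contained in Propositions \ref{prop.monot1} and \ref{prop.large}). The gap is in the step you yourself flag as the main obstacle: you claim that if $l_0(c)$ stays in the fourth quadrant for all backward time, then ``$X$ must grow without bound while $Y'$ stays bounded,'' giving a contradiction ``of the same flavor as in Proposition \ref{prop.large}.'' This does not work. Monotonicity of $X$ and the vertical-asymptote argument do force $X\to\infty$ backward in time, but from there no contradiction follows from the asymptotics of the vector field alone: writing the orbit as a graph $Y=Y(X)$ one has $dY/dX=c-knX^{n-1}-(X^p-X^q)/Y$, and the ansatz $Y\sim -CX^{p-n+1}$ (balancing $knX^{n-1}|Y|$ against $X^p$) is consistent with this equation for suitable exponents; the system genuinely possesses orbits escaping to $(+\infty,-\infty)$ in the fourth quadrant, so no local computation at infinity can rule this out for $l_0(c)$ specifically. (Also, $Y'=Y(c-knX^{n-1})-X^p+X^q$ is certainly not bounded along such an orbit.)

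The argument the paper uses is indirect, and this is the idea you are missing: \emph{assuming} $X_0(c)=\infty$, the graph of $l_0(c)$ is defined for all $X>0$ with $Y<0$, so the region $\mathcal{R}$ bounded by $l_1(c)$ over $(0,X_1(c))$, the vertical segment $\{X=X_1(c),\ Y_{0,c}(X_1(c))\le Y\le 0\}$, and $l_0(c)$ over $(0,X_1(c))$ is compact and positively invariant (the two trajectory arcs cannot be crossed, and the flow across the vertical segment points inward since $X'=Y<0$ there). After crossing the $X$-axis at $(X_1(c),0)$ the trajectory $l_1(c)$ enters $\mathcal{R}$ and is trapped, and Poincar\'e--Bendixson yields a contradiction: its $\omega$-limit cannot be $P_2$ (unstable focus), cannot be a cycle (Lemma \ref{lem.nocycle2}), and cannot be $P_1$ or a graphic through $P_1$, because the only orbit entering $P_1$ from $\{X>0\}$ is $l_0(c)$, which is unbounded under the standing assumption, whereas $\omega$-limit sets of bounded orbits are compact and invariant. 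The same trapping region, applied when $X_0(c)>X_1(c)$, gives the strict inequality $X_0(c)<X_1(c)$; your appeal to ``monotonicity/non-intersection of distinct trajectories'' does not by itself order the two crossing points, since the upper arc of $l_1(c)$ and the lower arc of $l_0(c)$ can both exist without intersecting regardless of the relative order of $X_0(c)$ and $X_1(c)$.
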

\begin{proof}
The fact that $X_0(c)>1$ follows from the stability of the critical point $P_2$ (an unstable focus, according to Proposition \ref{prop.P2}), which prevents the existence of a direct connection $P_2-P_1$ fully contained in the half-plane $\{Y\leq0\}$. Assume next for contradiction that there is $c_0\in[kn,kn+2\sqrt{p-q})$ such that $X_0(c_0)=\infty$. This implies that $l_0(c_0)$ is fully contained in the half-plane $\{Y\leq0\}$, whence $X'(\xi)<0$ for any $\xi\in(-\infty,\infty)$ and thus there exists
$$
\lim\limits_{\xi\to-\infty}X(\xi)=l\in(1,\infty]
$$
along the trajectory $l_0(c_0)$. If $l<\infty$, the non-existence of finite critical points with $X=l$ and $Y<0$ implies that $Y(\xi)\to-\infty$ as $\xi\to-\infty$, hence the function $Y_{0,c_0}(X)$ has a vertical asymptote at $X=l$. But the inverse function theorem gives that
$$
\frac{dY_{0,c_0}(X)}{dX}=c_0-knX^{n-1}+\frac{X^q-X^p}{Y_{0,c_0}(X)}
$$
and thus
$$
\lim\limits_{X\to l}\frac{dY_{0,c_0}(X)}{dX}=c_0-knl^{n-1}\in\real,
$$
contradicting the vertical asymptote as $X\to l$. We thus deduce that $l=\infty$. Let us now consider the region $\mathcal{R}$ of the half-plane $\{X\geq0\}$ limited by the trajectory $Y=Y_{1,c_0}(X)$ for $X\in(0,X_1(c_0))$, the vertical line $X=X_1(c_0)$ for $Y\in[Y_{0,c_0}(X_1(c_0),0]$ (that is, going from $Y=0$ down to the intersection with the trajectory $l_0(c_0)$) and the trajectory $Y=Y_{0,c_0}(X)$ for $X\in(0,X_1(c_0))$, see Figure \ref{fig0} for a picture of it. The region $\mathcal{R}$ is positively invariant; indeed, the curves $Y=Y_{1,c_0}(X)$ and $Y=Y_{0,c_0}(X)$ are separatrices in the system \eqref{PSsyst} with $c=c_0$, while the direction of the flow of the system \eqref{PSsyst} across the vertical half-line $X=X_{1}(c_0)$ with $Y<0$ points towards the interior of the region $\mathcal{R}$. Let $\xi=\xi_1(c_0)\in\real$ be such that, on the trajectory $l_1(c_0)$,
$$
(X(\xi_1(c_0)),Y(\xi_1(c_0)))=(X_1(c_0),0).
$$
Since $X_1(c_0)>1$, the point $(X_1(c_0),0)$ is not critical, hence the trajectory $l_1(c_0)$ has to cross the $X$-axis and we infer from the first equation of \eqref{PSsyst} that there is $\delta_0>0$ such that, for any $\xi\in(\xi_1(c_0),\xi_1(c_0)+\delta_0)$, we have on the trajectory $l_1(c_0)$ that $(X(\xi),Y(\xi))\in\mathcal{R}$. Since $\mathcal{R}$ is positively invariant, we infer that the trajectory $l_1(c_0)$ remains forever in the compact region $\overline{\mathcal{R}}$. Taking into account the unstable character of the critical point $P_2$ (according to Proposition \ref{prop.P2}), the non-existence of limit cycles (according to Lemma \ref{lem.nocycle2}) and the uniqueness of the trajectory $l_0(c_0)$ entering $P_1$, an application of the Poincar\'e-Bendixon's Theorem \cite[Section 3.7]{Pe} leads to a contradiction. It thus follows that $1<X_0(c)<\infty$ for any $c\in[kn,kn+2\sqrt{p-q})$. Finally, assuming that there is $c_0\in(kn,kn+2\sqrt{p-q})$ such that $X_0(c_0)>X_1(c_0)$, a completely similar argument as the one above, with the same positively invariant region $\mathcal{R}$, leads to a contradiction, completing the proof.
\end{proof}

\begin{figure}[ht!]
  \begin{center}
  \subfigure[Region $\mathcal{R}$ employed in the proof of Lemma \ref{lem.finit}]{\includegraphics[width=7.5cm,height=6cm]{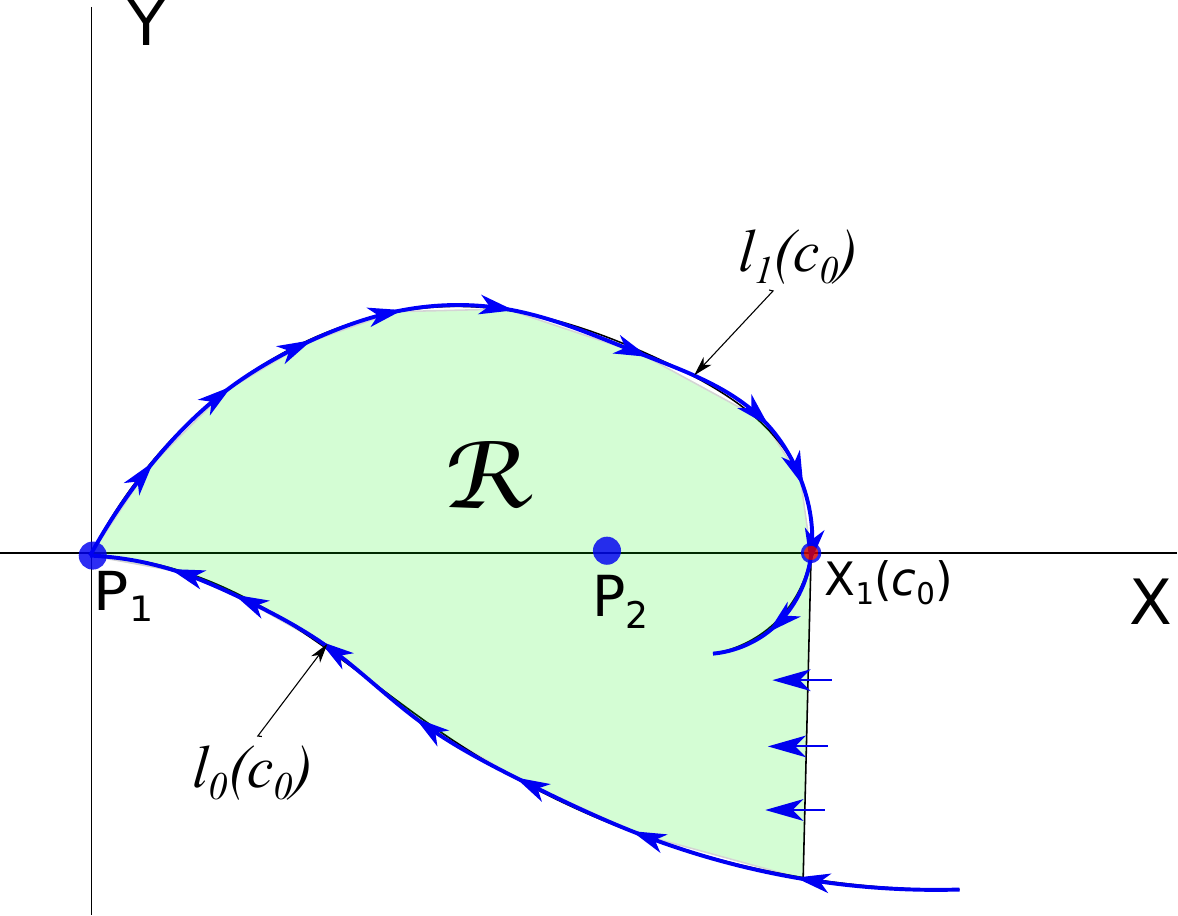}}
  \subfigure[Region $\mathcal{R}'$ employed in the proof of Proposition \ref{prop.sublarge}]{\includegraphics[width=7.5cm,height=6cm]{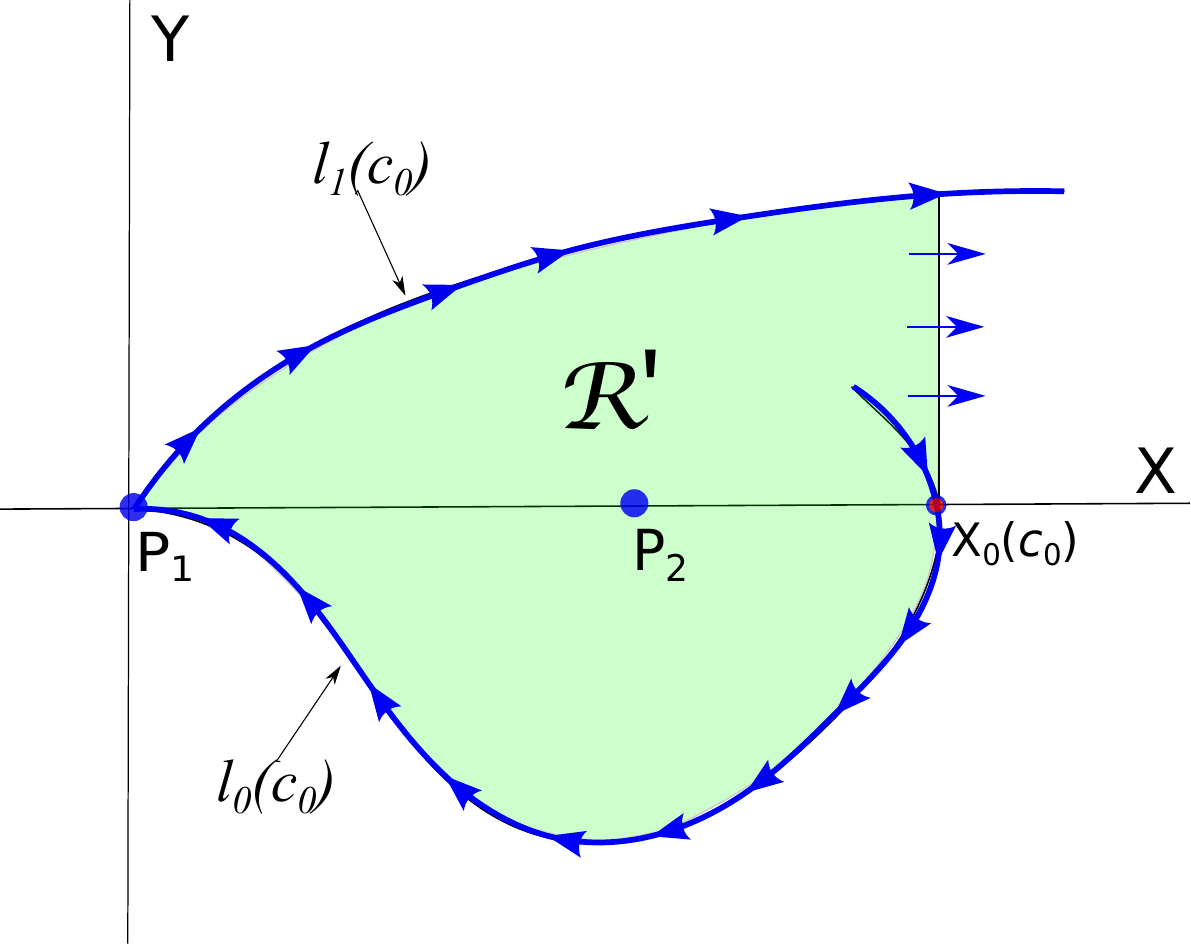}}
  \end{center}
  \caption{Regions $\mathcal{R}$ and $\mathcal{R}'$ used in several proofs.}\label{fig0}
\end{figure} 

With the help of all the previous results, we can complete the phase portrait of the system \eqref{PSsyst} for $c\in(kn,kn+2\sqrt{p-q})$.
\begin{proposition}\label{prop.sublarge}
Let $n$, $p$, $q$ and $k$ as in \eqref{range.exp} such that $n<p+q+1$ and $c\in(kn,kn+2\sqrt{p-q})$. Then the orbit $l_0(c)$ connects the unstable focus $P_2$ to the critical point $P_1$.
\end{proposition}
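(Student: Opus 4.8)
The plan is to follow the orbit $l_0(c)$ \emph{backwards} in $\xi$ and to prove, by trapping it in a suitable compact region and invoking the Poincar\'{e}--Bendixson theorem, that its $\alpha$-limit set must be $\{P_2\}$. Recall from Proposition~\ref{prop.P1} (or Proposition~\ref{prop.P1q1} when $q=1$) that $l_0(c)$ reaches $P_1$ as $\xi\to\infty$ arriving from the quadrant $(0,\infty)\times(-\infty,0)$, and from Lemma~\ref{lem.finit} that, going backwards, it crosses the axis $\{Y=0\}$ transversally (there $Y'=X^q-X^p<0$, since $X>1$) at a point $(X_0(c),0)$ with $1<X_0(c)<X_1(c)<\infty$, the orbit lying in the open first quadrant just before this crossing. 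I would then introduce the region $\mathcal{R}'$ enclosed by the arc of $l_1(c)$ from $P_1$ to $(X_1(c),0)$ (which stays in $\{Y\ge0\}$), the horizontal segment $\{Y=0,\ X_0(c)\le X\le X_1(c)\}$, and the arc of $l_0(c)$ from $(X_0(c),0)$ back to $P_1$ (which stays in $\{Y\le0\}$); this boundary is a simple closed curve, see Figure~\ref{fig0}(b). Since $X_0(c)>1$, the equilibrium $P_2=(1,0)$ lies in the interior of $\mathcal{R}'$, $P_1$ is the only critical point of \eqref{PSsyst} on $\partial\mathcal{R}'$, and $\overline{\mathcal{R}'}$ contains no other critical point.

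The second step is to check that $\mathcal{R}'$ is \emph{negatively invariant}: its two curved sides are trajectories and cannot be crossed, while along the segment $\{Y=0,\ X_0(c)\le X\le X_1(c)\}$ the flow obeys $Y'=X^q-X^p<0$, i.e. it crosses this side transversally pointing downwards, out of $\mathcal{R}'$. Hence a trajectory issuing from an interior point of $\mathcal{R}'$ can leave only in forward time (through that segment), so its backward continuation stays in $\overline{\mathcal{R}'}$. As $l_0(c)$ passes through the interior of $\mathcal{R}'$ for every $\xi$ slightly below the value $\xi_0$ at which it hits $(X_0(c),0)$, I conclude that $\{l_0(c)(\xi):\xi\le\xi_0-\delta\}\subseteq\overline{\mathcal{R}'}$ for small $\delta>0$, whence $\alpha(l_0(c))\subseteq\overline{\mathcal{R}'}$.

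Thus $\alpha(l_0(c))$ is a nonempty, compact, connected, invariant set inside $\overline{\mathcal{R}'}$, and by Lemma~\ref{lem.nocycle2} it contains no periodic orbit; the Poincar\'{e}--Bendixson theorem \cite[Section~3.7]{Pe} then leaves only the alternatives that $\alpha(l_0(c))$ equals $\{P_2\}$, equals $\{P_1\}$, or is a graphic formed by $P_1$, $P_2$ and connecting orbits. Because $P_2$ is an unstable focus, no orbit admits $P_2$ as $\omega$-limit, so no graphic can contain $P_2$; therefore, unless $\alpha(l_0(c))=\{P_2\}$ (which is the assertion), $\alpha(l_0(c))$ involves only $P_1$ and is contained in $\{X\ge0\}$. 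The local phase portrait at $P_1$ from Proposition~\ref{prop.P1} (resp.\ Proposition~\ref{prop.P1q1}) --- one-dimensional unstable set, and flow directed towards $P_1$ along the center manifold --- forces $l_1(c)$ to be the unique orbit of \eqref{PSsyst} in $\{X>0\}$ with $\alpha$-limit $P_1$ and $l_0(c)$ the unique one with $\omega$-limit $P_1$; this identifies the remaining possibility as $l_0(c)=l_1(c)$, a homoclinic loop at $P_1$.

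Ruling out that homoclinic loop is the point I expect to be the main obstacle, and the trapping region $\mathcal{R}'$ settles it. Right after its first return to $\{Y=0\}$ at $(X_1(c),0)$, the orbit $l_1(c)$ enters $\{Y<0\}$ at points with $X$ close to $X_1(c)$; none of those points belongs to $\overline{\mathcal{R}'}$, where $X\in[X_0(c),X_1(c)]$ imposes $Y\ge0$. But if $l_0(c)=l_1(c)$, such a point is reached by $l_0(c)$ at a parameter value strictly below $\xi_0$ (the first crossing of $\{Y=0\}$ precedes the last one, since $X_0(c)<X_1(c)$), hence lies on the backward trajectory from the interior point of $\mathcal{R}'$ fixed above, contradicting the negative invariance of $\mathcal{R}'$. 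Therefore $\alpha(l_0(c))=\{P_2\}$, so $l_0(c)$ connects the unstable focus $P_2$ to $P_1$, as claimed.
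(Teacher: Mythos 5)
Your proof is correct and follows essentially the same strategy as the paper: a negatively invariant region bounded by the arcs of $l_1(c)$ and $l_0(c)$ together with a transversal segment (you close it with the horizontal segment $\{Y=0,\ X_0(c)\le X\le X_1(c)\}$ where the paper uses the vertical segment $X=X_0(c)$, $Y\in[0,Y_{1,c}(X_0(c))]$ — both work since the flow exits each in forward time), followed by the Poincar\'e--Bendixson theorem and the non-existence of limit cycles from Lemma~\ref{lem.nocycle2}. Your explicit exclusion of the alternative $\alpha(l_0(c))=\{P_1\}$ (i.e.\ $l_0(c)=l_1(c)$) via the fact that the segment of the putative homoclinic orbit just past $(X_1(c),0)$ would have to lie outside the negatively invariant region is a point the paper leaves implicit, and it is a worthwhile addition.
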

\begin{proof}
Pick $c\in(kn,kn+2\sqrt{p-q})$. We already know from Lemma \ref{lem.finit} that $1<X_0(c)<X_1(c)<\infty$. We consider the region $\mathcal{R}'$ limited by the trajectory $l_0(c)$, that is, the curve $Y=Y_{0,c}(X)$ for $X\in(0,X_0(c))$, the trajectory $l_1(c)$, that is, the curve $Y=Y_{1,c}(X)$ for $X\in(0,X_0(c))$, and the vertical segment $X=X_0(c)$ for $Y\in[0,Y_{1,c}(X_0(c))]$; see Figure \ref{fig0} for a picture of it. A similar argument as in the previous proof gives that $\mathcal{R}'$ is negatively invariant. Moreover, the first equation of the system \eqref{PSsyst} and the fact that $X_0(c)>1$ implies that the trajectory $l_0(c)$ goes out of the region $\mathcal{R}'$ at $X=X_0(c)$. It follows that the $\alpha-$limit of the trajectory $l_0(c)$ is contained in the compact set $\overline{\mathcal{R}'}$, and once more an application of the Poincar\'e-Bendixon's Theorem, together with the non-existence of limit cycles established in Lemma \ref{lem.nocycle2}, gives that the trajectory $l_0(c)$ starts from the critical point $P_2$, as claimed.
\end{proof}
We plot in Figure \ref{fig1} the phase portrait of the system \eqref{PSsyst} in this range, with the two trajectories $l_0(c)$ (connecting $P_2$ to $P_1$) and $l_1(c)$. 

\begin{figure}[ht!]
  \begin{center}
  \includegraphics[width=11cm,height=7.5cm]{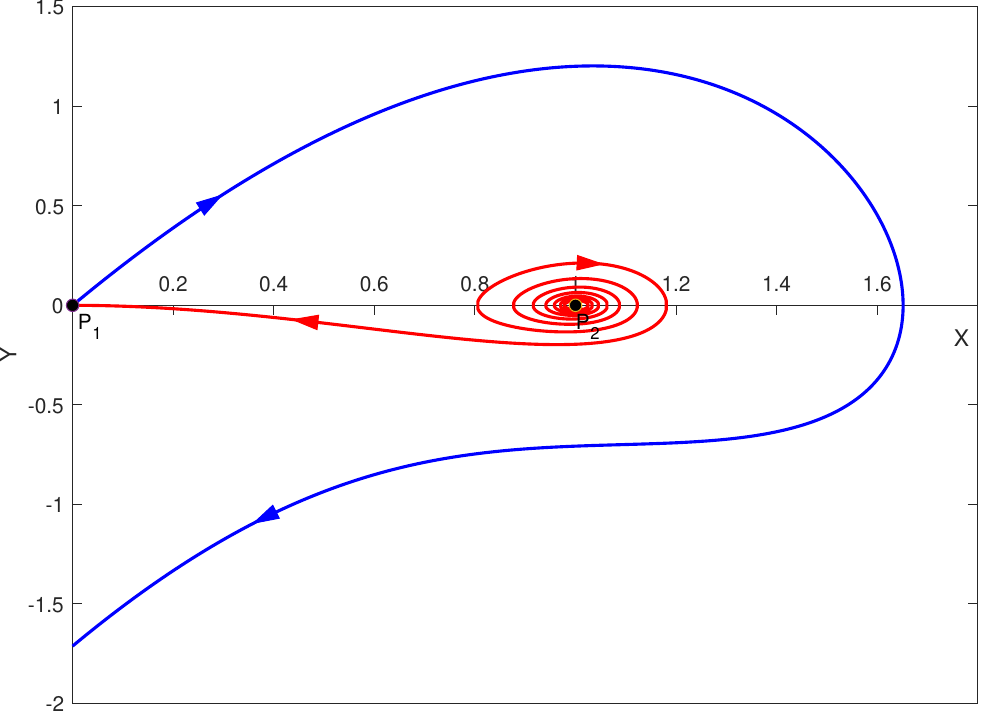}
  \end{center}
  \caption{Illustration of the trajectories $l_1(c)$ and $l_0(c)$ for $c\in(kn,kn+2\sqrt{p-q})$. Experiment for $n=2$, $p=3$, $q=2$, $k=1$ and $c=2.1$.}\label{fig1}
\end{figure}

\section{Phase portrait for $c\in(-\infty,-2\sqrt{p-q})$}\label{sec.small}

In this short section, we establish the configuration of the phase plane associated to the system \eqref{PSsyst} for $c$ in a neighborhood of $-\infty$. More precisely, we have
\begin{proposition}\label{prop.small}
Let $n$, $p$, $q$ and $k$ as in \eqref{range.exp} such that $n<p+q+1$ and $c\in(-\infty,-2\sqrt{p-q})$. Then $X_1(c)=1$; that is, the trajectory $l_1(c)$ connects the critical point $P_1$ to the stable node $P_2$.
\end{proposition}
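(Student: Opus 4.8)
The plan is to trap the orbit $l_1(c)$ in a bounded positively invariant triangle $\mathcal{T}'$ containing only the critical points $P_1$ and $P_2$ (at two of its vertices), and then to conclude by the Poincar\'e--Bendixson Theorem. First I observe that $c<-2\sqrt{p-q}<kn-2\sqrt{p-q}$ (since $kn>0$), so Proposition \ref{prop.P2} gives that $P_2$ is a stable node; moreover the divergence of the vector field of \eqref{PSsyst} equals $c-knX^{n-1}<0$ on $\{X\geq0\}$, hence Bendixson's criterion rules out periodic orbits of \eqref{PSsyst} in $\{X\geq0\}$. Consequently, once $l_1(c)$ is confined to the compact set $\overline{\mathcal{T}'}$, its $\omega$-limit set is forced to be $\{P_1\}$, $\{P_2\}$, or a graphic joining them, and everything except $\{P_2\}$ will be excluded.

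The heart of the argument is the construction of $\mathcal{T}'$. I would fix any $m>0$ with $m^{2}+cm+(p-q)<0$; such $m$ exists precisely because $c<-2\sqrt{p-q}$ forces $c^{2}-4(p-q)>0$, and the two roots $\tfrac12\bigl(-c\pm\sqrt{c^{2}-4(p-q)}\bigr)$ are both positive. Let $\mathcal{T}'=\{X\geq0,\ Y\geq0,\ mX+Y\leq m\}$ be the triangle with vertices $(0,0)$, $(1,0)$, $(0,m)$. On $\{Y=0,\ 0<X<1\}$ one has $Y'=X^{q}-X^{p}>0$, pointing into $\mathcal{T}'$; on $\{X=0,\ 0<Y<m\}$ one has $X'=Y>0$, again pointing inward. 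Across the hypotenuse $Y=m(1-X)$, with outward normal $(m,1)$, the flow of \eqref{PSsyst} equals
\[
m(1-X)\bigl[m+c-knX^{n-1}\bigr]+\bigl(X^{q}-X^{p}\bigr),
\]
and, using Lemma \ref{lem.calc} in the form $X^{q}-X^{p}\leq(p-q)(1-X)$ on $[0,1]$ together with $knX^{n-1}\geq0$, this is at most $(1-X)\bigl[m^{2}+cm+(p-q)\bigr]<0$ for $X\in[0,1)$. Hence $\mathcal{T}'$ is positively invariant.

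To finish: by Proposition \ref{prop.P1}(b) (or Proposition \ref{prop.P1q1} when $q=1$) the orbit $l_1(c)$ leaves $P_1$ into the open first quadrant, hence into $\mathcal{T}'$, and so stays in $\overline{\mathcal{T}'}\subseteq\{0\leq X\leq1,\ Y\geq0\}$ for all $\xi$. Its $\omega$-limit set is a nonempty, compact, connected, invariant subset of $\overline{\mathcal{T}'}$ containing no periodic orbit and no critical point other than $P_1$, $P_2$. It cannot contain $P_1$: any orbit forward-asymptotic to $P_1$ lies (eventually) on its stable manifold as described in Proposition \ref{prop.P1}(b) (resp. \ref{prop.P1q1}), which meets $\{X>0\}$ only in $\{Y<0\}$, disjoint from $\overline{\mathcal{T}'}$; so no graphic through $P_1$ fits in $\overline{\mathcal{T}'}$ either. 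And since $P_2$ is a stable node, no orbit leaves it, which excludes any graphic through $P_2$. Therefore $\omega(l_1(c))=\{P_2\}$, i.e.\ $l_1(c)$ connects $P_1$ to $P_2$. Finally, along $l_1(c)$ one has $X'=Y>0$ and $Y>0$ while $X\in(0,1)$ (the flow across $\{Y=0,\ 0<X<1\}$ points strictly upward), with $(X,Y)\to(1,0)$; hence $Y_{1,c}(X)>0$ on $(0,1)$ and vanishes only as $X\to1^{-}$, so $X_1(c)=1$.

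The step I expect to be most delicate is pinning down the sign of the flow on the hypotenuse: a straight-line barrier leaves a competition between the negative term $m(1-X)(m+c-knX^{n-1})$ and the positive term $X^{q}-X^{p}$, and it is exactly Lemma \ref{lem.calc} that absorbs the latter into $(p-q)(1-X)$ and reduces matters to the scalar inequality $m^{2}+cm+(p-q)<0$, which is solvable in $m>0$ if and only if $c<-2\sqrt{p-q}$ --- so the threshold of the proposition is precisely the one that makes the region close up. The remaining points (behaviour at the vertices $P_1$, $P_2$ and the exclusion of connecting graphics) are routine once the local analyses of Propositions \ref{prop.P1}, \ref{prop.P1q1} and \ref{prop.P2} are available.
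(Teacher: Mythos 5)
Your proposal is correct and follows essentially the same route as the paper: the same straight-line barrier $Y=m(1-X)$, the same use of Lemma \ref{lem.calc} to reduce the sign of the flow on the hypotenuse to the solvability of $m^{2}+cm+(p-q)\leq 0$ in $m>0$ (which is exactly the threshold $c\leq-2\sqrt{p-q}$), the same positively invariant triangle, and Poincar\'e--Bendixson to land in the stable node $P_2$. The only difference is that you spell out the Poincar\'e--Bendixson endgame (negative divergence $c-knX^{n-1}<0$ to exclude closed orbits, and the exclusion of graphics through $P_1$ and $P_2$) more explicitly than the paper, which defers these details to the proof of Proposition \ref{prop.large}.
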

\begin{proof}
We argue as in the proof of Proposition \ref{prop.large} and consider a line segment of equation $Y=m(1-X)$, $X\in[0,1]$, with $m>0$ to be determined. Taking $\overline{n}=(-m,-1)$ as normal direction to this line segment, we deduce by direct calculation that the flow of the system \eqref{PSsyst} across the line $Y=m(1-X)$ is given by the sign of the expression
\begin{equation}\label{flow.line2}
G(X)=(X-1)\left[m^2+mc-kmnX^{n-1}+\frac{X^p-X^q}{X-1}\right], \quad X\in(0,1).
\end{equation}
Since $X\in(0,1)$, we deduce from Lemma \ref{lem.calc} that
$$
m^2+mc-kmnX^{n-1}+\frac{X^p-X^q}{X-1}\leq m^2+mc+p-q\leq 0,
$$
provided $c\leq-m-(p-q)/m$, whence $G(X)>0$ for $X\in(0,1)$ in the same range of $c$. Since $c\leq-2\sqrt{p-q}$ and
$$
\min\{m+(p-q)/m:m>0\}=2\sqrt{p-q},
$$
there is $m$ such that $c\leq-m-(p-q)/m$, hence $G(X)>0$ with this choice of $m$. It readily follows that the triangular region limited by the two coordinate axis and the line $Y=m(1-X)$ is a positively invariant region and, since the trajectory $l_1(c)$ enters the triangle, an application of the Poincar\'e-Bendixon's Theorem \cite[Section 3.7]{Pe} proves that the trajectory $l_1(c)$ has to enter the stable node $P_2$ while remaining in the positive cone $(0,\infty)^2$ of the phase plane, that is, $X_1(c)=1$, as claimed. The details are completely similar to the end of the proof of Proposition \ref{prop.large}.
\end{proof}
Let us remark that, for $c\in(-\infty,-2\sqrt{p-q})$, we have $X_1(c)=1<X_0(c)$ (where $X_0(c)$ can be finite or infinite), while for $c>kn$ we have shown in Lemma \ref{lem.finit} that $1<X_0(c)<X_1(c)$. We have only one step left towards the existence of a unique homoclinic orbit stemming from $P_1$, that is, the continuity of $X_1(c)$ and $X_0(c)$ as functions of $c$.

\section{The unique homoclinic orbit}\label{sec.cd}

We prove next the continuous dependence with respect to $c$.
\begin{lemma}\label{lem.cd}
Let $n$, $p$, $q$ and $k$ as in \eqref{range.exp} such that $n<p+q+1$. Then the mappings $c\in\real\mapsto X_1(c)\in[1,\infty)$ and $c\in\real\mapsto X_0(c)\in[1,\infty]$ are continuous.
\end{lemma}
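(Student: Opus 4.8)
The plan is to prove continuity of $c\mapsto X_1(c)$ and $c\mapsto X_0(c)$ separately, relying on continuous dependence of solutions of the ODE system \eqref{PSsyst} on the parameter $c$, together with the fact that the relevant crossings of the $X$-axis are transversal (hence stable under perturbation) and a monotonicity-plus-compactness argument to rule out jumps.

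\textbf{Continuity of $X_1(c)$.} Fix $c_0\in\real$. By Propositions \ref{prop.P1}, \ref{prop.P1c0}, \ref{prop.P1q1}, the trajectory $l_1(c)$ leaves $P_1$ into the cone $(0,\infty)^2$ in a way that depends continuously on $c$ (one may fix a small circle around $P_1$ and track the exit point of $l_1(c)$; in the hyperbolic case $q=1$ this is the stable/unstable manifold theorem with parameters, in the non-hyperbolic cases one uses the explicit local parametrizations together with the monotonicity in $c$ from Proposition \ref{prop.monot1}). From that exit point, the solution of \eqref{PSsyst} depends continuously on $c$ and on initial data on compact $\xi$-intervals. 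Since Proposition \ref{prop.monot1} gives $X_1(c)\in[1,\infty)$ for all $c$ and strict monotonicity of $X_1$ in $c$ once $X_1(c)>1$, the only thing to exclude is a jump. If $X_1(c_0)>1$, the crossing of $\{Y=0\}$ at $(X_1(c_0),0)$ is transversal because $Y'=X^q-X^p<0$ there; hence by the implicit function theorem applied to the flow, $X_1$ is continuous (indeed $C^1$) near $c_0$. If $X_1(c_0)=1$, i.e. $l_1(c_0)$ runs into $P_2$ inside the cone, one uses that $P_2$ is (for such $c_0$, which necessarily satisfies $c_0\le kn-2\sqrt{p-q}$ or lies in the stable-focus range) a stable node or focus: nearby trajectories $l_1(c)$ with $c$ close to $c_0$ are trapped, by the monotonicity $Y_{1,c}(X)<Y_{1,c_0}(X)$ for $c<c_0$ and a barrier argument (as in Proposition \ref{prop.small}), into entering the same basin, forcing $X_1(c)\to 1$; combined with monotonicity this gives one-sided, hence full, continuity at $c_0$.

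\textbf{Continuity of $X_0(c)$.} The argument is the mirror image, using Proposition \ref{prop.monot0}. The trajectory $l_0(c)$ enters $P_1$ from $(0,\infty)\times(-\infty,0)$, depending continuously on $c$; we follow it backward in $\xi$. When $X_0(c_0)\in(1,\infty)$, the last crossing of $\{Y=0\}$ is transversal ($Y'=X^q-X^p>0$ there since $X>1$), so the implicit function theorem gives local continuity. When $X_0(c_0)=1$, i.e. $l_0(c_0)$ emanates from $P_2$ inside the lower half-cone (which by Propositions \ref{prop.P2}, \ref{prop.large}, \ref{prop.sublarge} happens exactly for $c_0\ge kn$, where $P_2$ is unstable), backward-time trapping near the unstable node/focus $P_2$ together with the monotonicity forces $X_0(c)\to 1$ as $c\to c_0$. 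The remaining possibility, $X_0(c_0)=\infty$, must be handled: here continuity is interpreted in $[1,\infty]$, and one shows $X_0(c)\to\infty$ as $c\to c_0$ using that, by Proposition \ref{prop.monot0}, $X_0$ is monotone, so a finite limit would contradict the transversality/implicit-function argument at that finite value; alternatively, on the set where $X_0$ is finite it is continuous and monotone, and the endpoint behavior at the boundary of that set is read off from the sections already analyzed.

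\textbf{Main obstacle.} The delicate point is the behavior at the boundary parameters where the orbit switches between crossing the $X$-axis and running directly into $P_2$ — i.e. continuity of $X_1$ at a $c_0$ with $X_1(c_0)=1$ (and the analogue for $X_0$ at $X_0(c_0)=1$), together with the non-hyperbolic local analysis at $P_1$ feeding in continuous ``initial data'' for the global flow. The cleanest route is to lean on the strict monotonicity already proved (Propositions \ref{prop.monot1}, \ref{prop.monot0}): a monotone function on $\real$ can only fail to be continuous by a jump, and a jump is excluded by a trapping-region argument built from the barrier lines of Propositions \ref{prop.large} and \ref{prop.small} on one side and by transversality of the $X$-axis crossing on the other. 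I expect the write-up to reduce, after these reductions, to two short barrier constructions and one application each of the implicit function theorem and continuous dependence on parameters.
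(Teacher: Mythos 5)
Your overall strategy (monotone one-sided limits exist, so it suffices to exclude a jump; transversal crossings of $\{Y=0\}$ propagate continuity) is sound in spirit, but the step that actually does the work is missing. Your transversality/implicit-function argument at $(X_1(c_0),0)$ presupposes that the trajectory $l_1(c)$, viewed as arriving at a fixed regular cross-section near that point, depends continuously on $c$ -- and that in turn requires continuity in $c$ of the \emph{local} invariant object at $P_1$ from which $l_1(c)$ emanates. You assert this ("the trajectory $l_1(c)$ leaves $P_1$ \ldots in a way that depends continuously on $c$"), but it is precisely the hard point: $P_1$ is non-hyperbolic for $q>1$, the relevant trajectory is an unstable manifold for $c>0$, a center manifold for $c<0$, and a blown-up degenerate object at $c=0$, and no theorem you cite gives continuity of this family across $c=0$ (nor does monotonicity alone, which only yields existence of one-sided limits of the graphs $Y_{1,c}$, not that the limit is $Y_{1,c_0}$). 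Likewise, the "trapping-region argument built from the barrier lines of Propositions \ref{prop.large} and \ref{prop.small}" cannot exclude a jump at an arbitrary $c_0$: those barriers exist only for $c\geq kn+2\sqrt{p-q}$ and $c\leq-2\sqrt{p-q}$, not in the intermediate range where the homoclinic transition (and hence any putative jump) actually lives.

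The paper closes this gap by a different mechanism that never needs parameter-continuity of the local manifold at $P_1$. Assuming, say, $X_1:=\lim_{c\to c_0^+}X_1(c)>X_1(c_0)$, it runs the \emph{$c_0$-system} backward from the limit point $(X_1,0)$; by uniqueness of $l_1(c_0)$ as the only trajectory emanating from $P_1$ into $(0,\infty)^2$, this phantom trajectory cannot reach $P_1$, so it must either hit the $Y$-axis at a point with $Y>0$ or blow up to $Y\to\infty$, and in either case continuous dependence \emph{at regular points only} forces the nearby $l_1(c)$ (which do come from $P_1$ and are confined by the monotonicity of Proposition \ref{prop.monot1}) to do the same, a contradiction; the left-continuity case is handled symmetrically, with Poincar\'e--Bendixson and the stability type of $P_2$ ruling out the alternative that the phantom trajectory originates at $P_2$. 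If you want to salvage your version, you would either have to prove continuity in $c$ of the center/unstable manifold at $P_1$ uniformly across $c=0$ (delicate), or replace that step by the backward phantom-trajectory argument above. A smaller inaccuracy: $X_1(c_0)=1$ forces $P_2$ to be a stable \emph{node} (a stable focus makes $l_1$ spiral and cross $\{Y=0\}$ at some $X\neq1$ first), so "or lies in the stable-focus range" should be dropped.
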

\begin{proof}
The proof follows similar ideas as in \cite[Section 4]{ILS25}, and in fact an adaptation of the proofs therein leads to the conclusion of the continuous dependence of the whole trajectories $l_1(c)$ and $l_0(c)$. However, for the sake of completeness, we sketch the proof of the continuity of $X_1(c)$ and $X_0(c)$ with respect to $c$, giving a shorter argument than the one employed in the proof of \cite[Proposition 4.16]{ILS25}. We split the proof into two steps:

\medskip

\noindent \textbf{Step 1.} We prove the continuity from the right of the mapping $c\mapsto X_1(c)$. Pick $c_0\in\real$ and assume for contradiction that
$$
1\leq X_1(c_0)<X_1:=\lim\limits_{c\to c_0^+}X_1(c),
$$
where the existence of $X_1$ is ensured by the monotonicity of $X_1(c)$, according to Proposition \ref{prop.monot1}. Let $L_1(c_0)$ be the unique trajectory of the system \eqref{PSsyst} with parameter $c=c_0$ passing through the (non-critical) point $(X_1,0)$; more precisely, if $\xi_0\in\real$ is such that $(X,Y)(\xi_0)=(X_1,0)$, we focus on the part of the trajectory $(X,Y)(\xi)$ with $\xi<\xi_0$. Since $X_1(c_0)\geq1$ and the trajectory $l_1(c_0)$ is a separatrix for the system \eqref{PSsyst} with $c=c_0$, the direction of the flow of the system \eqref{PSsyst} across the $X$-axis prevents $L_1(c_0)$ to have crossed the $X$-axis before reaching $(X_1,0)$; that is, $Y(\xi)>0$ while $X(\xi)>0$ for any $\xi<\xi_0$. Moreover, the uniqueness of the trajectory $l_1(c_0)$ established in Propositions \ref{prop.P1}, \ref{prop.P1c0} and \ref{prop.P1q1} entails that $L_1(c_0)$ cannot connect to the critical point $P_1$. It thus follows that:

$\bullet$ either there is $\xi_1\in(-\infty,\xi_0)$ such that $X(\xi_1)=0$, $Y(\xi_1)>0$. In this case it follows by a continuity argument that there are $\delta>0$ and $\epsilon>0$ such that any trajectory of the system \eqref{PSsyst} with parameter $c\in(c_0,c_0+\delta)$ and passing through the (non-critical) point $(X,0)$ with $X\in(X_1,X_1+\epsilon)$, has to cross the $Y$-axis at some point with $Y>0$. This is a contradiction, since there is $\delta'\in(0,\delta)$ such that $X_1(c)\in(X_1,X_1+\epsilon)$ for $c\in(c_0,c_0+\delta')\subset(c_0,c_0+\delta)$ and thus the trajectories $l_1(c)$ with $c\in(c_0,c_0+\delta')$ would feature the same property.

$\bullet$ or there is $X_0\geq0$ such that $Y(\xi)\to\infty$ and $X(\xi)\to X_0$ as $\xi\to-\infty$. In this case, by a similar continuity argument we deduce that the trajectories $l_1(c)$ have to cross a fixed, sufficiently high, horizontal line for any $c\in(c_0,c_0+\delta)$ (for some $\delta>0$), while the monotonicity established in Proposition \ref{prop.monot1} establishes that the region covered by $l_1(c)$ for $c\in(c_0,c_0+\delta)$ in the quadrant $(0,\infty)^2$ is uniformly bounded, leading to a contradiction.

\medskip

\noindent \textbf{Step 2.} We prove now the continuity from the left of the mapping $c\mapsto X_1(c)$. Assume thus for contradiction that there is $c_0\in\real$ such that
$$
1<\overline{X}_1:=\lim\limits_{c\to c_0^-}X_1(c)<X_1(c_0).
$$
Let then $\overline{L}_1(c_0)$ be the unique trajectory of the system \eqref{PSsyst} with $c=c_0$ passing through the point $(\overline{X}_1,0)$. Let $\xi_0\in\real$ be such that $(X,Y)(\xi_0)=(\overline{X}_1,0)$ and focus on the part $(X,Y)(\xi)$ of the trajectory $\overline{L}_1(c_0)$ with $\xi\in(-\infty,\xi_0)$. The uniqueness of the trajectory $l_1(c_0)$ stemming from the origin established in Propositions \ref{prop.P1}, \ref{prop.P1c0} and \ref{prop.P1q1}, and the fact that $l_1(c_0)$ is a separatrix for the system \eqref{PSsyst} with parameter $c_0$, entail that the trajectory $\overline{L}_1(c_0)$ has only two possibilities:

$\bullet$ either there is $\xi_1\in(-\infty,\xi_0)$ such that $Y(\xi_1)=0$, $Y(\xi)>0$ for $\xi\in(\xi_1,\xi_0)$, and $X(\xi_1)\in(0,1)$. In this case, the continuity with respect to the parameter $c$ and the definition of $\overline{X}_1$ as a limit from the left readily lead to a contradiction, since this would imply that the trajectories $l_1(c)$ for $c$ in a sufficiently small left-neighborhood of $c_0$ would have crossed the $X$-axis at some positive value of $X$ before reaching the point $(X_1(c),0)$.

$\bullet$ or any point $(X,Y)(\xi)$ on the trajectory $\overline{L}_1(c_0)$ and with $\xi\in(-\infty,\xi_0)$ lies inside the closed region limited by the $X$-axis and the separatrix $l_1(c_0)$. Since the trajectory $\overline{L}_1(c_0)$ cannot come from $P_1$, this case is only possible (by an application of Poincar\'e-Bendixon's Theorem, \cite[Section 3.7]{Pe}) if $\overline{L}_1(c_0)$ directly starts from $P_2$. But this is only possible if $P_2$ is a stable node for $c=c_0$, that is, $c_0\geq kn+2\sqrt{p-q}$. The stability of $P_2$, the definition of $\overline{X}_1$ as limit from the left and the continuity with respect to $c$ ensure that there is $\delta>0$ such that the trajectories passing through $(X_1(c),0)$ in the system \eqref{PSsyst} with parameter $c$, for $c\in(c_0-\delta,c_0)$, also connect to $P_2$. But this is a contradiction with the fact that the unique trajectory passing through $(X_1(c),0)$ in the system \eqref{PSsyst} with parameter $c$ is exactly $l_1(c)$, arriving from $P_1$.

\medskip

The continuity of the mapping $c\mapsto X_0(c)$ from the left and from the right follows by analogous arguments as the ones above, with the only difference that we are in this case considering the part of the trajectory posterior to the crossing point on the $X$-axis. We leave the details to the reader.
\end{proof}
With this continuous dependence, we can complete the proof of the existence and uniqueness of a homoclinic orbit.
\begin{proposition}\label{prop.hom}
Let $n$, $p$, $q$ and $k$ as in \eqref{range.exp} such that $n<p+q+1$. There exists a unique parameter $c^*\in(0,kn)$ such that, for $c=c^*$, the system \eqref{PSsyst} has a unique homoclinic orbit starting and ending at the critical point $P_1$.
\end{proposition}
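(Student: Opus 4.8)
The plan is to reduce the existence of a homoclinic orbit at $P_1$ to an equality $X_1(c)=X_0(c)$ between the two crossing abscissae introduced in Section~\ref{sec.monot}, to locate the unique $c$ realizing this equality by continuity, and to establish the bounds $0<c^*<kn$ separately. The first — and most delicate — step is the characterization: \emph{for a given $c$, the system \eqref{PSsyst} admits a homoclinic orbit at $P_1$ if and only if $X_1(c)=X_0(c)$, in which case this common value is $>1$ and the homoclinic orbit is unique.} The ``if'' direction is immediate: if $X_1(c)=X_0(c)=:X_*\in(1,\infty)$, then $(X_*,0)$ is a non-critical point lying on both $l_1(c)$ and $l_0(c)$, so by uniqueness of trajectories these two orbits coincide; since $l_1(c)$ emanates from $P_1$ and $l_0(c)$ enters $P_1$, the resulting orbit is homoclinic to $P_1$, and it is the only one because every homoclinic must contain $l_1(c)$. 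The ``only if'' direction is the hard point: a homoclinic orbit $H$ is necessarily $l_1(c)=l_0(c)$, and one has to show that $H$ meets $\{Y=0\}$ at a single point of $\{X>0\}$ — which is then simultaneously the first zero $X_1(c)$ of $Y$ along $l_1(c)$ and the last zero $X_0(c)$ along $l_0(c)$. I would argue that $H\cup\{P_1\}$ is a Jordan curve whose bounded complementary component $D$ contains $P_2$ (and no other critical point), that $P_1$ sits at a corner of this curve with the positive $X$-axis entering $D$, and that on $\{Y=0\}$ the field is vertical — pointing upward for $X<1$ and downward for $X>1$ — so every crossing of the $X$-axis with $H$ is transversal, and is a local maximum of $X$ if $X>1$, a local minimum if $X<1$. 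Tracking inside/outside transitions of the ray $(0,\infty)\times\{0\}$ along the Jordan curve, with $P_2=(1,0)\in D$, then forces exactly one crossing, located at $X>1$. (An invariant-region argument along the lines of Lemma~\ref{lem.finit} and Proposition~\ref{prop.sublarge} gives the same conclusion.)

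Next I would set $c^*:=\sup\{c\in\real:\,X_1(c)\le X_0(c)\}$, with the convention $X_0(c)\in[1,\infty]$. By Propositions~\ref{prop.monot1} and~\ref{prop.monot0}, $X_1$ is non-decreasing and $X_0$ is non-increasing, so this set is a half-line $(-\infty,c^*]$; it is nonempty because for $c<-2\sqrt{p-q}$ Proposition~\ref{prop.small} gives $X_1(c)=1\le X_0(c)$, and it is bounded above because for $c\ge kn$ Proposition~\ref{prop.large} and Lemma~\ref{lem.finit} give $X_0(c)<X_1(c)$; hence $c^*\in\real$. The \emph{strict} inequality $X_0(kn)<X_1(kn)$ of Lemma~\ref{lem.finit}, combined with the continuity of $X_1$ and $X_0$ (Lemma~\ref{lem.cd}), forces $X_0(c)<X_1(c)$ for $c$ slightly below $kn$, hence $c^*<kn$. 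Continuity also yields $X_1(c^*)=X_0(c^*)$ (comparing the one-sided limits $X_1(c)\le X_0(c)$ for $c<c^*$ and $X_1(c)>X_0(c)$ for $c>c^*$); this value is finite, since $X_1(c^*)<\infty$ always, and is $>1$, since $c^*<kn$ makes $P_2$ a stable focus or node by Proposition~\ref{prop.P2}, so $P_2$ cannot be the $\alpha$-limit of $l_0(c^*)$ and $X_0(c^*)>1$. By the characterization, there is therefore a unique homoclinic orbit at $c=c^*$.

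For the lower bound $c^*>0$ I would use the Lyapunov functional $E:=\tfrac12 (f')^2+\tfrac1{p+1}f^{p+1}-\tfrac1{q+1}f^{q+1}$, which along \eqref{TWODE} satisfies $E'=(c-knf^{n-1})(f')^2$. For $c\le 0$ this is nonpositive and strictly negative wherever $f'\ne0$, so $E$ strictly decreases along $l_1(c)$ from the value $0$ at $P_1$ and along $l_0(c)$ down to the value $0$ at $P_1$; evaluating at the axis crossings, where $E=\tfrac{X^{p+1}}{p+1}-\tfrac{X^{q+1}}{q+1}$ is negative exactly for $X<(\tfrac{p+1}{q+1})^{1/(p-q)}$, we obtain $X_1(c)<(\tfrac{p+1}{q+1})^{1/(p-q)}<X_0(c)$ for every $c\le0$. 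Thus $(-\infty,0]\subset\{X_1\le X_0\}$ with strict inequality at $c=0$, and continuity gives $c^*>0$. Finally, uniqueness of the homoclinic parameter: by the characterization any homoclinic parameter $\hat c$ satisfies $X_1(\hat c)=X_0(\hat c)>1$; if two such parameters $\hat c_1<\hat c_2$ existed, then Proposition~\ref{prop.monot1} (applicable since $X_1(\hat c_1)>1$) gives $X_1(\hat c_1)<X_1(\hat c_2)$, while Proposition~\ref{prop.monot0} (applicable since $X_0(\hat c_2)\in(1,\infty)$) gives $X_0(\hat c_2)<X_0(\hat c_1)$, whence $X_1(\hat c_1)<X_1(\hat c_2)=X_0(\hat c_2)<X_0(\hat c_1)=X_1(\hat c_1)$, a contradiction. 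Hence $c^*\in(0,kn)$ is the unique parameter admitting a homoclinic orbit, and that orbit is unique.

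The main obstacle is the ``only if'' part of the characterization, namely ruling out a homoclinic loop that winds around $P_2$ more than once — which is genuinely possible on topological grounds and so requires the Jordan-curve/transversality bookkeeping (or, alternatively, an invariant-region argument à la Lemma~\ref{lem.finit}). Everything else reduces to the monotonicity results of Section~\ref{sec.monot}, the continuity of Lemma~\ref{lem.cd}, and the short Lyapunov-functional estimate needed to push $c^*$ strictly above $0$.
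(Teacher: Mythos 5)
Your proposal is correct and follows essentially the same route as the paper: existence and uniqueness of $c^*$ come from the monotonicity (Propositions \ref{prop.monot1}--\ref{prop.monot0}), the continuity of $X_1(c)$ and $X_0(c)$ (Lemma \ref{lem.cd}), the endpoint information of Lemma \ref{lem.finit} and Proposition \ref{prop.small}, and an intermediate value argument, while positivity of $c^*$ comes from the identity $E'=(c-knf^{n-1})(f')^2$ --- the paper integrates this over the putative homoclinic orbit with $c^*\le0$ to reach a contradiction, which is the same computation as your pointwise Lyapunov estimate. The only substantive difference is that you explicitly verify the ``only if'' direction, namely that a homoclinic loop must cross $\{Y=0\}$ at a single point with $X>1$ so that it forces $X_1(c)=X_0(c)$; the paper leaves this point implicit, although it is indeed what makes the Bolzano argument yield uniqueness of the parameter and not merely existence.
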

\begin{proof}
We infer from Lemma \ref{lem.finit} and Proposition \ref{prop.small} that, on the one hand, $X_1(c)>X_0(c)$ for any $c\geq kn$ and, on the other hand, $X_1(c)=1<X_0(c)$ for any $c\leq-2\sqrt{p-q}$. The opposite monotonicity of the functions $X_1(c)$ and $X_0(c)$ established in Propositions \ref{prop.monot0} and \ref{prop.monot1}, their continuity following from Lemma \ref{lem.cd} and an application of Bolzano's Theorem to the continuous function $c\mapsto X_1(c)-X_0(c)$ ensure that there is a unique value $c^*\in(-2\sqrt{p-q},kn)$ such that $X_1(c^*)=X_0(c^*)>1$. Thus, the trajectories $l_0(c^*)$ and $l_1(c^*)$ coincide, forming a homoclinic orbit stemming from the critical point $P_1$.

We are only left to prove that $c^*>0$. Once the existence of a homoclinic orbit established, we argue by contradiction and assume that $c^*\leq0$. We multiply by $f'$ the differential equation \eqref{TWODE} and integrate over $(-\infty,\infty)$ to find
\begin{equation}\label{interm14}
\int_{-\infty}^{\infty}f'(\xi)f''(\xi)d\xi=\int_{-\infty}^{\infty}(c^*-knf^{n-1}(\xi))(f'(\xi))^2\,d\xi-\int_{-\infty}^{\infty}(f^p(\xi)-f^q(\xi))f'(\xi)\,d\xi.
\end{equation}
Since we are dealing with a homoclinic orbit at $P_1=(0,0)$, that is, $X(\xi)\to0$ and $Y(\xi)\to0$ both as $\xi\to-\infty$ and as $\xi\to\infty$, it follows from \eqref{PSchange} that
$$
\lim\limits_{\xi\to\pm\infty}f(\xi)=\lim\limits_{\xi\to\pm\infty}f'(\xi)=0.
$$
Taking into account these limits, we infer from \eqref{interm14} by integration that
$$
\int_{-\infty}^{\infty}(c^*-knf^{n-1}(\xi))(f'(\xi))^2\,d\xi=0,
$$
which is an obvious contradiction since the integrand is negative, by the assumption that $c^*\leq0$. This contradiction implies that $c^*>0$ and completes the proof.
\end{proof}
Observe that Theorem \ref{th.1} follows from Proposition \ref{prop.hom} and the local behavior established in Proposition \ref{prop.P1}, part (a). We plot in Figure \ref{fig2} the unique homoclinic orbit, whose existence is established in this section.

\begin{figure}[ht!]
  \begin{center}
  \includegraphics[width=11cm,height=7.5cm]{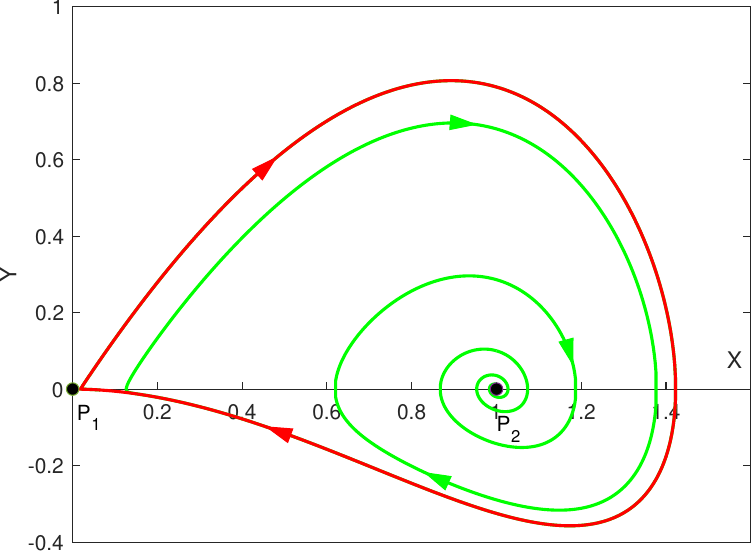}
  \end{center}
  \caption{An approximation of the homoclinic orbit from $P_1$. Experiment for $n=2$, $p=3$, $q=2$, $k=1$ and $c=1.68$.}\label{fig2}
\end{figure}

\section{Uniqueness of the limit cycle and phase portrait for $c<kn$}

Still under the assumption that $n<p+q+1$, we have seen in Proposition \ref{prop.P2} that a unique limit cycle appears locally in a neighborhood of the critical point $P_2$ for $c<kn$, according to Hopf's Theorem \cite[Theorem 1, Section 4.4]{Pe}. We next show that this limit cycle is unique and exists down to $c=c^*$, where $c^*$ is the unique value of the parameter for which a homoclinic orbit at $P_1$ exists, according to Proposition \ref{prop.hom}.
\begin{proposition}\label{prop.cycles}
Let $n$, $p$, $q$ and $k$ as in \eqref{range.exp} such that $n<p+q+1$. Then, for any $c\in(c^*,kn)$, a unique limit cycle exists in the system \eqref{PSsyst} with parameter $c$, and this cycle is unstable. For any $c\in(-\infty,c^*)$, the system \eqref{PSsyst} has no limit cycles.
\end{proposition}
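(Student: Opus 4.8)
The plan is to split the statement into three parts --- \emph{uniqueness} of a limit cycle, \emph{existence} of one for $c\in(c^*,kn)$, and \emph{non-existence} of limit cycles for $c<c^*$ --- obtaining uniqueness from a Li\'enard-type criterion applied to \eqref{Lienardsyst2}, existence by continuing the Hopf cycle of Proposition \ref{prop.P2}, and non-existence from an energy identity together with the global structure of the phase portrait already established.

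\textbf{Uniqueness.} First I would note that every limit cycle of \eqref{PSsyst} lies in $\{X>0\}$ and must enclose an equilibrium of index one; since $P_1=(0,0)$ lies on $\{X=0\}$ it cannot be surrounded, so every limit cycle encircles $P_2$. I would then use the Li\'enard form \eqref{Lienardsyst2}, with $F(\widetilde X)=k(1+\widetilde X)^n-k-c\widetilde X$ and $g(\widetilde X)=(1+\widetilde X)^p-(1+\widetilde X)^q$ --- for which $\widetilde Xg(\widetilde X)>0$ on $(-1,0)\cup(0,\infty)$ was already checked in the proof of Lemma \ref{lem.nocycle2} --- and apply a uniqueness criterion of Zhang Zhifen type (see \cite{Pe,Han92}), valid on the half-line $\widetilde X\in(-1,\infty)$ just as the non-existence criterion of Lemma \ref{lem.nocycle2}. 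The remaining hypothesis is the monotonicity of
$$
\frac{F'(\widetilde X)}{g(\widetilde X)}=\frac{kn(1+\widetilde X)^{n-1}-c}{(1+\widetilde X)^p-(1+\widetilde X)^q}
$$
on each of the intervals $(-1,0)$ and $(0,\infty)$. Setting $X=1+\widetilde X$ and clearing denominators, this reduces to a polynomial inequality to be treated in the spirit of Lemmas \ref{lem.calc} and \ref{lem.nocycle}, the condition $n<p+q+1$ entering just as in Step 1 of the proof of Lemma \ref{lem.nocycle}; one finds the ratio to be strictly decreasing, so that for every $c<kn$ the system \eqref{PSsyst} has at most one limit cycle, and when it exists it is unstable.

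\textbf{Existence for $c\in(c^*,kn)$.} By Proposition \ref{prop.P2}, since $\sigma>0$ for $n<p+q+1$, the Hopf bifurcation at $c=kn$ is subcritical, so for $c$ slightly below $kn$ there is a small unstable limit cycle around the stable focus $P_2$. As \eqref{PSsyst} is a (semi-)rotated family of vector fields in the parameter $c$ --- indeed $X'\,\partial_cY'-Y'\,\partial_cX'=Y^2\ge0$, whose quantitative counterpart is the monotonicity in $c$ from Propositions \ref{prop.monot1}--\ref{prop.monot0} --- this cycle persists and expands monotonically as $c$ decreases, and can only disappear by shrinking to $P_2$ (requiring a Hopf point, impossible for $c<kn$ by Proposition \ref{prop.P2}), by growing onto a homoclinic or heteroclinic loop, or by merging with a second cycle at a semistable value (impossible by the uniqueness just proved). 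The only homoclinic orbit for $c<kn$ is the one at $c=c^*$ (Proposition \ref{prop.hom}), and no heteroclinic loop is available since nothing leaves the stable equilibrium $P_2$ when $c<kn$; hence the cycle survives for every $c\in(c^*,kn)$, disjoint from the separatrices $l_1(c)$ and $l_0(c)$, and collapses onto the homoclinic loop at $P_1$ as $c\downarrow c^*$. Its instability follows because the characteristic exponent $\oint_\Gamma(c-knf^{n-1})\,d\xi$ varies continuously with $c$ and cannot vanish (a semistable cycle would violate uniqueness), so it keeps the positive sign it has near $c=kn$.

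\textbf{Non-existence for $c<c^*$.} For $c\le0$ this is immediate: if $f$ were the profile of a limit cycle of period $T$, multiplying \eqref{TWODE} by $f'$ and integrating over a period annihilates the exact-derivative contributions and leaves
$$
\int_0^T\big(c-knf^{n-1}(\xi)\big)\big(f'(\xi)\big)^2\,d\xi=0,
$$
which is impossible because the integrand is non-positive and not identically zero. For $c\in(0,c^*)$ I would argue globally: by uniqueness there is at most one cycle at each $c$, and by the existence step the branch of cycles issuing from the Hopf point at $c=kn$ exists exactly for $c\in(c^*,kn)$ and terminates at $c^*$ on the homoclinic loop; any limit cycle with $c<c^*$ would then lie on a separate branch, requiring a further Hopf point or a semistable-cycle value --- both excluded, by Proposition \ref{prop.P2} and by uniqueness, respectively. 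Equivalently, for $c<c^*$ one has $X_1(c)<X_0(c)$ (from the opposite monotonicities of $X_1$ and $X_0$ and $X_1(c^*)=X_0(c^*)$), and a short invariant-region argument shows that $l_1(c)$ and $l_0(c)$ then leave no room for a closed orbit around $P_2$.

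\textbf{Main obstacle.} I expect the hard part to be the uniqueness step --- verifying the monotonicity of $F'/g$ \emph{uniformly in} $c$, the parameter-dependent analogue of the calculus behind Lemma \ref{lem.nocycle}, which is also what forces the dichotomy at $n=p+q+1$ --- together with ruling out limit cycles for $c\in(0,c^*)$, where no local criterion applies and one must rely on the rotated-field structure and the homoclinic barrier at $c=c^*$.
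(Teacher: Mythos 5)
Your overall architecture (uniqueness, continuation of the Hopf cycle, termination on the homoclinic loop at $c^*$, non-existence beyond) matches the paper's, and your existence step is essentially the paper's argument: the determinant computation $X'\,\partial_cY'-Y'\,\partial_cX'=(c_2-c_1)Y^2\ge0$ is exactly how the paper verifies that \eqref{PSsyst} is a family of rotated vector fields, and the trichotomy for how the family of cycles can terminate is taken from \cite{Han99}. The genuine gap is in your \textbf{uniqueness} step. You replace the paper's route (uniqueness of the cycle generated by the Hopf bifurcation via \cite[Theorem 2.3]{Han99}, plus the non-intersection property \cite[Theorem 2.1]{Han99} to exclude cycles on other branches) by a Zhang-Zhifen/Li\'enard criterion whose key hypothesis --- monotonicity of $F'/g$ on $(-1,0)$ and on $(0,\infty)$ --- you never verify, deferring it to ``a polynomial inequality in the spirit of Lemma \ref{lem.calc}.'' This hypothesis is in fact false in part of the admissible range: with $X=1+\widetilde X$ the ratio is $(knX^{n-1}-c)/(X^p-X^q)$, which tends to $+\infty$ as $X\to1^+$; if $n-1>p$ (compatible with \eqref{range.exp} and $n<p+q+1$, e.g.\ $p=3$, $q=2$, $n=5$) it also tends to $+\infty$ as $X\to\infty$, so it cannot be monotone on $(1,\infty)$. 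So the criterion as stated does not apply uniformly, and the cornerstone of your proof is missing. The instability of the cycle also cannot be read off from ``it keeps the sign it has near $c=kn$'' without first knowing the characteristic exponent never vanishes, which is again uniqueness/hyperbolicity --- the very point at issue.

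The non-existence step for $c\in(0,c^*)$ has a second problem. The claim that a cycle at some $\overline c<c^*$ ``would lie on a separate branch requiring a further Hopf point or a semistable value'' is precisely what the rotated-vector-field theory makes rigorous (the paper uses \cite[Theorem 2.4 (ii)]{Han99} on the interior side of the homoclinic loop together with the non-intersection theorem); as stated it presupposes that every limit cycle belongs to a continuable branch, which is not automatic. Your fallback --- that for $c<c^*$ one has $X_1(c)<X_0(c)$ and the separatrices ``leave no room'' for a closed orbit around $P_2$ --- is circular: the region bounded by $l_1(c)$, $l_0(c)$ and the vertical segment at $X_1(c)$ is positively invariant and \emph{contains} $P_2$, and the paper only concludes in Proposition \ref{prop.upsmall} that $l_1(c)$ ends at $P_2$ (rather than at a cycle inside that region) by first invoking the non-existence of limit cycles from this very proposition. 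The energy identity for $c\le0$ is fine but covers only part of the range. To close both gaps you should follow the paper and work entirely within Han's rotated-vector-field framework rather than through a Li\'enard uniqueness criterion.
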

Before giving the proof, let us recall briefly the notion of rotated vector fields according to Han \cite{Han99}, which generalizes previous results by Duff \cite{Duff53} and Perko \cite{Pe75, Pe93}. We thus give the following definition (according to \cite[Definition 1.6]{Han99}):
\begin{definition}\label{def.rvf}
Let $f(x,\lambda)$ be a function defined and analytic on a set $D\times I$, where $D\subseteq\real^2$ is a connected set and $I\subseteq\real$ is an interval. Assume that the critical points lying inside $D$ of the autonomous dynamical system (depending on the parameter $\lambda$)
\begin{equation}\label{dynsyst}
\frac{dx}{dt}=f(x,\lambda)
\end{equation}
remain fixed. We say that the system \eqref{dynsyst} defines a family of \emph{rotated vector fields} with $(x,\lambda)\in D\times I$ if for any $(x_0,\lambda_0)\in D\times I$, with $x_0$ a regular point, there are a neighborhood $D_0\subset D$ of $x_0$ and some $\epsilon>0$ such that $\lambda_0+\epsilon\in I$,
\begin{equation}\label{cond1}
{\rm det}(f(x,\lambda_0),f(x,\lambda))\geq0 \ ({\rm or} \ \leq0), \quad x\in D_0, \quad \lambda\in(\lambda_0,\lambda_0+\epsilon)
\end{equation}
and
\begin{equation}\label{cond2}
{\rm det}(f(x,\lambda_0),f(x,\lambda))\not\equiv0, \quad \lambda\in(\lambda_0,\lambda_0+\epsilon).
\end{equation}
\end{definition}
The rotated vector fields have very strong properties related to the existence, non-existence or uniqueness of limit cycles, according to the results in \cite{Han99}. We are now ready to apply them in order to prove Proposition \ref{prop.cycles}.
\begin{proof}[Proof of Proposition \ref{prop.cycles}]
Let $D=\{(X,Y)\in\real^2: X>0\}$ and $I=\real$ and pick
$$
f(X,Y,c):=(Y,cY-knX^{n-1}Y-X^p+X^q).
$$
We observe that $f$ is analytic on $D\times I$ and ${\rm det}(f(X,Y,c_1),f(X,Y,c_2))=(c_2-c_1)Y^2\geq0$ for any $c_2>c_1\in\real$. Thus, our system \eqref{PSsyst} defines a family of rotated vector fields according to Definition \ref{def.rvf}. Moreover, letting $c_0=kn$, we recall from Proposition \ref{prop.P2} and Hopf's Theorem \cite[Theorem 1, Section 4.4]{Pe} that a (locally unique) unstable limit cycle is generated for $c<kn$ around the critical point $P_2$ and for $c<kn$, the critical point $P_2$ is a stable focus. We thus infer from \cite[Theorem 2.3]{Han99} that the limit cycle generated by the Hopf bifurcation is unique in $D$.

Denoting by $\mathcal{L}(c)$ this unique limit cycle for $c<kn$, we further deduce from \cite[Theorem 2.5]{Han99} that there exists $c_*\in\real$ and some curve (that could be reduced to a single point) $\mathcal{L}^*$ (which is the boundary of the connected region covered by the limit cycles $\mathcal{L}(c)$ for $c\in(c_*,kn)$) such that $\mathcal{L}(c)\to\mathcal{L}^*$ as $c\to c_*$, $c>c_*$, and that the limit $\mathcal{L}^*$ can only be either a single critical point, a singular closed orbit or an unbounded invariant curve. Since the limit cycle is originated from the critical point $P_2$ as $c<kn$, the first of these three alternatives is impossible due to \cite[Theorem 2.2]{Han99}, which states that the cycle moves in a monotone way as $c$ decreases (and thus it cannot ``go back" to extinguish at $P_2$ for some different value of $c$). The third alternative is as well impossible, since it follows from Propositions \ref{prop.monot1} and \ref{prop.monot0} that $\mathcal{L}(c)$ is necessarily contained inside the region limited by the trajectories $l_1(kn)$ for $X\in(0,X_1(kn))$, $l_0(c_*)$ for $X\in(0,X_0(c_*))$ and the vertical line $X=X_1(kn)$ (if $X_1(kn)<X_0(c_*)$) or $X=X_0(c_*)$ (if $X_0(c_*)<X_1(kn)$), which is a bounded closed region. Thus, the only remaining possibility is that $\mathcal{L}^*$ is a singular closed orbit and the uniqueness of a homoclinic orbit proved in Proposition \ref{prop.hom} entails that $c_*=c^*$ and $\mathcal{L}^*$ is the unique homoclinic orbit existing for $c=c^*$.

We are only left to prove the non-existence of limit cycles for $c<c^*$. We first observe that, according to \cite[Theorem 1, Section 3.4]{Pe}, the Poincar\'e map is well defined on the interior side of the homoclinic orbit $\mathcal{L}^*$ for $c=c^*$. Thus, the existence of limit cycles for $c>c^*$ (as explained in the previous paragraph) and \cite[Theorem 2.4 (ii)]{Han99} imply that no limit cycle can exist on the other side of $c^*$, that is, for $c\in(c^*-\epsilon,c^*)$ for some $\epsilon>0$. Moreover, we deduce from Propositions \ref{prop.monot1} and \ref{prop.monot0} that $X_1(c)<X_0(c)$ for $c<c^*$. Assume for contradiction that a limit cycle $\mathcal{L}(\overline{c})$ appears at a parameter $\overline{c}<c^*$. The flow of the system \eqref{PSsyst} across the $X$-axis ensures that any limit cycle surrounding the critical point $P_2$ has the same orientation and thus the non-intersection result \cite[Theorem 2.1]{Han99} readily gives that there exists $c\in(c^*,kn)$ such that $\mathcal{L}(c)=\mathcal{L}(\overline{c})$. But, since both limit cycles $\mathcal{L}(c)$ and $\mathcal{L}(\overline{c})$ are solutions to the system \eqref{PSsyst}, restricting ourselves only to the positive bump (that is, the part of them contained in the positive cone $(0,\infty)^2$) of the two cycles and expressing this part as a graph of a function $Y(X)$, the monotonicity of $Y'(X)$ with respect to the parameter $c$ contradicts the equality of these two identical solutions existing for $\overline{c}<c$. This proves the non-existence of any limit cycle for $c<c^*$.
\end{proof}
We are now in a position to complete the proof of Theorem \ref{th.2} by describing the phase portrait of the system \eqref{PSsyst} in the two remaining cases.
\begin{proposition}\label{prop.upsmall}
For any $c\in(c^*,kn)$, the trajectory $l_0(c)$ connects the unique, unstable limit cycle $\mathcal{L}(c)$ to the critical point $P_1$. For any $c<c^*$, the trajectory $l_1(c)$ connects the critical points $P_1$ and $P_2$.
\end{proposition}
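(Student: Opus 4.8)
The plan is to use the region $\Omega^*$ enclosed by the homoclinic orbit $\mathcal{L}^*$ obtained in Proposition \ref{prop.hom} (for the parameter $c^*$) as a barrier for the flows of \eqref{PSsyst} with parameters $c$ near, but different from, $c^*$. The key point, already noted in the proof of Proposition \ref{prop.cycles}, is that \eqref{PSsyst} is a family of rotated vector fields with ${\rm det}(f(X,Y,c_1),f(X,Y,c_2))=(c_2-c_1)Y^2$; consequently, writing the velocity at parameter $c$ as the velocity at parameter $c^*$ plus the vertical vector $(0,(c-c^*)Y)$, I would check that the $c$-flow crosses the curve $\mathcal{L}^*$ transversally, pointing into $\Omega^*$ along its first-quadrant arc $l_1(c^*)$ (where $Y>0$) and along its fourth-quadrant arc $l_0(c^*)$ (where $Y<0$) when $c<c^*$, and pointing out of $\Omega^*$ along both arcs when $c>c^*$ (at the corner $(X_1(c^*),0)$ one has $Y'=X^q-X^p<0$, consistent with this, and at $P_1$ the inequality degenerates since $Y=0$). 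Hence $\Omega^*$ is positively invariant for the $c$-flow if $c<c^*$ and negatively invariant if $c>c^*$. I would set down this sign computation first.

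For $c<c^*$: by the monotonicity in Proposition \ref{prop.monot1}, $l_1(c)$ enters the first quadrant below $l_1(c^*)$, hence inside $\Omega^*$, and by positive invariance it remains in the compact set $\overline{\Omega^*}$ for all forward times (the sub-case $X_1(c)=1$, in which $l_1(c)$ simply reaches $P_2$ within the positive cone, is immediate). Its $\omega$-limit set is therefore a nonempty, compact, connected, invariant subset of $\overline{\Omega^*}$, a region whose only critical points are $P_1$ and $P_2$. By Proposition \ref{prop.cycles} there is no limit cycle for $c<c^*$, and $\mathcal{L}^*$ is a trajectory of the $c^*$-system but not invariant under the $c$-flow, so the $\omega$-limit set can be neither a periodic orbit nor a graphic; and $P_2$, being a stable node or focus (Proposition \ref{prop.P2}), cannot lie on a graphic either. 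By the Poincar\'e-Bendixon Theorem the $\omega$-limit set is thus a single critical point, and it cannot be $P_1$: since $l_1(c)$ already emanates from $P_1$ as $\xi\to-\infty$, converging to $P_1$ also as $\xi\to+\infty$ would produce a homoclinic orbit, contradicting the uniqueness in Proposition \ref{prop.hom} since $c\neq c^*$. Hence the $\omega$-limit set is $P_2$, i.e., $l_1(c)$ connects $P_1$ to $P_2$.

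For $c\in(c^*,kn)$: first, $l_0(c)$ lies outside the limit cycle $\mathcal{L}(c)$, for otherwise it would be trapped inside $\mathcal{L}(c)$ and its $\omega$-limit would be the stable point $P_2$, contradicting that $l_0(c)$ enters $P_1$. Near $P_1$, $l_0(c)$ approaches along the center manifold $Y\sim -X^q/c$, which for $c>c^*>0$ lies above the corresponding curve $Y\sim -X^q/c^*$ of $l_0(c^*)$, hence inside $\Omega^*$; by the monotonicity in Proposition \ref{prop.monot0} this persists, so $l_0(c)\subset\Omega^*$, and by negative invariance of $\Omega^*$ the whole backward orbit of $l_0(c)$ stays in the compact set $\overline{\Omega^*}\setminus\mathrm{int}(\mathcal{L}(c))$, which contains $P_1$ as its only critical point (since $\mathcal{L}(c)$ surrounds $P_2$ and lies inside $\Omega^*$). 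The $\alpha$-limit set of $l_0(c)$ is then a nonempty, compact, connected, invariant subset of this region; it cannot contain $P_1$ (no homoclinic orbit for $c\neq c^*$) nor be a graphic (no saddle is available), so by the Poincar\'e-Bendixon Theorem it is a periodic orbit, which by the uniqueness in Proposition \ref{prop.cycles} is $\mathcal{L}(c)$. Therefore $l_0(c)$ connects $\mathcal{L}(c)$ to $P_1$.

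I expect the delicate step to be the transversality and invariance of $\Omega^*$ under the nearby-parameter flows: one must make the rotated-field crossing precise along both arcs of $\mathcal{L}^*$ and handle the non-hyperbolic point $P_1\in\partial\Omega^*$, where the crossing inequality becomes an equality and where the relevant trajectory actually enters $\Omega^*$. Once that is settled, the monotonicity results, the uniqueness of $\mathcal{L}(c)$ and of $\mathcal{L}^*$, and the stability of $P_2$ reduce the remainder to a routine application of the Poincar\'e-Bendixon dichotomy.
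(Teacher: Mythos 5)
Your argument is correct and reaches the same endgame as the paper (Poincar\'e--Bendixson in a compact invariant region, combined with the uniqueness of the limit cycle for $c\in(c^*,kn)$, its non-existence for $c<c^*$, the uniqueness of the homoclinic orbit, and the stability of $P_2$), but the trapping region is built differently. The paper uses, for each fixed $c$, the regions $\mathcal{R}$ and $\mathcal{R}'$ bounded by the two separatrices $l_1(c)$ and $l_0(c)$ of the \emph{same} system together with a vertical segment at $X_1(c)$ or $X_0(c)$; their invariance is automatic because trajectories cannot cross other trajectories of the same flow, and the only input is the ordering of $X_1(c)$ versus $X_0(c)$ on either side of $c^*$. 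You instead use the fixed homoclinic loop $\mathcal{L}^*$ of the $c^*$-system as a barrier for all nearby parameters, which forces you to verify the rotated-vector-field transversality $\det(f(\cdot,c^*),f(\cdot,c))=(c-c^*)Y^2$ along both arcs and to treat the two degenerate boundary points (the corner $(X_1(c^*),0)$, where the fields coincide and one must compare second-order behaviour of the two trajectories, and the non-hyperbolic point $P_1$, which cannot be crossed since it is critical for both flows). You have identified these as the delicate steps and your sign analysis is right, so the proof goes through; what your version buys is a single $c$-independent geometric picture in which $l_1(c)$, $l_0(c)$ and $\mathcal{L}(c)$ are all nested inside $\Omega^*$, at the cost of the extra transversality bookkeeping that the paper's per-$c$ construction avoids. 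One further point worth making explicit in your write-up: the inclusion $\mathcal{L}(c)\subset\Omega^*$ for $c\in(c^*,kn)$, which you use to isolate $P_1$ as the only critical point of the annular region, is exactly the statement from Proposition \ref{prop.cycles} that the cycles expand monotonically and accumulate on $\mathcal{L}^*$ from inside.
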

\begin{proof}
Let first $c\in(c^*,kn)$. We know that $1<X_0(c)<X_1(c)<\infty$. Consider thus once more the negatively invariant region $\mathcal{R}'$ plotted in Figure \ref{fig0} and introduced in the proof of Proposition \ref{prop.sublarge}, limited by the trajectory $l_0(c)$, that is, the curve $Y=Y_{0,c}(X)$ for $X\in(0,X_0(c))$, the trajectory $l_1(c)$, that is, the curve $Y=Y_{1,c}(X)$ for $X\in(0,X_0(c))$, and the vertical segment $X=X_0(c)$ for $Y\in[0,Y_{1,c}(X_0(c))]$. We deduce as in the proof of Proposition \ref{prop.sublarge} that the trajectory $l_0(c)$ intersects the $X$-axis at $(X_0(c),0)$ arriving from the interior of the region $\mathcal{R}'$. An application of the Poincar\'e-Bendixon's Theorem then gives that the $\alpha$-limit of $l_0(c)$ is the unstable limit cycle $\mathcal{L}(c)$, as claimed.

Let now $c<c^*$, that is, $X_1(c)<X_0(c)$. We consider again the positively invariant region $\mathcal{R}$ as in the proof of Lemma \ref{lem.finit}, plotted in Figure \ref{fig0} and limited by the trajectory $Y=Y_{1,c}(X)$ for $X\in(0,X_1(c))$, the vertical line $X=X_1(c)$ for $Y\in[Y_{0,c}(X_1(c)),0]$ (that is, going down from $Y=0$ up to the intersection with the trajectory $l_0(c)$) and the trajectory $Y=Y_{0,c}(X)$ for $X\in(0,X_1(c))$. If $X_1(c)>1$, then the trajectory $l_1(c)$ continues after the point $(X_1(c),0)$ by entering the region $\mathcal{R}$. Its positive invariance, the non-existence of limit cycles for $c<c^*$ proved in Proposition \ref{prop.cycles} and an application of the Poincar\'e-Bendixon's Theorem readily imply that the trajectory $l_1(c)$ should end in the stable focus or node $P_2$. If $X_1(c)=1$, the mere definition of $X_1(c)$ ensures that the trajectory $l_1(c)$ connects $P_1$ to $P_2$, completing the proof.
\end{proof}
We are now ready to complete the proof of Theorem \ref{th.2}.
\begin{proof}[Proof of Theorem \ref{th.2}]
We list below a recap of all the results established in the previous propositions, differing with respect to the range of the parameter $c$, assuming that $n<p+q+1$.

$\bullet$ If $c\in[kn,\infty)$, Propositions \ref{prop.large} and \ref{prop.sublarge} give that for any such $c$ there is a unique trajectory of the system \eqref{PSsyst}, that is $l_0(c)$, connecting $P_2$ to $P_0$. In terms of traveling wave profiles, there exists a unique (up to a translation) traveling wave profile with speed $c$ such that
$$
\lim\limits_{\xi\to-\infty}f(\xi)=1, \quad \lim\limits_{\xi\to\infty}f(\xi)=0,
$$
and the local behavior as $\xi\to\infty$ can be made more precise as it follows from Proposition \ref{prop.P1}. Moreover, the traveling wave profile $f$ is decreasing if $c\geq kn+2\sqrt{p-q}$ and has damped oscillations as $\xi\to-\infty$ if $c\in[kn,kn+2\sqrt{p-q})$.

$\bullet$ If $c\in(c^*,kn)$, it follows from Propositions \ref{prop.upsmall} and \ref{prop.cycles} that there is a unique (up to a space translation) periodic traveling wave profile $g$ with speed $c$ oscillating around the value one and there is a unique (up to a space translation) traveling wave profile $f$ with speed $c$ presenting non-damped oscillations as $\xi\to-\infty$ and such that $f(\xi)\to0$ as $\xi\to\infty$, with a more precise local behavior given by Proposition \ref{prop.P1}.

$\bullet$ If $c=c^*$, it follows from Proposition \ref{prop.hom} that there is a unique (up to a space translation) traveling wave profile $f$ with speed $c$ such that
$$
\lim\limits_{\xi\to-\infty}f(\xi)=\lim\limits_{\xi\to\infty}f(\xi)=0,
$$
with a more precise local behavior at both ends following from Proposition \ref{prop.P1}.

$\bullet$ If $c\in(-\infty,c^*)$, it follows from Propositions \ref{prop.upsmall} and \ref{prop.small} that there is a unique (up to a space translation) traveling wave profile $f$ with speed $c$ such that
$$
\lim\limits_{\xi\to-\infty}f(\xi)=0, \quad \lim\limits_{\xi\to\infty}f(\xi)=1.
$$
Moreover, Proposition \ref{prop.small} ensures that the traveling wave profile $f$ is increasing at least if $c\leq-2\sqrt{p-q}$.

By an inspection of the previous proofs, we observe that, if we assume $n>p+q+1$, the only change with respect to the above structure is that in this case $c^*>kn$, the limit cycle bifurcates at $c>kn$ and is a stable limit cycle according to Hopf's Theorem \cite[Theorem 1, Section 4.4]{Pe}, but the previous classification of the traveling wave profiles remains valid (except for the order between $kn$ and $c^*$). Finally, if $n=p+q+1$, then the critical point $P_2$ is a multiple focus and several limit cycles can bifurcate at either $c<kn$ or $c>kn$, but the uniqueness of $c^*$ and the classification of the traveling waves remain in force as well.
\end{proof}
We show in the next two pictures a sample of the phase portrait for $c\in(c^*,kn)$ and for $c\in(0,c^*)$. 

\begin{figure}[ht!]
  \begin{center}
  \subfigure[Typical phase portrait for $c\in(0,c^*)$]{\includegraphics[width=7cm,height=6cm]{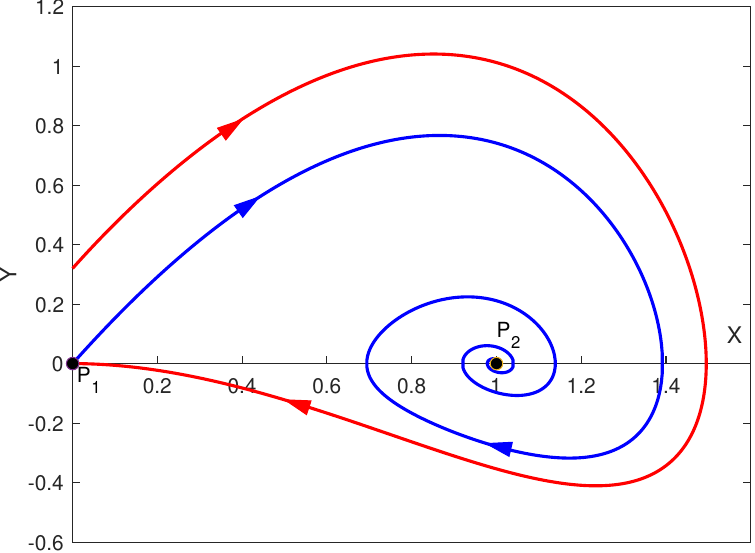}}
  \subfigure[Typical phase portrait for $c\in(c^*,kn)$]{\includegraphics[width=7cm,height=6cm]{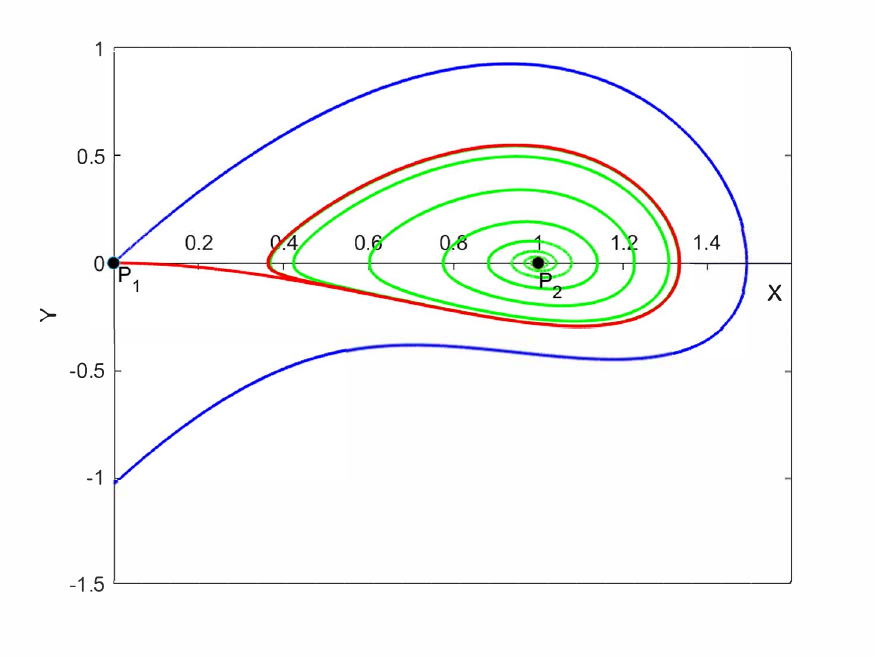}}
  \end{center}
  \caption{Phase portraits with the trajectories $l_1(c)$ and $l_0(c)$ for $c\in(0,c^*)$ (left) and $c\in(c^*,kn)$ (right). Experiments for $n=2$, $p=3$, $q=2$, $k=1$ and $c=1.6$ (left), respectively $c=1.8$ (right).}\label{fig3}
\end{figure}

\section{Appendix. Proof of Proposition \ref{prop.P1c0}}

This final section is devoted to the proof of Proposition \ref{prop.P1c0}, postponed from the main text due to its technical character. Let us thus take $c=0$, and we wish to analyze the origin of the dynamical system
\begin{equation}\label{a1}
\left\{\begin{array}{ll}X'=Y, \\Y'=-knX^{n-1}Y-X^p+X^q,\end{array}\right.
\end{equation}
\begin{proof}[Proof of Proposition \ref{prop.P1c0}]
We notice that the critical point $P_1=(0,0)$ is non-hyperbolic and the linearization of the system in a neighborhood of it has both eigenvalues equal to zero. Thus, new transformations are required in order to reduce the singularity at this point by mapping it into critical points of new, different systems that can be analyzed by standard techniques. Since $(X,Y)\sim(0,0)$, in the second equation of the system \eqref{a1} we have a competition for the dominating order between two terms, $X^q$ and $-knX^{n-1}Y$, and a formal analysis of when one or the other term is dominating leads to the three different cases and transformations that will be considered below.

\medskip

\noindent \textbf{Case 1: $n>(q+1)/2$.} We introduce first the new variable $V=YX^{-(q+1)/2}$. Straightforward calculations starting from the system \eqref{a1} lead to the following system in terms of $(X,V)$
$$
\left\{\begin{array}{ll}X'=X^{(q+1)/2}V, \\V'=-knX^{n-1}V-X^{p-(q+1)/2}+X^{(q-1)/2}-\frac{q+1}{2}V^2X^{(q-1)/2}.\end{array}\right.
$$
After multiplying both equations of the previous system by $X^{(1-q)/q}$ and introducing an implicit change of the independent variable in the form $d\eta/d\xi=X^{(q-1)/q}$, we arrive at the new system
\begin{equation}\label{a2}
\left\{\begin{array}{ll}X'=XV, \\V'=-knX^{n-1+(1-q)/2}V-X^{p-q}+1-\frac{q+1}{2}V^2,\end{array}\right.
\end{equation}
where the derivatives are understood with respect to the new variable $\eta$. We observe that both powers of $X$ in the right hand side of the second equation are positive since $n>(q+1)/2$, but they might not be of class $C^1$. In order to fix this problem and thus be able to apply the standard theory, we replace in \eqref{a2} the variable $X$ by
$$
U:=X^{\mu}, \quad \mu:=\min\left\{p-q,n-1+\frac{1-q}{2}\right\}>0.
$$
Let us consider, for example, that
\begin{equation}\label{a4}
n-1+\frac{1-q}{2}\geq p-q, \quad {\rm that \ is}, \quad n\geq p+\frac{1-q}{2}
\end{equation}
In this case, we have $\mu=p-q$ and we obtain the new system
\begin{equation}\label{a3}
\left\{\begin{array}{ll}U'=(p-q)UV, \\V'=-knU^{(2n-1-q)/(p-q)}V-U+1-\frac{q+1}{2}V^2.\end{array}\right.
\end{equation}
Since $X=0$, it follows that $U=0$ and we are thus looking for the critical points of the system \eqref{a3} with $U=0$. We have two such critical points, namely
$$
Q_1=\left(0,\sqrt{\frac{2}{q+1}}\right), \quad Q_2=\left(0,-\sqrt{\frac{2}{q+1}}\right).
$$
The linearization of the system \eqref{a3} in a neighborhood of the points $Q_1$ and $Q_2$ has the matrix
$$
M(Q_1)=\left(
         \begin{array}{cc}
           (p-q)\sqrt{\frac{2}{q+1}} & 0 \\[1mm]
           -1 & -(q+1)\sqrt{\frac{2}{q+1}} \\
         \end{array}
       \right),
$$
and
$$
M(Q_2)=\left(
         \begin{array}{cc}
           -(p-q)\sqrt{\frac{2}{q+1}} & 0 \\[1mm]
           -1 & (q+1)\sqrt{\frac{2}{q+1}} \\
         \end{array}
       \right),
$$
thus both $Q_1$ and $Q_2$ are saddle points. The stable manifold theorem implies that there is a unique trajectory contained in the unstable manifold of $Q_1$ going out into the first quadrant of the $(U,V)$-plane and a unique trajectory in the stable manifold of $Q_1$ which is contained in the axis $U=0$, due to its invariance for the system \eqref{a3}. Analogously, there is a unique trajectory in the unstable manifold of $Q_2$ which is contained in the invariant axis $U=0$, and a unique trajectory in the stable manifold of $Q_2$ coming from the quadrant $(0,\infty)\times(-\infty,0)$ of the $(U,V)$-plane. The local behavior of the traveling wave profiles corresponding to the trajectory contained in the unstable manifold of $Q_1$ is given by $V(\eta)\to\sqrt{2/(q+1)}$ as $\eta\to-\infty$ or, equivalently, undoing the transformation,
$$
\lim\limits_{\xi\to-\infty}\bigg(YX^{-(q+1)/2}\bigg)(\xi)=\sqrt{\frac{2}{q+1}},
$$
which also reads, after undoing the change of variable \eqref{PSchange},
\begin{equation}\label{a5}
\lim\limits_{\xi\to-\infty}\left(f^{(1-q)/2}\right)'(\xi)=-\frac{q-1}{2}\sqrt{\frac{2}{q+1}}.
\end{equation}
In a similar way, the local behavior of the traveling wave profiles corresponding to the trajectory contained in the stable manifold of $Q_2$ is given by $V(\eta)\to-\sqrt{2/(q+1)}$ as $\eta\to\infty$ or, equivalently, undoing the transformation,
$$
\lim\limits_{\xi\to\infty}\bigg(YX^{-(q+1)/2}\bigg)(\xi)=-\sqrt{\frac{2}{q+1}},
$$
which also reads, after undoing the change of variable \eqref{PSchange},
\begin{equation}\label{a6}
\lim\limits_{\xi\to\infty}\left(f^{(1-q)/2}\right)'(\xi)=\frac{q-1}{2}\sqrt{\frac{2}{q+1}}.
\end{equation}
A simple argument based on the L'Hopital's rule shows that \eqref{a5} and \eqref{a6} imply the local behavior \eqref{beh.P10case1}, as claimed. A completely similar argument is employed when the opposite inequality in \eqref{a4} holds true and thus $\mu=n-1+(1-q)/2$, leading to the same saddle points $Q_1$ and $Q_2$ as above (we omit the details).

\medskip

\noindent \textbf{Case 2: $n=(q+1)/2$.} In this case, we follow similar ideas as in Case 1 by introducing the change of (both dependent and independent) variables
\begin{equation}\label{a12}
U=X^{p-q}, \quad V=X^{-n}Y, \quad \frac{d\eta}{d\xi}=X^{n-1},
\end{equation}
in order to transform the system \eqref{a1} into the following system in the $(U,V)$-variables (where derivatives are taken with respect to $\eta$)
\begin{equation}\label{a13}
\left\{\begin{array}{ll}U'=(p-q)UV, \\V'=-nV^2-knV-U+1.\end{array}\right.
\end{equation}
Since $p>q$, $X=0$ implies $U=0$, and the system \eqref{a13} has two critical points with $U=0$, namely
$$
Q_1=(0,v_1), \quad Q_2=(0,v_2),
$$
where $v_1$ and $v_2$ are defined in \eqref{V12}. The linearization of the system \eqref{a13} in a neighborhood of the critical points $Q_1$ and $Q_2$ has the matrix
$$
M(Q_1)=\left(
         \begin{array}{cc}
           (p-q)v_1 & 0 \\
           -1 & -\sqrt{k^2n^2+4n} \\
         \end{array}
       \right), \quad
M(Q_2)=\left(
         \begin{array}{cc}
           (p-q)v_2 & 0 \\
           -1 & \sqrt{k^2n^2+4n} \\
         \end{array}
       \right).
$$
Since $v_1>0$ and $v_2<0$, we find that $Q_1$ and $Q_2$ are saddle points and, similarly as in Case 1, we are interested only in the unique trajectory contained in the unstable manifold of $Q_1$ and the unique trajectory contained in the stable manifold of $Q_2$. On these trajectories, the local behavior is given by $V(\eta)\to v_1$ as $\eta\to-\infty$, respectively $V(\eta)\to v_2$ as $\eta\to\infty$. By undoing first the change of variable \eqref{a12} and then \eqref{PSchange}, we infer that
\begin{equation}\label{a14}
\lim\limits_{\xi\to-\infty}(f^{1-n})'(\xi)=-(n-1)v_1, \quad {\rm respectively,} \quad \lim\limits_{\xi\to\infty}(f^{1-n})'(\xi)=-(n-1)v_2.
\end{equation}
Starting from \eqref{a14}, a simple argument based on the L'Hopital's rule ensures that \eqref{beh.P10case2} holds true, as stated.
\medskip

\noindent \textbf{Case 3: $n<(1+q)/2$.} In this case, we follow similar ideas as in Case 1 by introducing the change of (both dependent and independent) variables
\begin{equation}\label{a9}
U=X^{q+1-2n}, \quad V=X^{-n}Y, \quad \frac{d\eta}{d\xi}=X^{n-1},
\end{equation}
noting that the assumption $n<(1+q)/2$ gives that $U=0$ when $X=0$. By straightforward calculations, the system \eqref{a1} is mapped into the system (where derivatives are with respect to $\eta$)
\begin{equation}\label{a7}
\left\{\begin{array}{ll}U'=(q+1-2n)UV, \\V'=-nV^2-knV+U-U^{(p+1-2n)/(q+1-2n)}.\end{array}\right.
\end{equation}
We again analyze the critical points with $U=0$, which are
$$
Q_1=(0,0), \quad Q_2=(0,-k).
$$
The linearization of the system in a neighborhood of $Q_1=(0,0)$, respectively $Q_2=(0,-k)$, has the matrix
$$
M(Q_1)=\left(
         \begin{array}{cc}
           0 & 0 \\
           1 & -kn \\
         \end{array}
       \right), \quad
M(Q_2)=\left(
         \begin{array}{cc}
           -k(q+1-2n) & 0 \\
           1 & kn \\
         \end{array}
       \right).
$$
We notice that the critical point $Q_1$ is non-hyperbolic, with one-dimensional center manifolds and a unique (according to \cite[Theorem 3.2.1]{GH}) stable manifold contained in the invariant $U$-axis, while $Q_2$ is a saddle point with a unique stable manifold and a unique unstable manifold contained in the invariant $U$-axis. We are thus interested in the behavior of the profiles corresponding to the center manifold of $Q_1$, respectively the stable manifold of $Q_2$. For the former, an easy application (up to order one) of the center manifold theorem gives that any center manifold of $Q_1$ is approximated by
\begin{equation}\label{a8}
V=\frac{1}{kn}U+o(U),
\end{equation}
whence the flow on any center manifold is given by the reduced first equation
$$
U'=\frac{q+1-2n}{kn}U^2+o(U^2),
$$
hence it is unstable in a neighborhood of $Q_1$. Since the non-zero eigenvalue is negative, the uniqueness theory in \cite[Theorem 3.2 and 3.2']{Sij} applies to prove that the center manifold of $Q_1$ is unique and points into the first quadrant of the $(U,V)$-plane, tangent to the eigenvector $(kn,1)$. In order to deduce the local behavior, we start from \eqref{a8} and undo first the change of variable \eqref{a9} and then \eqref{PSchange} to deduce that
\begin{equation}\label{a10}
\lim\limits_{\xi\to-\infty}\left(f^{n-q}\right)'(\xi)=-\frac{q-n}{kn}.
\end{equation}
For the critical point $Q_2$, we obtain that $V(\eta)\to-k$ as $\eta\to\infty$, that is, $(YX^{-n})(\xi)\to-k$ as $\xi\to\infty$ on the trajectory contained in its stable manifold. By undoing the change of variable \eqref{PSchange}, we are left with
\begin{equation}\label{a11}
\lim\limits_{\xi\to\infty}(f^{1-n})'(\xi)=k(n-1).
\end{equation}
Starting from \eqref{a10} and \eqref{a11}, a simple argument based on the L'Hopital's rule ensures that \eqref{beh.P10case3} holds true, as desired.

\medskip

\noindent \textbf{Uniqueness of the trajectories.} We are only left to prove that, when performing the change from $(X,Y)$ to $(U,V)$ in all the previous cases, we did not miss trajectories connecting to the critical point $P_1=(0,0)$ in the $(X,Y)$-plane but not connecting to one of the resulting points $Q_1$ and $Q_2$ (in some of the three cases) in the $(U,V)$-plane. Since $X=0$ implies $U=0$ in all cases, and there are (in all the three cases under consideration) no other finite critical points of the form $(0,V)$ in the $(U,V)$-plane than the already analyzed $Q_1$ and $Q_2$, the only possibility for such a ``missed" trajectory is to be mapped into a trajectory connecting to a critical point at infinity in the $(U,V)$-plane, more precisely, with $U(\eta)\to0$ and $V(\eta)\to\pm\infty$ as either $\eta\to\infty$ or $\eta\to-\infty$. Assume for contradiction that there exists such a trajectory. Since, in a right neighborhood of $U=0$, in any of the systems \eqref{a3}, \eqref{a7} and \eqref{a13}, the variable $U$ is monotone when $|V|$ is very large, the inverse function theorem gives that this trajectory can be written (at least on some interval $U\in(0,\delta)$) as the graph of a function $V=g(U)$, with derivative
$$
g'(U)=\frac{dV}{dU}.
$$
Let us consider, just as an example, that the trajectory exists in the case $n<(1+q)/2$, that is, in the system \eqref{a7}. We then have, for $U\in(0,\delta)$,
$$
g'(U)=\frac{-ng(U)^2-kng(U)+U-U^{(p+1-2n)/(q+1-2n)}}{(q+1-2n)Ug(U)}.
$$
Letting $U\to0$ and recalling that $g(U)\to\infty$ by assumption, we infer that
$$
g'(U)\sim-\frac{n}{q+1-2n}\frac{g(U)}{U}, \quad {\rm as} \ U\to0.
$$
But the latter equation implies a linear behavior in the first order approximation as $U\to0$, which is a contradiction with the existence of the vertical asymptote. A similar contradiction is obtained also in the systems \eqref{a13} and \eqref{a3}, completing the proof.
\end{proof}

\bigskip

\noindent \textbf{Acknowledgements} This work is partially supported by the Spanish project PID2024-160967NB-I00.

\bigskip

\noindent \textbf{Data availability} Our manuscript has no associated data.

\bigskip

\noindent \textbf{Conflict of interest} The authors declare that there is no conflict of interest.

\bibliographystyle{plain}

\begin{thebibliography}{1}

\bibitem{AW75}
D. G. Aronson and H. F. Weinberger, \emph{Nonlinear diffusion in population genetics, combustion and nerve pulse propagation}, Partial Differential Equations and Related Topics, Lecture Notes in Mathematics, vol. 446, Springer-Verlag, Berlin, 1975.

\bibitem{Ba94}
R. B. Banks, \emph{Growth and Diffusion Phenomena}, Springer-Verlag, Berlin, 1994.

\bibitem{Biro02}
Z. Biro, \emph{Stability of travelling waves for degenerate reaction-diffusion equations of KPP type}, Adv. Nonlinear Stud., \textbf{2} (2002), no. 4, 357--371.

\bibitem{Carr}
J. Carr, \emph{Applications of Centre Manifold Theory}, Springer Verlag, New York, 1981.

\bibitem{DK12}
J. I. Diaz and S. Kamin, \emph{Convergence to traveling waves for quasilinear Fisher-KPP equations}, J. Math. Anal. Appl., \textbf{390} (2012), no. 1, 74--85.

\bibitem{DGQ20}
Y. Du, A. G\'arriz and F. Quir\'os, \emph{Travelling-wave behaviour in doubly nonlinear reaction-diffusion equations}, Preprint ArXiv no. 2009.12959, 2020.

\bibitem{DQZ20}
Y. Du, F. Quir\'os and M. Zhou, \emph{Logarithmic corrections in Fisher-KPP type porous medium equations}, J. Math. Pures Appl., \textbf{136} (2020), 415--455.

\bibitem{Duff53}
G. F. D. Duff, \emph{Limit cycles and rotated vector fields}, Ann. of Math., \textbf{67} (1953), 15--31.

\bibitem{Fi37}
R. A. Fisher, \emph{The wave of advance of advantageous genes}, Ann. Eugen., \textbf{7} (1937), 355--369.

\bibitem{G20}
A. G\'arriz, \emph{Propagation of solutions of the porous medium equation with reaction and their traveling wave behavior}, Nonlinear Anal., \textbf{195} (2020), Article ID 111736, 23 p.

\bibitem{GK}
B. H. Gilding and R. Kersner, \emph{Traveling waves in nonlinear diffusion-convection-reaction}, Progress in Nonlinear Differential Equations and their Applications, Birkhauser Verlag, Basel, 2004.

\bibitem{GH}
J. Guckenheimer and Ph. Holmes, \emph{Nonlinear oscillation, dynamical systems and bifurcations of vector fields}, Applied Mathematical Sciences, vol. 42, Springer-Verlag, New York, 1990.

\bibitem{Han92}
M. Han, \emph{A note on the non-existence of the limit cycles}, Chin. Sci. Bull., \textbf{37} (1992), no. 23, 1950--1953.

\bibitem{Han99}
M. Han, \emph{Global behavior of limit cycles in rotated vector fields}, J. Differential Equations, \textbf{151} (1999), no. 1, 20--35.

\bibitem{HBS14}
B. Hern\'andez-Bermejo and A. S\'anchez, \emph{Transition between extinction and blow-up in a generalized Fisher-KPP model}, Phys. Letters A, \textbf{378} (2014), 1711--1716.

\bibitem{ILS25}
R. G. Iagar, Ph. Lauren\ced{c}ot and A. S\'anchez, \emph{Eternal solutions to a porous medium equation with strong nonhomogeneous absorption. Part II: Dead-core profiles}, Analysis and Applications (online ready 2025), DOI https://doi.org/10.1142/S021953052550023X, 43p.

\bibitem{IS25}
R. G. Iagar and A. S\'anchez, \emph{Qualitative properties of solutions to a generalized Fisher-KPP equation}, Electron. J. Qual. Theory Differ. Equ., \textbf{2025} (2025), Paper No. 8, 19 p.

\bibitem{KPP37}
A. Kolmogorov, I. Petrovsky and N. Piscounoff, \emph{\'Etude de l'\'equation de la diffusion avec croissance de la quantit\'e de mati\`ere et son application \`a un probl\'eme biologique}, Bull. Mosk. Univ. Ser. Int. Sec. Math., \textbf{1} (1937), 1--25.

\bibitem{LH24}
W. Liu and M. Han, \emph{Bifurcations of traveling wave solutions of a generalized Burgers-Fisher equation}, J. Math. Anal. Appl., \textbf{533} (2024), no. 2, Paper no. 128012, 23p.

\bibitem{MaOu21}
M. Ma and C. Ou, \emph{The minimal wave speed of a general reaction-diffusion equation with nonlinear advection}, Z. Angew. Math. Phys., \textbf{72} (2021), no. 4, Paper No. 163, 14p.

\bibitem{MM02}
L. Malaguti and C. Marcelli, \emph{Travelling wavefronts in reaction-diffusion equations with convection effects and non-regular terms}, Math. Nachr., \textbf{242} (2002), 148--164.

\bibitem{MM03}
L. Malaguti and C. Marcelli, \emph{Sharp profiles in degenerate and doubly degenerate Fisher-KPP equations}, J. Differential Equations, \textbf{195} (2003), no. 2, 471--496.

\bibitem{Ma10}
M. B. A. Mansour, \emph{Traveling wave patterns in nonlinear reaction-diffusion equations}, J. Math. Chem., \textbf{48} (2010), 558--565.

\bibitem{PS25}
K. Patra and Ch. S. Rao, \emph{Existence of periodic traveling wave solutions of a family of generalized Burgers-Fisher equations}, J. Math. Anal. Appl., \textbf{547} (2025), Paper no. 129263, 28p.

\bibitem{Pe75}
L. M. Perko, \emph{Rotated vector fields and the global behavior of limit cycles for a class of quadratic systems in the plane}, J. Differential Equations, \textbf{18} (1975), no. 1, 63--86.

\bibitem{Pe93}
L. M. Perko, \emph{Rotated vector fields}, J. Differential Equations, \textbf{103} (1993), no. 1, 127--145.

\bibitem{Pe}
L. Perko, \emph{Differential equations and dynamical systems. Third edition}, Texts in Applied Mathematics, \textbf{7}, Springer Verlag, New York, 2001.

\bibitem{PPS25}
N. Popovic, M. Ptashnyk and Z. Sattar, \emph{The Burgers-FKPP advection-reaction-diffusion equation with cut-off}, J. Dyn. Differ. Equations (online first), DOI https://doi.org/10.1007/s10884-025-10458-y, 27p.

\bibitem{SHB05}
A. S\'anchez and B. Hern\'andez-Bermejo, \emph{New traveling wave solutions for the Fisher-KPP equation with general exponents}, Appl. Math. Lett., \textbf{18} (2005), 1281--1285.

\bibitem{SGM94}
F. S\'anchez-Gardu\~{n}o and P. K. Maini, \emph{Existence and uniqueness of a sharp travelling wave in degenerate non-linear diffusion Fisher-KPP equations}, J. Math. Biol., \textbf{33} (1994), 163--192.

\bibitem{Sat76}
D. H. Sattinger, \emph{On the stability of waves of nonlinear parabolic systems}, Advances in Math., \textbf{22} (1976), no. 3, 312--355.

\bibitem{Sij}
J. Sijbrand, \emph{Properties of center manifolds}, Trans. Amer. Math. Soc., \textbf{289} (1985), no. 2, 431--469.

\bibitem{Zhang21}
H. Zhang, Y. Xia and P.-R. Ngbo, \emph{Global existence and uniqueness of a periodic wave solution of the generalized Burgers-Fisher equation}, Appl. Math. Lett., \textbf{121} (2021), Paper no. 107353, 7p.

\bibitem{Zhang22}
H. Zhang and Y. Xia, \emph{Periodic wave solution of the generalized Burgers-Fisher equation via abelian integral}, Qual. Theory Dyn. Syst., \textbf{21} (2022), no. 3, Paper no. 64, 13p.

\end{thebibliography}

\end{document}